\newtheorem{proposition}{Proposition}[section]
\newtheorem{theorem}{Theorem}[section]
\newtheorem{corollary}{Corollary}[theorem]
\theoremstyle{definition}
\newtheorem{definition}{Definition}[section]
\newtheorem{remark}{Remark}[section]
\newtheorem{example}{Example}[section]
\begin{document}

\title[The second \(p\)-class group]
{The second \(p\)-class group of a number field}

\author{Daniel C. Mayer}
\address{Naglergasse 53\\8010 Graz\\Austria}
\email{algebraic.number.theory@algebra.at}
\urladdr{http://www.algebra.at}
\thanks{Research supported by the
Austrian Science Fund,
Grant Nr. J0497-PHY}

\subjclass[2000]{Primary 11R29,11R11,11R16,11R20; Secondary 20D15,20F12,20F14}
\keywords{\(p\)-class groups, principalisation, \(p\)-class fields, quadratic fields, cubic fields,
dihedral fields, metabelian \(p\)-groups, commutator calculus, central series, coclass graphs}

\date{December 24, 2010}

\dedicatory{Dedicated to the memory of Arnold Scholz}


\begin{abstract}
For a prime \(p\ge 2\) and
a number field \(K\) with \(p\)-class group of type \((p,p)\)
it is shown that
the class, coclass, and further invariants of the metabelian Galois group
\(G=\mathrm{Gal}(\mathrm{F}_p^2(K)\vert K)\)
of the second Hilbert \(p\)-class field \(\mathrm{F}_p^2(K)\) of \(K\)
are determined by
the \(p\)-class numbers of the unramified cyclic extensions
\(N_i\vert K\), \(1\le i\le p+1\), of relative degree \(p\).
In the case of a quadratic field \(K=\mathbb{Q}(\sqrt{D})\)
and an odd prime \(p\ge 3\),
the invariants of \(G\) are derived from
the \(p\)-class numbers
of the non-Galois subfields \(L_i\vert\mathbb{Q}\) of absolute degree \(p\)
of the dihedral fields \(N_i\).
As an application, the structure of the automorphism group
\(G=\mathrm{Gal}(\mathrm{F}_3^2(K)\vert K)\)
of the second Hilbert \(3\)-class field \(\mathrm{F}_3^2(K)\) is analysed
for all quadratic fields \(K\) with discriminant \(-10^6<D<10^7\)
and \(3\)-class group of type \((3,3)\)
by computing their principalisation types.
The distribution of these metabelian \(3\)-groups \(G\)
on the coclass graphs \(\mathcal{G}(3,r)\), \(1\le r\le 6\),
in the sense of Eick and Leedham-Green is investigated.
\end{abstract}

\maketitle


\section{Introduction}
\label{s:Intro}

For an algebraic number field \(K\) and
a prime \(p\ge 2\) we denote by
\(\mathrm{Cl}_p(K)\)
the \(p\)-class group of \(K\),
that is the Sylow \(p\)-subgroup
\(\mathrm{Syl}_p\mathrm{Cl}(K)\)
of its ideal class group.
In this paper we shall be concerned with
number fields having an
elementary abelian bicyclic
\(p\)-class group of type \((p,p)\).
We define the \(p\)-class field tower of \(K\),
\(K<\mathrm{F}_p^1(K)\le\mathrm{F}_p^2(K)\le\ldots\le\mathrm{F}_p^n(K)\le\ldots\),
recursively by \(\mathrm{F}_p^0(K)=K\) and
\(\mathrm{F}_p^n(K)=\mathrm{F}_p^1\left(\mathrm{F}_p^{n-1}(K)\right)\)
for \(n\ge 1\), where each successor is the
Hilbert \(p\)-class field of its predecessor,
that is the maximal abelian unramified extension
with a power of \(p\) as relative degree.
The \(n\)th Hilbert \(p\)-class field \(\mathrm{F}_p^n(K)\) of \(K\)
is an unramified Galois extension of \(K\)
for each \(n\ge 1\) (cfr. Hasse \cite[p.164, \S 27]{Ha2}).
It is non-abelian for \(n\ge 2\),
except in the degenerate case \(\mathrm{F}_p^2(K)=\mathrm{F}_p^1(K)\)
of a single-stage tower.

According to the reciprocity law of Artin \cite{Ar1,Ar2},
the Galois group of the first Hilbert \(p\)-class field of \(K\),
\(\mathrm{Gal}(\mathrm{F}_p^1(K)\vert K)\simeq\mathrm{Cl}_p(K)\),
is isomorphic to the \(p\)-class group of \(K\) (cfr. Miyake \cite{My}).
Therefore the automorphism group
\(G=\mathrm{Gal}(\mathrm{F}_p^n(K)\vert K)\)
of the \(n\)-stage tower
is called the \(n\)th \(p\)-class group of \(K\).
Since \(\mathrm{F}_p^1(K)\) is maximal
among all abelian unramified \(p\)-extensions of \(K\),
the commutator subgroup of \(G\) is given by
\(\gamma_2(G)=[G,G]=G^\prime
=\mathrm{Gal}(\mathrm{F}_p^n(K)\vert\mathrm{F}_p^1(K))\)
and the abelianisation
\(G/\gamma_2(G)\simeq\mathrm{Gal}(\mathrm{F}_p^1(K)\vert K)\simeq\mathrm{Cl}_p(K)\)
is isomorphic to the \(p\)-class group of \(K\).

The aim of the present paper is to investigate the
\textit{second \(p\)-class group} \(G=\mathrm{Gal}(\mathrm{F}_p^2(K)\vert K)\),
that is the Galois group of the \textit{two-stage tower},
\(K<\mathrm{F}_p^1(K)\le\mathrm{F}_p^2(K)\),
of a base field \(K\) with \(p\)-class group of type \((p,p)\).
The second \(p\)-class group \(G\) is distinguished by the special property that its commutator subgroup
\(\gamma_2(G)=\mathrm{Gal}(\mathrm{F}_p^2(K)\vert\mathrm{F}_p^1(K))\simeq\mathrm{Cl}_p(\mathrm{F}_p^1(K))\)
is isomorphic to the \(p\)-class group of the first Hilbert \(p\)-class field of \(K\),
whence \(G\) is metabelian.

The theory of second \(p\)-class groups was initiated by Scholz and Taussky \cite{So,SoTa},
using Schreier's concept of group extensions \cite{Sr1,Sr2}.
It was continued by Heider and Schmithals \cite{HeSm}
with the aid of Hall's isoclinism families \cite{Hl} and presentations by Blackburn \cite{Bl}.
Here we shall connect it with the theory of coclass graphs by Newman and Leedham-Green \cite{LgNm},
which has been developed by Ascione \cite{AHL,As}, Nebelung \cite{Ne}, O'Brien \cite{NmOb}, McKay \cite{LgMk},
Eick, et al. \cite{EkLg,DEF,EkFs}.

After the definition of three fundamental isomorphism invariants \(s,e,k\) of metabelian \(p\)-groups
in section \ref{s:IsoInv}, we prove in section \ref{s:Dir} that
the class, coclass, and further invariants of the second \(p\)-class group \(G\) of \(K\)
determine the \(p\)-class numbers \(\mathrm{h}_p(N_i)\)
of the \(p+1\) unramified cyclic extensions \(N_i\) of \(K\) of relative degree \(p\),
because the \(p\)-class group \(\mathrm{Cl}_p(N_i)\)
is isomorphic to the commutator factor group of
the maximal normal subgroup \(M_i=\mathrm{Gal}(\mathrm{F}_p^2(K)\vert N_i)\)
of \(G\), for each \(1\le i\le p+1\), by Artin's reciprocity law.
The investigation is separated into two parts.
In subsection \ref{ss:Max} we treat the metabelian \(p\)-groups \(G\) of coclass \(\mathrm{cc}(G)=1\),
for which generators and relations have been given for an arbitrary prime \(p\ge 2\)
by Blackburn \cite{Bl} and more generally by Miech \cite{Mi}.
Subsection \ref{ss:Low} deals with metabelian \(3\)-groups \(G\) of coclass \(\mathrm{cc}(G)\ge 2\)
with abelianisation \(G/\gamma_2(G)\) of type \((3,3)\),
whose presentations are due to Nebelung \cite{Ne}.

In section \ref{s:Qdr} we use the theory of dihedral field extensions,
in particular some well-known class number relations by Scholz and Moser \cite{So,Mo},
to show that in the case of a quadratic base field \(K=\mathbb{Q}(\sqrt{D})\)
and an odd prime \(p\ge 3\)
the invariants of the second \(p\)-class group \(G\) determine
the \(p\)-class numbers \(\mathrm{h}_p(N_i)\) of the dihedral fields \(N_i\)
and the \(p\)-class numbers \(\mathrm{h}_p(L_i)\)
of the non-Galois subfields \(L_i\) of absolute degree \(p\) of the \(N_i\).
In contrast to \cite{Ma2}, where we have solved the multiplicity problem for discriminants
of dihedral fields which are ramified with conductor \(f>1\) over their quadratic subfield,
we are now concerned exclusively with unramified extensions having \(f=1\).
For a real quadratic base field \(K\),
the cohomology
\(\mathrm{H}^0(\mathrm{Gal}(N_i\vert K),U(N_i))\)
of the unit groups \(U(N_i)\)
with respect to the cyclic Galois groups \(\mathrm{Gal}(N_i\vert K)\)
must be considered
to distinguish between partial and total principalisation of \(K\) in the \(N_i\).

In the sections
\ref{s:InvThmCub}, \ref{s:InvThmQnt}, and \ref{s:InvThmQdr}
we develop new algorithmic methods to compute, conversely,
the class \(\mathrm{cl}(G)\), coclass \(\mathrm{cc}(G)\),
and the invariants \(s,e,k\) of the second \(p\)-class group \(G\)
from the \(p\)-class numbers \(\mathrm{h}_p(L_i)\)
or \(\mathrm{h}_p(N_i)\), for \(p=3\), \(p\ge 5\), and \(p=2\).

With the aid of these new methods
we have determined
the structure of the second \(3\)-class group
\(\mathrm{Gal}(\mathrm{F}_3^2(K)\vert K)\)
of the \(4\,596\) quadratic number fields \(K\) with discriminant \(-10^6<D<10^7\)
and \(3\)-class group of type \((3,3)\)
from the \(3\)-class numbers of the absolute cubic fields \(L_i\)
and the principalisation types \cite[3.3]{Ma3} of \(K\) in the relative cubic fields \(N_i\).
The results of these extensive computations,
which reveal sound statistical insight for the first time,
are presented in section \ref{s:NumRstCub}.
The distribution of the occurring metabelian \(3\)-groups
on the coclass graphs \(\mathcal{G}(3,r)\) with \(1\le r\le 6\)
in the sense of Eick, Leedham-Green, et al. \cite{EkLg,DEF,EkFs}
is investigated here and in the related paper \cite{Ma4}.

In section \ref{s:NumRstQdr} we present examples of second \(2\)-class groups
\(\mathrm{Gal}(\mathrm{F}_2^2(K)\vert K)\)
of increasing order,
corresponding to excited states of three principalisation types,
for complex quadratic fields \(K\) with
\(2\)-class group of type \((2,2)\),
extending the results for the ground states by H. Kisilevsky \cite{Ki}.


\section{The invariants \(s,e,k\) of metabelian \(p\)-groups}
\label{s:IsoInv}

To be able to express the main theorems concerning the \(p\)-class numbers
of the unramified cyclic extensions \(N_i\) of relative degree \(p\)
of a base field \(K\) with \(p\)-class group of type \((p,p)\),
we have to recall some fundamental concepts with respect to the metabelian
second \(p\)-class group \(G\) of \(K\).

Since the commutator factor group \(G/\gamma_2(G)\) of \(G\) is of type \((p,p)\),
the subgroup \(G^p\) of \(G\) generated by the \(p\)th powers
is contained in the commutator group \(\gamma_2(G)\),
which therefore coincides with the Frattini subgroup
\(\Phi(G)=\cap_{i=1}^{p+1}\,M_i=G^p\gamma_2(G)=\gamma_2(G)\).
According to the basis theorem of Burnside \cite[p.29, Th.1.12]{Bv},
the group \(G=\langle x,y\rangle\)
can thus be generated by two elements.

Like any finite \(p\)-group, \(G\) is nilpotent.
If we declare the lower central series of \(G\)
recursively by \(\gamma_1(G)=G\) and
 \(\gamma_j(G)=\lbrack\gamma_{j-1}(G),G\rbrack\) for \(j\ge 2\),
then we have Kaloujnine's commutator relations
\(\lbrack\gamma_j(G),\gamma_\ell(G)\rbrack\le\gamma_{j+\ell}(G)\) for \(j,\ell\ge 1\)
\cite[p.47, Cor.2]{Bl}
and for a certain index of nilpotence \(m\ge 2\) the series
\(\gamma_1(G)>\gamma_2(G)>\ldots>\gamma_{m-1}(G)>\gamma_m(G)=1\) becomes stationary.
The number of non-trivial factors \(\gamma_j(G)/\gamma_{j+1}(G)\)
is called the class \(\mathrm{cl}(G)=m-1\) of \(G\).

\(G\) is of maximal class
or a CF-group (cyclic factors), if and only if
\(G\) has the order \(\lvert G\rvert=p^n\)
with the index of nilpotence \(n=m\) as exponent,
that is, if all factors
\(\gamma_j(G)/\gamma_{j+1}(G)\) with \(2\le j\le m-1\)
are cyclic of order \(p\).
In this case, \(G\) is of coclass \(\mathrm{cc}(G)=n-\mathrm{cl}(G)=n-m+1=1\).

The centralisers
\(\chi_j(G)=\lbrace g\in G\mid\lbrack g,u\rbrack\in\gamma_{j+2}(G)\text{ for all }u\in\gamma_j(G)\rbrace\),
\(2\le j\le m-1\),
of two-step factor groups \(\gamma_j(G)/\gamma_{j+2}(G)\)
of the lower central series \cite[p.54, Lem.2.5]{Bl},
\(\chi_j(G)/\gamma_{j+2}(G)
=\mathrm{centraliser}_{G/\gamma_{j+2}(G)}(\gamma_j(G)/\gamma_{j+2}(G)),\)
that is, the biggest subgroups of \(G\) with
\(\lbrack\chi_j(G),\gamma_j(G)\rbrack\le\gamma_{j+2}(G)\),
form an ascending chain of characteristic subgroups of \(G\),
\(\gamma_2(G)\le\chi_2(G)\le\ldots\le\chi_{m-2}(G)<\chi_{m-1}(G)=G,\)
which contain the commutator group \(\gamma_2(G)\).
\(\chi_j(G)\) coincides with \(G\), if and only if \(j\ge m-1\).

With an invariant \(s=s(G)\) we characterise
the first two-step centraliser,
which is strictly bigger than the commutator group \cite{Ne}.

\begin{definition}
\label{d:InvS}
Let \(2\le s\le m-1\) be defined by
\(s=\min\lbrace 2\le j\le m-1\mid\chi_j(G)>\gamma_2(G)\rbrace.\)
\end{definition}

The group \(\chi_s(G)\) coincides
either with one of the maximal subgroups \(M_i\)
or with \(G\).
In the first case, that is for \(s\le m-2\), we have
\(\gamma_2(G)=\chi_2(G)=\ldots=\chi_{s-1}(G)<\chi_s(G)=\ldots=\chi_{m-2}(G)<\chi_{m-1}(G)=G\).
The smallest value \(s=2\) occurs, if and only if \(G\) is of coclass \(\mathrm{cc}(G)=1\).

With a further invariant \(e=e(G)\) it will be expressed,
which factor \(\gamma_j(G)/\gamma_{j+1}(G)\) of the lower central series
is cyclic for the first time \cite{Ne}.

\begin{definition}
\label{d:InvE}
Let \(2\le e\le m-1\) with
\(e+1=\min\lbrace 3\le j\le m\mid 1\le\lvert\gamma_j(G)/\gamma_{j+1}(G)\rvert\le p\rbrace.\)
\end{definition}

In this definition we exclude the factor
\(\gamma_2(G)/\gamma_3(G)\), which is always cyclic.
The value \(e=2\) is characteristic for a group \(G\) of coclass \(\mathrm{cc}(G)=1\).
For \(p=3\) we have \(e=n-m+2=\mathrm{cc}(G)+1\) and \(e=s\),
except if \(e=m-2<s=m-1\) in the case of a cyclic center \(\zeta_1(G)\).
For \(e\ge 3\), that is for \(G\) of coclass \(\mathrm{cc}(G)\ge 2\), we can also define
\(e=\max\lbrace 3\le j\le m-1\mid\lvert\gamma_j(G)/\gamma_{j+1}(G)\rvert>p\rbrace\).
Then \(\gamma_e(G)/\gamma_{e+1}(G)\)
must be at least of \(p\)-rank two,
by \cite[p.49, Th.1.5]{Bl}.

Finally, the invariant \(k=k(G)\) is a measure for the deviation from the
greatest possible commutativity of the groups \(\chi_s(G)\) and \(\gamma_e(G)\)
\cite{Mi}.

\begin{definition}
\label{d:InvK}
Let \(0\le k\le m-e-1\) be defined by
\(\lbrack\chi_s(G),\gamma_e(G)\rbrack=\gamma_{m-k}(G).\)
\end{definition}


\section{Class numbers of the unramified cyclic extensions of relative degree \(p\)}
\label{s:Dir}


\subsection{Metabelian \(p\)-groups \(G\) of coclass \(\mathrm{cc}(G)=1\)}
\label{ss:Max}

We begin with a purely group theoretic statement
concerning the order of the commutator factor groups \(M_i/\gamma_2(M_i)\)
of the maximal normal subgroups \(M_i\)
of a metabelian \(p\)-group \(G\) of maximal class.
As a special case,
the abelian \(p\)-group \(G\) of type \((p,p)\) is included.

\begin{theorem}
\label{t:MaxCfg}

With a prime \(p\ge 2\),
let \(G\) be a \(p\)-group
of order \(\lvert G\rvert=p^m\) and
class \(\mathrm{cl}(G)=m-1\), where \(m\ge 2\).
Suppose that the commutator group \(\gamma_2(G)\) is abelian and
the commutator factor group \(G/\gamma_2(G)\) is of type \((p,p)\).
Let generators of \(G=\langle x,y\rangle\) be selected such that
\(x\in G\setminus\chi_2(G)\), if \(m\ge 4\), and \(y\in\chi_2(G)\setminus\gamma_2(G)\).
Assume that the order of the maximal normal subgroups
\(M_i=\langle g_i,\gamma_2(G)\rangle\)
is defined by \(g_1=y\) and \(g_i=xy^{i-2}\) for \(2\le i\le p+1\).
Finally, let the invariant \(k\) of \(G\) be declared by
\(\lbrack\chi_2(G),\gamma_2(G)\rbrack=\gamma_{m-k}(G)\),
where \(k=0\) for \(m\le 3\), \(0\le k\le m-4\) for \(m\ge 4\),
and \(0\le k\le\min\lbrace m-4,p-2\rbrace\) for \(m\ge p+1\) \cite{Mi}.

Then the order of the commutator factor groups of \(M_1,\ldots,M_{p+1}\) is given by

\begin{enumerate}
\item
\(\lvert M_i/\gamma_2(M_i)\rvert=p\) for \(1\le i\le p+1\), if \(m=2\),
\item
\(\lvert M_i/\gamma_2(M_i)\rvert=p^2\) for \(2\le i\le p+1\), if \(m\ge 3\),
\item
\(\lvert M_1/\gamma_2(M_1)\rvert=p^{m-k-1}\), if \(m\ge 3\).
\end{enumerate}

\end{theorem}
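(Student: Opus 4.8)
The plan is to compute each commutator factor group $M_i/\gamma_2(M_i)$ directly from commutator calculus in the metabelian group $G$, using the structure of the lower central series and the relations governing $\chi_2(G)$. First I would dispose of the small cases: for $m=2$ the group $G$ is abelian of type $(p,p)$, every $M_i$ is cyclic of order $p$ and $\gamma_2(M_i)=1$, giving (1); for $m=3$ one checks that $\gamma_2(G)$ is cyclic of order $p$ and central, so each $M_i$ is abelian of order $p^2$, which settles (2) and (3) with $k=0$ simultaneously. So the real content is $m\ge 4$.

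For $m\ge 4$, the key observation is that $M_i/\gamma_2(M_i)$ has order $p^{\lvert M_i\rvert}/\lvert\gamma_2(M_i)\rvert = p^{m-1}/\lvert\gamma_2(M_i)\rvert$, so everything reduces to computing $\gamma_2(M_i)=[M_i,M_i]$ for each $i$. Writing $M_i=\langle g_i,\gamma_2(G)\rangle$ with $\gamma_2(G)$ abelian, one has $[M_i,M_i]=[g_i,\gamma_2(G)]\cdot[\gamma_2(G),\gamma_2(G)] = [g_i,\gamma_2(G)]$ (the second factor being trivial by hypothesis). So I must identify the subgroup $[g_i,\gamma_2(G)]$ of $\gamma_2(G)$. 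For $2\le i\le p+1$ the generator $g_i=xy^{i-2}$ lies in $G\setminus\chi_2(G)$, since $x\notin\chi_2(G)$ and $y\in\chi_2(G)$; by the very definition of $\chi_2(G)$ as the centraliser of $\gamma_2(G)/\gamma_4(G)$ modulo $\gamma_4(G)$, an element outside $\chi_2(G)$ fails to commute with $\gamma_2(G)$ modulo $\gamma_4(G)$, and a short induction up the central series (using Kaloujnine's relations and the fact that all factors $\gamma_j(G)/\gamma_{j+1}(G)$ for $2\le j\le m-1$ are cyclic of order $p$, i.e. maximal class) shows that $[g_i,\gamma_2(G)]=\gamma_3(G)$ exactly. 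Hence $\lvert\gamma_2(M_i)\rvert=\lvert\gamma_3(G)\rvert=p^{m-3}$ and $\lvert M_i/\gamma_2(M_i)\rvert=p^{m-1-(m-3)}=p^2$, proving (2). For $i=1$ we have $g_1=y\in\chi_2(G)$, so by definition $[\chi_2(G),\gamma_2(G)]=\gamma_{m-k}(G)$ gives $[y,\gamma_2(G)]\le\gamma_{m-k}(G)$; the crux is to show equality, $[g_1,\gamma_2(G)]=\gamma_{m-k}(G)$, so that $\lvert\gamma_2(M_1)\rvert=\lvert\gamma_{m-k}(G)\rvert=p^{k}$ and $\lvert M_1/\gamma_2(M_1)\rvert=p^{m-1-k}$, which is (3).

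The main obstacle is precisely this last equality $[y,\gamma_2(G)]=\gamma_{m-k}(G)$ rather than mere containment. The inclusion $\subseteq$ is immediate from the definition of $k$; for $\supseteq$ I would argue that since $\gamma_2(G)=\langle y_2,\gamma_3(G)\rangle$ where $y_2=[y,x]$ generates $\gamma_2(G)$ modulo $\gamma_3(G)$, and $[\chi_2(G),\gamma_3(G)]\le\gamma_{m-k+1}(G)$ by one more step of the central series (this follows from $k$ being defined with $\gamma_2$, combined with $[\chi_2(G),\gamma_j(G)]\le\gamma_{j+(m-k)-2}(G)$ which one proves by induction on $j$), the commutator $[\chi_2(G),\gamma_2(G)]$ is generated modulo $\gamma_{m-k+1}(G)$ by $[y,y_2]$ together with $[x,y_2]$; but $x\notin\chi_2(G)$ contributes nothing to $\chi_2(G)$, so $[\chi_2(G),\gamma_2(G)]=[y,y_2]\cdot\gamma_{m-k+1}(G)$, and since the left side equals $\gamma_{m-k}(G)$ and the factor $\gamma_{m-k}(G)/\gamma_{m-k+1}(G)$ is cyclic of order $p$ (maximal class), $[y,y_2]$ must already generate it, forcing $[y,\gamma_2(G)]=\gamma_{m-k}(G)$. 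Throughout I would lean on Blackburn's structural results \cite{Bl} for metabelian maximal-class $p$-groups and Miech's presentations \cite{Mi} to justify that the relevant commutator values are nondegenerate in the stated ranges of $k$.
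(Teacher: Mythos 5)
Your overall strategy coincides with the paper's: since \(\gamma_2(G)\) is abelian and \(M_i=\langle g_i\rangle\gamma_2(G)\) has cyclic quotient, \(\gamma_2(M_i)=\lbrack\gamma_2(G),g_i\rbrack=\gamma_2(G)^{g_i-1}\) (this is Blackburn's Lemma 2.1, which the paper quotes), and your treatment of \(m\le 3\), your computation \(\gamma_2(M_i)=\gamma_3(G)\) for \(2\le i\le p+1\) from \(g_i\notin\chi_2(G)\), and your reduction of case (3) to the identity \(\lbrack y,\gamma_2(G)\rbrack=\gamma_{m-k}(G)\) all parallel the paper's proof. Deriving that \(\lbrack y,s_2\rbrack\) generates \(\gamma_{m-k}(G)\) modulo \(\gamma_{m-k+1}(G)\) directly from the definition of \(k\), rather than from Miech's explicit relation as the paper does, is a legitimate variant.

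The gap is the final word ``forcing''. From the fact that \(\lbrack y,\gamma_2(G)\rbrack\) is a subgroup of \(\gamma_{m-k}(G)\) whose image generates the cyclic top factor \(\gamma_{m-k}(G)/\gamma_{m-k+1}(G)\) you cannot conclude \(\lbrack y,\gamma_2(G)\rbrack=\gamma_{m-k}(G)\) for an arbitrary subgroup: \(\gamma_{m-k}(G)\) has order \(p^k\), and for \(k\ge 2\) (which Miech's bound permits once \(p\ge 5\)) a subgroup covering only the top layer could a priori be a proper supplement of \(\gamma_{m-k+1}(G)\). Two ways to close this: (a) note that \(\lbrack y,\gamma_2(G)\rbrack=\gamma_2(M_1)\) is characteristic in the normal subgroup \(M_1\), hence normal in \(G\), and a normal subgroup \(H\le\gamma_c(G)\) with \(H\gamma_{c+1}(G)=\gamma_c(G)\) must equal \(\gamma_c(G)\) by the usual supplementation argument down the lower central series; or (b) do what the paper does: invoke Miech's relation \(s_2^{y-1}=\prod_{\ell=1}^k s_{m-\ell}^{a(m-\ell)}\) with \(a(m-k)>0\), use that \(x-1\) and \(y-1\) commute as operators on the abelian group \(\gamma_2(G)\) to get \(\lbrack s_j,y\rbrack=\lbrack s_2,y\rbrack^{(x-1)^{j-2}}\), and show by descending induction that \(\lbrack s_2,y\rbrack,\ldots,\lbrack s_{k+1},y\rbrack\) generate all of \(\gamma_{m-k}(G)\). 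In fact your own observation that \(\chi_2(G)=\langle y\rangle\gamma_2(G)\) for \(m\ge 4\), combined with the abelianness of \(\gamma_2(G)\), gives \(\lbrack\chi_2(G),\gamma_2(G)\rbrack=\lbrack y,\gamma_2(G)\rbrack\) as subgroups and hence (3) in one line, without the detour through the top factor; and the mention of \(\lbrack x,y_2\rbrack\) as a potential generator of \(\lbrack\chi_2(G),\gamma_2(G)\rbrack\) is spurious, since \(x\notin\chi_2(G)\). For \(k\le 1\), hence for all \(p\in\lbrace 2,3\rbrace\), your argument as written is already complete.
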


\begin{proof}

We define the main commutator
\(s_2=\lbrack y,x\rbrack\in\gamma_2(G)\)
and the higher commutators
\(s_j=\lbrack s_{j-1},x\rbrack=s_{j-1}^{x-1}\in\gamma_j(G)\) for \(j\ge 3\).
Then the nilpotence of \(G\) is expressed by the relation \(s_m=1\).
Generators and the order of the members of the lower central series are given by
\(\gamma_j(G)=\langle s_j,\ldots,s_{m-1}\rangle\)
\cite[p.58, Lem.2.9]{Bl} and
\(\lvert\gamma_j(G)\rvert=p^{m-j}\)
\cite[p.53, Lem.2.3]{Bl}
for \(2\le j\le m-1\).

The commutator group \(\gamma_2(G)\) is contained in each maximal subgroup
\(M_i=\langle g_i,\gamma_2(G)\rangle\)
as a normal subgroup of index \(p\).
According to \cite[p.52, Lem.2.1]{Bl} it follows that
\(\gamma_2(M_i)=\lbrack M_i,M_i\rbrack=\lbrack\gamma_2(G),M_i\rbrack
=\gamma_2(G)^{g_i-1}\).

\begin{enumerate}

\item
In the case \(m=2\), we have \(\mathrm{cl}(G)=1\),
whence \(G\) and all \(M_i\) are abelian.
Consequently \(\gamma_2(M_i)=1\) and
\(\lvert M_i/\gamma_2(M_i)\rvert=\lvert M_i\rvert=p^{m-1}=p\) for \(1\le i\le p+1\).

\item
In the case \(m=3\), \(G\) is an extra special group with
\(\gamma_2(G)=\Phi(G)=\zeta_1(G)\) cyclic of order \(p\).
Since \(s_2=s_{m-1}\in\zeta_1(G)\),
we obtain for \(2\le i\le p+1\), and also for \(i=1\), that
\[\gamma_2(M_i)=\gamma_2(G)^{g_i-1}=\langle s_2\rangle^{g_i-1}
=\langle\lbrack s_2,g_i\rbrack\rangle=1\]
and thus \(\lvert M_i/\gamma_2(M_i)\rvert=\lvert M_i\rvert=p^{m-1}=p^2\).\\
In the case \(m\ge 4\), we have
\[\gamma_2(G)=\chi_1(G)<\chi_2(G)=\ldots=\chi_{m-2}(G)<\chi_{m-1}(G)=G.\]
The elements \(g_i=xy^{i-2}\) with \(2\le i\le p+1\)
are contained in \(G\setminus\chi_2(G)\),
whence
\[\gamma_2(M_i)=\gamma_2(G)^{g_i-1}=\langle s_2,\ldots,s_{m-1}\rangle^{g_i-1}
=\langle\lbrack s_2,g_i\rbrack,\ldots,\lbrack s_{m-2},g_i\rbrack\rangle
\not\leq\gamma_4(G).\]
Since \(g_i\in G\setminus\chi_j(G)\), it follows that
\(\lbrack s_j,g_i\rbrack\in\gamma_{j+1}(G)\setminus\gamma_{j+2}(G)\)
for each \(2\le j\le m-2\).
Starting with
\(\langle\lbrack s_{m-2},g_i\rbrack\rangle=\langle s_{m-1}\rangle=\gamma_{m-1}(G)\),
we have by descending induction for \(m-3\ge j\ge 2\) that
\[\langle\lbrack s_j,g_i\rbrack,\lbrack s_{j+1},g_i\rbrack,\ldots,\lbrack s_{m-2},g_i\rbrack\rangle
=\langle\lbrack s_j,g_i\rbrack,\gamma_{j+2}(G)\rangle
=\langle s_{j+1},\gamma_{j+2}(G)\rangle=\gamma_{j+1}(G).\]
Therefore \(\gamma_2(M_i)=\langle s_3,\ldots,s_{m-1}\rangle=\gamma_3(G)\) and\\
\(\lvert M_i/\gamma_2(M_i)\rvert=\lvert M_i\rvert/\lvert\gamma_3(G)\rvert=p^{m-1}/p^{m-3}=p^2\)
for \(2\le i\le p+1\).

\item
The case \(m=3\) has been treated in the preceding paragraph.
In the case \(m\ge 4\), we distinguish the values \(k=0\) and \(k\ge 1\).\\
For \(k=0\) we have
\(s_2^{y-1}=\lbrack s_2,y\rbrack\in\lbrack\chi_2(G),\gamma_2(G)\rbrack=1\) and thus also
\[\lbrack s_j,y\rbrack=s_j^{y-1}=s_2^{(x-1)^{j-2}(y-1)}=s_2^{(y-1)(x-1)^{j-2}}=1\]
for all \(3\le j\le m-1\).\\
For \(k\ge 1\) we use, according to Miech \cite[p.332, Th.2, (2)]{Mi}, the existence of exponents\\
\(0\le a(m-k),\ldots,a(m-1)\le p-1\) with \(a(m-k)>0\), such that
\[s_2^{y-1}=\lbrack s_2,y\rbrack=\prod_{\ell=1}^k\,s_{m-\ell}^{a(m-\ell)}
\in\lbrack\chi_2(G),\gamma_2(G)\rbrack=\gamma_{m-k}(G).\]
Then the higher commutators with \(3\le j\le m-2\) are
\[\lbrack s_j,y\rbrack=s_j^{y-1}=s_2^{(x-1)^{j-2}(y-1)}=s_2^{(y-1)(x-1)^{j-2}}\]
\[=\left(\prod_{\ell=1}^k\,s_{m-\ell}^{a(m-\ell)}\right)^{(x-1)^{j-2}}
=\prod_{\ell=1}^k\,\left(s_{m-\ell}^{(x-1)^{j-2}}\right)^{a(m-\ell)}
=\prod_{\ell=1}^k\,s_{m+j-\ell-2}^{a(m-\ell)}
=\prod_{\ell=j-1}^k\,s_{m+j-\ell-2}^{a(m-\ell)},\]
since \(s_{m+j-\ell-2}=1\) for \(j-\ell-2\ge 0\), that is \(\ell\le j-2\).
Thus we have
\[\gamma_2(M_1)=\gamma_2(G)^{y-1}=\langle s_2,\ldots,s_{m-1}\rangle^{y-1}\]
\[=\langle\lbrack s_2,y\rbrack,\ldots,\lbrack s_{k+1},y\rbrack,\lbrack s_{k+2},y\rbrack,\ldots,\lbrack s_{m-1},y\rbrack\rangle
=\langle\lbrack s_2,y\rbrack,\ldots,\lbrack s_{k+1},y\rbrack\rangle,\]
since \(\lbrack s_j,y\rbrack=1\) for \(k+2\le j\le m-1\).
Observing that \(a(m-k)>0\) and starting with
\(\langle\lbrack s_{k+1},y\rbrack\rangle=\langle s_{m-1}^{a(m-k)}\rangle=\gamma_{m-1}(G)\),
we obtain by descending induction for \(k\ge j\ge 2\) that
\[\langle\lbrack s_j,y\rbrack,\lbrack s_{j+1},y\rbrack,\ldots,\lbrack s_{k+1},y\rbrack\rangle
=\langle\lbrack s_j,y\rbrack,\gamma_{m+j-k-1}(G)\rangle\]
\[=\langle s_{m+j-k-2}^{a(m-k)},\gamma_{m+j-k-1}(G)\rangle=\gamma_{m+j-k-2}(G).\]
Therefore \(\gamma_2(M_1)=\langle s_{m-k},\ldots,s_{m-1}\rangle=\gamma_{m-k}(G)\)
and\\
\(\lvert M_1/\gamma_2(M_1)\rvert=\lvert M_1\rvert/\lvert\gamma_{m-k}(G)\rvert
=p^{m-1}/p^{m-(m-k)}=p^{m-1}/p^k=p^{m-k-1}\).

\end{enumerate}
\end{proof}

\begin{corollary}
\label{c:MaxCg}

With the assumptions of theorem \ref{t:MaxCfg},
the commutator groups
of the maximal subgroups \(M_1,\ldots,M_{p+1}\)
of a metabelian \(p\)-group \(G\) of coclass \(\mathrm{cc}(G)=1\)
with order \(\lvert G\rvert=p^m\), \(m\ge 3\)
and \(\lbrack\chi_2(G),\gamma_2(G)\rbrack=\gamma_{m-k}(G)\), \(k\ge 0\)
are given by

\[\begin{array}{rclcrclc}
\gamma_2(M_1)&=&\gamma_{m-k}(G)&\text{ with }&\lvert\gamma_2(M_1)\rvert&=&p^k,&\\
\gamma_2(M_i)&=&\gamma_3(G)    &\text{ with }&\lvert\gamma_2(M_i)\rvert&=&p^{m-3}&\text{ for }2\le i\le p+1.
\end{array}\]

\noindent
\(M_1\) is abelian, if and only if \(k=0\), that is
\(\lbrack\chi_2(G),\gamma_2(G)\rbrack=1\).\\
\(M_2,\ldots,M_{p+1}\) are abelian, if and only if \(m=3\).

\end{corollary}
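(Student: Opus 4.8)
The plan is to derive Corollary \ref{c:MaxCg} almost entirely by specialising Theorem \ref{t:MaxCfg} to the case of coclass one, i.e. to metabelian \(p\)-groups \(G\) of maximal class. Recall that for such a group the order satisfies \(\lvert G\rvert=p^m\) with \(m=\mathrm{cl}(G)+1\), and that \(\gamma_2(G)\) is abelian (since \(G\) is metabelian) and \(G/\gamma_2(G)\) is of type \((p,p)\) (since it is two-generated of exponent \(p\), as \(G^p\le\gamma_2(G)=\Phi(G)\)). Thus all hypotheses of Theorem \ref{t:MaxCfg} hold, and the generator selection \(x\in G\setminus\chi_2(G)\), \(y\in\chi_2(G)\setminus\gamma_2(G)\) together with \(\lbrack\chi_2(G),\gamma_2(G)\rbrack=\gamma_{m-k}(G)\) is exactly the set-up assumed there.

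First I would read off the identification of the commutator subgroups. From the proof of Theorem \ref{t:MaxCfg}, part (2), the relation \(\gamma_2(M_i)=\gamma_3(G)\) was established for \(2\le i\le p+1\) whenever \(m\ge 3\) (the case \(m=3\) giving \(\gamma_3(G)=1\), the case \(m\ge 4\) giving the nontrivial equality), so this hands us the second line of the displayed array directly, with \(\lvert\gamma_2(M_i)\rvert=\lvert\gamma_3(G)\rvert=p^{m-3}\) by Blackburn's order formula \cite[p.53, Lem.2.3]{Bl}. Likewise, part (3) of the same proof shows \(\gamma_2(M_1)=\gamma_{m-k}(G)\): for \(m=3\) this is forced since \(\gamma_2(M_1)=1=\gamma_3(G)\) and \(k=0\), and for \(m\ge 4\) the descending-induction argument in the proof yields it for all \(k\ge 0\) (the \(k=0\) branch giving \(\gamma_2(M_1)=1=\gamma_m(G)\)). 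Hence \(\lvert\gamma_2(M_1)\rvert=\lvert\gamma_{m-k}(G)\rvert=p^{m-(m-k)}=p^k\), again by the order formula. This completes the array.

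Next I would treat the two abelianity criteria, which are immediate consequences. Since \(\gamma_2(M_1)=\gamma_{m-k}(G)\) and \(\gamma_{m-k}(G)=1\) exactly when \(m-k\ge m\), i.e. \(k=0\), we get that \(M_1\) is abelian iff \(k=0\), which by the definition of \(k\) in the theorem is the statement \(\lbrack\chi_2(G),\gamma_2(G)\rbrack=\gamma_m(G)=1\). For the others, \(\gamma_2(M_i)=\gamma_3(G)\) vanishes iff \(3\ge m\), i.e. \(m=3\) (the case \(m=2\), class one, is excluded here since we assume \(m\ge 3\)); so \(M_2,\ldots,M_{p+1}\) are abelian iff \(m=3\). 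One small point to verify is the overlap of the two statements when \(m=3\): there indeed \(k=0\) is forced, consistent with \(M_1\) being abelian as well, and the extra-special structure noted in the theorem's proof confirms all \(M_i\) are abelian in that case.

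There is essentially no obstacle here — the corollary is a repackaging of what was proved in Theorem \ref{t:MaxCfg}. The only mildly delicate point is making sure the range of \(k\) and the meaning of \(\gamma_{m-k}(G)\) at the boundary \(k=0\) are handled uniformly (so that the single formula \(\gamma_2(M_1)=\gamma_{m-k}(G)\) covers the abelian case), and pointing out that the hypothesis ``coclass one'' is precisely what pins \(s=2\) and legitimises writing \(\chi_2(G)\) in place of the general \(\chi_s(G)\) of Definition \ref{d:InvK}. I would close by remarking that the abelian base case \(G\) of type \((p,p)\) corresponds to \(m=2\) and is subsumed by part (1) of the theorem, which is why the corollary restricts to \(m\ge 3\).
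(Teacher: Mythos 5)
Your proposal is correct and follows exactly the route the paper intends: the corollary is stated without a separate proof precisely because the identifications \(\gamma_2(M_1)=\gamma_{m-k}(G)\) and \(\gamma_2(M_i)=\gamma_3(G)\) for \(2\le i\le p+1\) are established inside the proof of Theorem \ref{t:MaxCfg}, and the orders and abelianity criteria then follow from \(\lvert\gamma_j(G)\rvert=p^{m-j}\) just as you write. Your boundary checks (the \(k=0\) and \(m=3\) cases, and the remark that coclass one forces \(s=2\)) are exactly the points that need verifying, so nothing is missing.
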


Now we reduce the desired determination of the \(p\)-class numbers
\(\mathrm{h}_p(N_i)\)
to the purely group theoretic preparations in theorem \ref{t:MaxCfg},
using Artin's reciprocity law \cite{Ar1}.

\begin{theorem}
\label{t:MaxDir}

Let \(K\) be an arbitrary base field
with \(p\)-class group \(\mathrm{Cl}_p(K)\) of type \((p,p)\).
Suppose that the second \(p\)-class group
\(G=\mathrm{Gal}(\mathrm{F}_p^2(K)\vert K)\)
is abelian or metabelian of coclass \(\mathrm{cc}(G)=1\)
with order \(\lvert G\rvert=p^m\)
and class \(\mathrm{cl}(G)=m-1\),
where \(m\ge 2\).
Under the assumptions of theorem \ref{t:MaxCfg}
for the generators of \(G\),
the \(p\)-class numbers of the
\(p+1\) unramified cyclic extension fields \(N_1,\ldots,N_{p+1}\) of \(K\)
of relative prime degree \(p\ge 2\) are given by

\begin{eqnarray*}
\mathrm{h}_p(N_i)&=&p\text{ for }1\le i\le p+1,\text{ if }m=2,\\
\mathrm{h}_p(N_i)&=&p^2\text{ for }2\le i\le p+1,\text{ if }m\ge 3,\\
\mathrm{h}_p(N_1)&=&
\begin{cases}
p^{m-1},&\text{ if }\lbrack\chi_2(G),\gamma_2(G)\rbrack=1,\ k=0,\ m\ge 3,\ p\ge 2,\\
p^{m-2},&\text{ if }\lbrack\chi_2(G),\gamma_2(G)\rbrack=\gamma_{m-1}(G),\ k=1,\ m\ge 5,\ p\ge 3,\\
p^{m-k-1},&\text{ if }\lbrack\chi_2(G),\gamma_2(G)\rbrack=\gamma_{m-k}(G),\ k\ge 2,\ m\ge 6,\ p\ge 5.\\
\end{cases}
\end{eqnarray*}

\end{theorem}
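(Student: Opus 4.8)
The plan is to reduce everything to the purely group-theoretic Theorem \ref{t:MaxCfg} via Artin's reciprocity law, so that the proof becomes a translation of group orders into class numbers followed by an unwinding of the case distinction forced by the admissible range of the invariant \(k\).

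First I would record the field-theoretic dictionary. Since \(\mathrm{Cl}_p(K)\) is of type \((p,p)\), the extension \(\mathrm{F}_p^1(K)\vert K\) is abelian of type \((p,p)\), and its \(p+1\) intermediate fields of relative degree \(p\) are exactly the unramified cyclic extensions \(N_1,\dots,N_{p+1}\) of \(K\); they lie inside \(\mathrm{F}_p^1(K)\le\mathrm{F}_p^2(K)\) and correspond, by Galois theory, to the \(p+1\) maximal subgroups \(M_i=\mathrm{Gal}(\mathrm{F}_p^2(K)\vert N_i)\) of \(G\), each normal of index \(p\) and containing \(\gamma_2(G)=\Phi(G)\). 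Next I would locate the Hilbert \(p\)-class field of \(N_i\) inside \(\mathrm{F}_p^2(K)\): because \(\mathrm{F}_p^1(K)\vert N_i\) is an abelian unramified \(p\)-extension we have \(\mathrm{F}_p^1(K)\subseteq\mathrm{F}_p^1(N_i)\), and \(\mathrm{F}_p^1(N_i)\vert\mathrm{F}_p^1(K)\) is a \(p\)-extension, unramified (being unramified over \(N_i\), hence over \(K\)) and abelian (a subextension of the abelian \(\mathrm{F}_p^1(N_i)\vert N_i\)), so \(\mathrm{F}_p^1(N_i)\subseteq\mathrm{F}_p^1(\mathrm{F}_p^1(K))=\mathrm{F}_p^2(K)\). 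Consequently \(\mathrm{Gal}(\mathrm{F}_p^2(K)\vert\mathrm{F}_p^1(N_i))\) is a subgroup of \(M_i\) with abelian quotient, hence contains \(\gamma_2(M_i)\); conversely the fixed field of \(\gamma_2(M_i)\) is abelian and unramified over \(N_i\), hence contained in \(\mathrm{F}_p^1(N_i)\). Thus \(\mathrm{Gal}(\mathrm{F}_p^2(K)\vert\mathrm{F}_p^1(N_i))=\gamma_2(M_i)\), and Artin's reciprocity law \cite{Ar1} gives \(\mathrm{Cl}_p(N_i)\simeq\mathrm{Gal}(\mathrm{F}_p^1(N_i)\vert N_i)\simeq M_i/\gamma_2(M_i)\), whence \(\mathrm{h}_p(N_i)=\lvert M_i/\gamma_2(M_i)\rvert\).

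It then remains to check that \(G\) satisfies the hypotheses of Theorem \ref{t:MaxCfg} and to read off the orders. By assumption \(G\) is either of type \((p,p)\) (the degenerate case \(m=2\), \(\mathrm{F}_p^2(K)=\mathrm{F}_p^1(K)\)) or metabelian of coclass \(1\), so \(\gamma_2(G)\) is abelian, \(G/\gamma_2(G)\) is of type \((p,p)\), \(\lvert G\rvert=p^m\) and \(\mathrm{cl}(G)=m-1\); moreover for \(m\ge 4\) the two-step centraliser \(\chi_2(G)\) is a maximal subgroup strictly above \(\gamma_2(G)\) (since \(s=2\) for coclass one), so generators \(x\in G\setminus\chi_2(G)\), \(y\in\chi_2(G)\setminus\gamma_2(G)\) with \(M_1=\langle y,\gamma_2(G)\rangle\) and \(M_i=\langle xy^{i-2},\gamma_2(G)\rangle\) as in Theorem \ref{t:MaxCfg} exist, the labeling of the \(N_i\) being fixed accordingly. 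Theorem \ref{t:MaxCfg} then yields \(\lvert M_i/\gamma_2(M_i)\rvert=p\) for all \(i\) if \(m=2\), and for \(m\ge 3\) it gives \(\lvert M_i/\gamma_2(M_i)\rvert=p^2\) for \(2\le i\le p+1\) and \(\lvert M_1/\gamma_2(M_1)\rvert=p^{m-k-1}\), where \([\chi_2(G),\gamma_2(G)]=\gamma_{m-k}(G)\). Together with the previous paragraph this already gives the first two lines of the claimed formula; for \(\mathrm{h}_p(N_1)=p^{m-k-1}\) I would split according to the admissible range of \(k\) recorded in Theorem \ref{t:MaxCfg} after Miech \cite{Mi}: \(k=0\) (equivalently \([\chi_2(G),\gamma_2(G)]=1\)) is possible for every \(p\ge 2\) as soon as \(m\ge 3\), giving \(p^{m-1}\); \(k=1\) forces \(m-4\ge 1\), i.e.\ \(m\ge 5\), and, since \(m\ge p+1\) would impose \(k\le p-2\), it forces \(p\ge 3\), giving \(p^{m-2}\); and \(k\ge 2\) forces \(m\ge k+4\ge 6\) together with \(p\ge k+2\), hence \(p\ge 5\), giving \(p^{m-k-1}\).

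The genuinely hard content is packaged away in Theorem \ref{t:MaxCfg} and Miech's bounds on \(k\), which we may invoke; within the present argument the only points needing care are the inclusion \(\mathrm{F}_p^1(N_i)\subseteq\mathrm{F}_p^2(K)\) with the identification of \(\gamma_2(M_i)\) as the associated Galois group, and the bookkeeping that matches the arithmetic constraints on \((m,p)\) to the group-theoretic constraints on \(k\); everything else is a direct transcription through Artin reciprocity.
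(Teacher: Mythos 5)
Your proposal is correct and follows essentially the same route as the paper: Artin reciprocity plus the Galois correspondence identify \(\mathrm{h}_p(N_i)\) with \(\lvert M_i/\gamma_2(M_i)\rvert\), and Theorem \ref{t:MaxCfg} together with Miech's bounds on \(k\) yields the stated case distinction. The only difference is that you explicitly verify \(\mathrm{F}_p^1(N_i)\subseteq\mathrm{F}_p^2(K)\) and \(\mathrm{Gal}(\mathrm{F}_p^2(K)\vert\mathrm{F}_p^1(N_i))=\gamma_2(M_i)\), a step the paper leaves implicit under the phrase \emph{Galois theory yields a further isomorphism}; this is a welcome addition but not a different method.
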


\begin{proof}

Due to the Artin isomorphism \cite[p.361, Allgemeines Reziprozit\"atsgesetz]{Ar1}
\[\mathrm{Cl}_p(N_i)\simeq\mathrm{Gal}(\mathrm{F}_p^1(N_i)|N_i),\]
the \(p\)-class group of the extension field \(N_i\) is isomorphic to the
relative Galois group of the Hilbert \(p\)-class field of \(N_i\) over \(N_i\)
for \(1\le i\le p+1\) (see also Miyake \cite[p.297, Cor.]{My}).
Galois theory yields a further isomorphism
\[\mathrm{Gal}(\mathrm{F}_p^1(N_i)|N_i)
\simeq\mathrm{Gal}(\mathrm{F}_p^2(K)|N_i)/\mathrm{Gal}(\mathrm{F}_p^2(K)|\mathrm{F}_p^1(N_i))
=M_i/\gamma_2(M_i)\]
to the commutator factor group of that maximal normal subgroup \(M_i\)
of the second \(p\)-class group \(G\) of \(K\),
which is associated with the extension \(N_i\)
by the relation \(M_i=\mathrm{Gal}(\mathrm{F}_p^2(K)|N_i)\).
Therefore the \(p\)-class number of \(N_i\),
\(\mathrm{h}_p(N_i)=\lvert M_i/\gamma_2(M_i)\rvert\),
is equal to the order of the commutator factor group of the
corresponding maximal normal subgroup \(M_i\),
which has been determined in theorem \ref{t:MaxCfg}.
According to \cite[p.82]{Bl},
the possible maximum of the invariant \(k\) is dependent on \(m\) and \(p\).
\end{proof}


\subsection{Metabelian \(3\)-groups \(G\) of coclass \(\mathrm{cc}(G)\ge 2\)}
\label{ss:Low}

As in the preceding section, we begin with a purely group theoretic statement
concerning the order of the commutator factor groups \(M_i/\gamma_2(M_i)\)
of the maximal normal subgroups \(M_i\)
of a metabelian \(3\)-group \(G\) of non-maximal class.

\begin{theorem}
\label{t:LowCfg}

Let \(G\) be a metabelian \(3\)-group of coclass \(\mathrm{cc}(G)\ge 2\)
with order \(\lvert G\rvert=3^n\), class \(\mathrm{cl}(G)=m-1\), 
and invariant \(e=n-m+2\ge 3\), where \(4\le m<n\le 2m-3\).
Suppose that the commutator factor group \(G/\gamma_2(G)\) is of type \((3,3)\).
Let generators of \(G=\langle x,y\rangle\) be selected such that
\(\gamma_3(G)=\langle x^3,y^3,\gamma_4(G)\rangle\),
\(x\in G\setminus\chi_s(G)\), if \(s<m-1\),
and \(y\in\chi_s(G)\setminus\gamma_2(G)\).
Assume that the order of the maximal normal subgroups
\(M_i=\langle g_i,\gamma_2(G)\rangle\)
is defined by \(g_1=y\), \(g_2=x\), \(g_3=xy\), and \(g_4=xy^{-1}\).
Finally, let the invariant \(k\) of \(G\) be declared by
\(\lbrack\chi_s(G),\gamma_e(G)\rbrack=\gamma_{m-k}(G)\),
where \(k=0\) for \(m=4\) and \(0\le k\le 1\) for \(m\ge 5\) \cite{Ne}.

Then the order of the commutator factor groups of \(M_1,\ldots,M_4\) is given by

\begin{enumerate}
\item
\(\lvert M_1/\gamma_2(M_1)\rvert=3^{m-k-1}\),
\item
\(\lvert M_2/\gamma_2(M_2)\rvert=3^e\),
\item
\(\lvert M_i/\gamma_2(M_i)\rvert=3^3\) for \(3\le i\le 4\).
\end{enumerate}

\end{theorem}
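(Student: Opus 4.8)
The plan is to imitate the proof of Theorem~\ref{t:MaxCfg}, with Miech's presentations of metabelian $p$-groups of maximal class replaced by Nebelung's presentations \cite{Ne} of metabelian $3$-groups of coclass $\mathrm{cc}(G)\ge 2$ with abelianisation of type $(3,3)$. Fix the generators $x,y$ as in the hypotheses, set $s_2=[y,x]$ and $s_j=[s_{j-1},x]\in\gamma_j(G)$ for $j\ge 3$ (so that $s_m=1$ records the nilpotency), and recall Nebelung's second family of generators $t_j$ accounting for the layers $\gamma_j(G)/\gamma_{j+1}(G)$ of $3$-rank two among $3\le j\le e$, together with the structure relations expressing $[s_j,x]$, $[s_j,y]$, $[t_j,x]$, $[t_j,y]$ as explicit words in the $s_\ell$ and $t_\ell$. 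From these relations one reads off generators and the orders $|\gamma_j(G)|$ of the members of the lower central series, which will be used throughout; one also records which of the centralisers $\chi_j(G)$ contain each of $x,y,xy,xy^{-1}$, the key point being that $x$, $xy$, $xy^{-1}$ all lie outside $\chi_j(G)$ for $2\le j\le m-2$, whereas $y\in\chi_s(G)$.

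The reduction step is the same as in the maximal-class case: since $G$ is metabelian, $\gamma_2(G)$ is abelian and therefore a normal subgroup of index $p$ in each $M_i=\langle g_i,\gamma_2(G)\rangle$, so Blackburn's Lemma~2.1 \cite[p.52]{Bl} yields $\gamma_2(M_i)=[M_i,M_i]=[\gamma_2(G),M_i]=\gamma_2(G)^{g_i-1}$. Hence it suffices to compute the image of $\gamma_2(G)$ under the operator $g_i-1$ for $g_1=y$, $g_2=x$, $g_3=xy$, $g_4=xy^{-1}$; here it is convenient to regard $\gamma_2(G)$ as a module over the group ring of $G/\gamma_2(G)$, the conjugation action being well defined because $\gamma_2(G)$ is abelian.

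For $M_2$ ($g_2=x$): since $x$ avoids $\chi_j(G)$ for $2\le j\le m-2$, conjugation by $x$ acts non-degenerately along the main line $s_2\mapsto s_3\mapsto\cdots\mapsto s_{m-1}\mapsto 1$, and a descending induction exactly as in part~(2) of Theorem~\ref{t:MaxCfg}, now also keeping track of the images $t_j^{x-1}$, shows that $\gamma_2(G)^{x-1}$ has order $3^{m-3}$, whence $|M_2/\gamma_2(M_2)|=3^{\,n-1-(m-3)}=3^{\,n-m+2}=3^e$. For $M_1$ ($g_1=y$): because $y\in\chi_s(G)$ and $[\chi_s(G),\gamma_e(G)]=\gamma_{m-k}(G)$, Nebelung's relations place the leading terms of $[s_2,y]$ and of the $[t_j,y]$ in $\gamma_{m-k}(G)$; propagating through $[s_j,y]=s_2^{(x-1)^{j-2}(y-1)}$ and using the abelianness of $\gamma_2(G)$ to interchange the operator factors up to terms that vanish at the relevant depth, a descending induction as in part~(3) of Theorem~\ref{t:MaxCfg} gives that $\gamma_2(G)^{y-1}$ has order $3^{\,e-2+k}$, so $|M_1/\gamma_2(M_1)|=3^{\,n-1-(e-2+k)}=3^{m-k-1}$, the admissible values being $k=0$ for $m\ge 4$ and $k=1$ for $m\ge 5$ in accordance with Nebelung's bound on $k$.

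The remaining case of $M_3$ and $M_4$ ($g_i=xy^{\pm 1}$) is the delicate one. Expanding $[u,xy^{\pm1}]=[u,y^{\pm1}]\,[u,x]^{y^{\pm1}}$ for $u\in\gamma_2(G)$ and simplifying with $[u,x]\in\gamma_3(G)$ and the commutativity of $\gamma_2(G)$, one sees that the $x$-component of $g_i-1$ still reproduces the main-line generators $s_3,\dots,s_{m-1}$, while the $y$-component supplies the missing second coordinate at the rank-two layers; one then has to prove that the two together generate a subgroup of index exactly $9$ in $\gamma_2(G)$, i.e.\ that $\gamma_2(G)^{g_i-1}$ has order $3^{n-4}$ and hence $|M_i/\gamma_2(M_i)|=3^3$. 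I expect this verification to be the main obstacle: in contrast to the maximal-class case the layers of $3$-rank two require tracking two independent coordinates at once, and excluding both an accidental collapse of the image (making it too small) and an unexpected survival (too large) forces one into the fine arithmetic of Nebelung's structure constants --- in particular the $3$-specific binomial congruences, and the separation of the generic case $e=s$ from the exceptional case $e=m-2<s=m-1$ of cyclic centre $\zeta_1(G)$, in which the value of $k$ defined by $[\chi_s(G),\gamma_e(G)]=\gamma_{m-k}(G)$ must be handled with care.
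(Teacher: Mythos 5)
Your overall strategy coincides with the paper's: reduce to \(\gamma_2(M_i)=\gamma_2(G)^{g_i-1}\) via Blackburn's Lemma 2.1, then compute the four images using Nebelung's two families of generators. Your conclusions for \(M_1\) and \(M_2\) (commutator subgroups of orders \(3^{e+k-2}\) and \(3^{m-3}\)) are the correct ones, although you assert rather than derive the exact orders of the subgroups \(\Sigma_4=\langle\sigma_\ell\mid\ell\ge4\rangle\) and \(T_4=\langle\tau_\ell\mid\ell\ge4\rangle\); the paper obtains the upper bounds \(\lvert\Sigma_j\rvert\le 3^{m-j}\) and \(\lvert T_j\rvert\le 3^{e+1+k-j}\) from the third-power relations \(\sigma_j^3\sigma_{j+1}^3\sigma_{j+2}=1\) (and likewise for the \(\tau_\ell\)) and then shows they are attained by comparing \(\lvert\Sigma_jT_j\rvert\) with the independently known orders \(\lvert\gamma_j(G)\rvert\), taking into account the single connecting relation \(\tau_{e+1}=\sigma_{m-1}^{-\rho}\). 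This counting step is needed to turn your generating sets into the stated orders.

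The genuine gap is the one you flag yourself: you never prove that \(\gamma_2(G)^{xy^{\pm1}-1}\) has order exactly \(3^{n-4}\), so part (3) of the theorem is not established. The difficulty you anticipate with \lq\lq the fine arithmetic of Nebelung's structure constants\rq\rq\ is, however, misplaced; the paper closes this case with a short argument that your own setup already supports. Since \(\sigma_j^{y-1}=1\) and \(\tau_\ell^{x-1}=1\), the product rule gives \(\sigma_j^{xy^{\pm1}-1}=\sigma_{j+1}\) and \(\tau_\ell^{xy^{\pm1}-1}\in T_{\ell+1}\) with leading term \(\tau_{\ell+1}^{\pm1}\), so the image contains all of \(\gamma_4(G)=\Sigma_4T_4\) --- this excludes your \lq\lq accidental collapse\rq\rq; and \(s_2^{xy^{\pm1}-1}\equiv s_3t_3^{\pm1}\pmod{\gamma_4(G)}\), whence \(\gamma_2(M_i)=\langle s_3t_3^{\pm1},\gamma_4(G)\rangle\). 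The element \(s_3t_3^{\pm1}\) lies outside \(\gamma_4(G)\), because \(\lbrack s_2,z\rbrack\in\gamma_4(G)\) for some \(z\in G\setminus\gamma_2(G)\) would force \(z\in\chi_2(G)\), hence \(s=2\) and maximal class, contradicting \(\mathrm{cc}(G)\ge2\); and its cube lies in \(\gamma_4(G)\) by the connecting relations \(s_3^3=\sigma_5\), \(t_3^3=\tau_5^{-1}\) together with the commutativity of \(\gamma_2(G)\), which excludes \lq\lq unexpected survival\rq\rq. Hence \(\lvert\gamma_2(M_i)\rvert=3\lvert\gamma_4(G)\rvert=3^{n-4}\) and \(\lvert M_i/\gamma_2(M_i)\rvert=3^{n-1}/3^{n-4}=3^3\); no separation of the cases \(e=s\) and \(e=m-2<s=m-1\) is required here.
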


\begin{proof}

For a \(3\)-group \(G\) of coclass \(\mathrm{cc}(G)\ge 2\)
we also define the main commutator
\(s_2=t_2=\lbrack y,x\rbrack\in\gamma_2(G)\)
as the generator of the cyclic factor \(\gamma_2(G)/\gamma_3(G)\).
But since the factors \(\gamma_j(G)/\gamma_{j+1}(G)\)
are bicyclic  for \(3\le j\le e\), we need two sequences of higher commutators,
which are defined recursively by
\(s_j=\lbrack s_{j-1},x\rbrack=s_{j-1}^{x-1}\in\gamma_j(G)\) and
\(t_j=\lbrack t_{j-1},y\rbrack=t_{j-1}^{y-1}\in\gamma_j(G)\) for \(j\ge 3\).
The structure of the lower central series appears more clearly,
when we start with the third powers \(\sigma_3=y^3\) and \(\tau_3=x^3\),
which generate the first bicyclic factor \(\gamma_3(G)/\gamma_4(G)\),
whereas \((xy)^3\) and \((xy^{-1})^3\) are located in
the second center \(\zeta_2(G)\),
and we recursively form the commutators
\(\sigma_j=\lbrack\sigma_{j-1},x\rbrack\) for \(4\le j\le m\) and
\(\tau_\ell=\lbrack\tau_{\ell-1},y\rbrack\) for \(4\le\ell\le e+2\).
Then, due to the relations
\(\sigma_j^{y-1}=1\) and \(\tau_j^{x-1}=1\) for \(j\ge 3\),
the \(3\)-rank of the non-cyclic factors is bounded by two,
according to \cite[p.47, Th.1.1]{Bl}.
The third powers satisfy the relations
\(\sigma_j^3\sigma_{j+1}^3\sigma_{j+2}=1\) and
\(\tau_j^3\tau_{j+1}^3\tau_{j+2}=1\) for \(j\ge 3\).
The nilpotence of \(G\) is expressed by the relations
\(\sigma_m=1\) and \(\tau_{e+2}=1\).
There is only a single relation between the
sequences \((\sigma_j)_{j\ge 3}\) and \((\tau_j)_{j\ge 3}\)
of the shape \(\tau_{e+1}=\sigma_{m-1}^{-\rho}\) with \(-1\le\rho\le 1\)
in the center \(\zeta_1(G)\).
In the range \(3\le j\le e\) of bicyclic factors,
generators of the members of the lower central series are given by
\(\gamma_j(G)=\langle\sigma_j,\ldots,\sigma_{m-1},\tau_j,\ldots,\tau_e\rangle\),
and in the range \(e+1\le j\le m-1\) of cyclic factors we have
\(\gamma_j(G)=\langle\sigma_j,\ldots,\sigma_{m-1}\rangle\) \cite{Ne}.

Since the commutator group \(\gamma_2(G)\) is contained in each maximal subgroup
\(M_i=\langle g_i,\gamma_2(G)\rangle\)
as a normal subgroup of index \(3\),
the commutator group of \(M_i\) is given by
\(\gamma_2(M_i)=\lbrack M_i,M_i\rbrack=\lbrack\gamma_2(G),M_i\rbrack
=\gamma_2(G)^{g_i-1}
=\langle s_2,\sigma_3,\ldots,\sigma_{m-1},\tau_3,\ldots,\tau_e\rangle^{g_i-1}\),
according to \cite[p.52, Lem.2.1]{Bl}.
Now we are going to use this representation successively for \(1\le i\le 4\),
defining two series of subgroups
\(\Sigma_j=\langle\sigma_\ell\mid\ell\ge j\rangle\)
and
\(T_j=\langle\tau_\ell\mid\ell\ge j\rangle\)
for \(j\ge 3\).

\begin{enumerate}

\item
With \(Y=y-1\), we have
\(\gamma_2(M_1)=\gamma_2(G)^{y-1}
=\langle s_2,\sigma_3,\ldots,\sigma_{m-1},\tau_3,\ldots,\tau_e\rangle^Y\)\\
\(=\langle s_2^Y,\sigma_3^Y,\ldots,\sigma_{m-1}^Y,\tau_3^Y,\ldots,\tau_e^Y\rangle
=\langle t_3,1,\ldots,1,\tau_4,\ldots,\tau_{e+1}\rangle
=\langle t_3,T_4\rangle\).

\item
With \(X=x-1\), we have
\(\gamma_2(M_2)=\gamma_2(G)^{x-1}
=\langle s_2,\sigma_3,\ldots,\sigma_{m-1},\tau_3,\ldots,\tau_e\rangle^X\)\\
\(=\langle s_2^X,\sigma_3^X,\ldots,\sigma_{m-2}^X,\sigma_{m-1}^X,\tau_3^X,\ldots,\tau_e^X\rangle
=\langle s_3,\sigma_4,\ldots,\sigma_{m-1},1,1,\ldots,1\rangle
=\langle s_3,\Sigma_4\rangle\).

\item
For
\(\gamma_2(M_3)=\gamma_2(G)^{xy-1}
=\langle s_2,\sigma_3,\ldots,\sigma_{m-1},\tau_3,\ldots,\tau_e\rangle^{xy-1}\),\\
we obtain, according to the right product rule for commutators,\\
\(\sigma_j^{xy-1}=\lbrack\sigma_j,xy\rbrack
=\lbrack\sigma_j,y\rbrack\cdot\lbrack\sigma_j,x\rbrack\cdot\lbrack\lbrack\sigma_j,x\rbrack,y\rbrack
=1\cdot\sigma_{j+1}\cdot\lbrack\sigma_{j+1},y\rbrack=\sigma_{j+1}\)
for \(3\le j\le m-1\),\\
\(\tau_\ell^{xy-1}=\lbrack\tau_\ell,xy\rbrack
=\lbrack\tau_\ell,y\rbrack\cdot\lbrack\tau_\ell,x\rbrack\cdot\lbrack\lbrack\tau_\ell,x\rbrack,y\rbrack
=\tau_{\ell+1}\cdot 1\cdot\lbrack 1,y\rbrack=\tau_{\ell+1}\)
for \(3\le\ell\le e+1\),\\
and
\(s_2^{xy-1}=\lbrack s_2,xy\rbrack
=\lbrack s_2,y\rbrack\cdot\lbrack s_2,x\rbrack\cdot\lbrack\lbrack s_2,x\rbrack,y\rbrack
=t_3s_3s_2^{XY}\equiv t_3s_3\pmod{\gamma_4(G)}\),\\
since \(s_2^X=s_3\in\gamma_3(G)\) and thus \(s_2^{XY}=\lbrack s_3,y\rbrack\in\gamma_4(G)\).\\
Therefore
\(\gamma_2(M_3)
=\langle s_3t_3s_2^{XY},\sigma_4,\ldots,\sigma_{m-1},\tau_4,\ldots,\tau_{e+1}\rangle
=\langle s_3t_3s_2^{XY},\Sigma_4T_4\rangle
=\langle s_3t_3,\gamma_4(G)\rangle\).

\item
For
\(\gamma_2(M_4)=\gamma_2(G)^{xy^{-1}-1}
=\langle s_2,\sigma_3,\ldots,\sigma_{m-1},\tau_3,\ldots,\tau_e\rangle^{xy^{-1}-1}\),\\
we use the right product rule and the rule for the inverse to obtain\\
\(\sigma_j^{xy^{-1}-1}=\lbrack\sigma_j,xy^{-1}\rbrack
=\lbrack\sigma_j,y^{-1}\rbrack\cdot\lbrack\sigma_j,x\rbrack\cdot\lbrack\lbrack\sigma_j,x\rbrack,y^{-1}\rbrack
=\lbrack\sigma_j,y^{-1}\rbrack\cdot\sigma_{j+1}\cdot\lbrack\sigma_{j+1},y^{-1}\rbrack\)\\
\(=\lbrack\sigma_j,y\rbrack^{-y^{-1}}\cdot\sigma_{j+1}\cdot\lbrack\sigma_{j+1},y\rbrack^{-y^{-1}}
=1\cdot\sigma_{j+1}\cdot 1=\sigma_{j+1}\)
for \(3\le j\le m-1\),\\
\(\tau_\ell^{xy^{-1}-1}
=\lbrack\tau_\ell,xy^{-1}\rbrack
=\lbrack\tau_\ell,y^{-1}\rbrack\cdot\lbrack\tau_\ell,x\rbrack\cdot\lbrack\lbrack\tau_\ell,x\rbrack,y^{-1}\rbrack
=\lbrack\tau_\ell,y^{-1}\rbrack\cdot 1\cdot\lbrack 1,y^{-1}\rbrack
=\lbrack\tau_\ell,y^{-1}\rbrack\)\\
for \(3\le\ell\le e+1\), and\\
\(s_2^{xy^{-1}-1}=\lbrack s_2,xy^{-1}\rbrack
=\lbrack s_2,y^{-1}\rbrack\cdot\lbrack s_2,x\rbrack\cdot\lbrack\lbrack s_2,x\rbrack,y^{-1}\rbrack
=\lbrack s_2,y^{-1}\rbrack\cdot s_3\cdot\lbrack s_3,y^{-1}\rbrack
\equiv\lbrack s_2,y^{-1}\rbrack s_3\)\\
\(=s_3\lbrack s_2,y\rbrack^{-y^{-1}}
=s_3\left(t_3^{-1}\right)^{y^{-1}}=s_3t_3^{-1}t_3yt_3^{-1}y^{-1}
=s_3t_3^{-1}\lbrack t_3^{-1},y^{-1}\rbrack
\equiv s_3t_3^{-1}\pmod{\gamma_4(G)}\),\\
because
\(s_3\in\gamma_3(G)\) and \(t_3^{-1}\in\gamma_3(G)\)
imply that
\(\lbrack s_3,y^{-1}\rbrack\in\gamma_4(G)\)
and \(\lbrack t_3^{-1},y^{-1}\rbrack\in\gamma_4(G)\).\\
Now we have to investigate the commutators \(\lbrack\tau_\ell,y^{-1}\rbrack\) with \(3\le\ell\le e+1\).
We start with
\(\langle\lbrack\tau_{e+1},y^{-1}\rbrack\rangle
=\langle\lbrack\sigma_{m-1}^{-\rho},y^{-1}\rbrack\rangle=1=T_{e+2}\),
since \(\sigma_{m-1}\in\zeta_1(G)\),
and show by descending induction for \(e\ge\ell\ge 3\) that\\
\(\lbrack\tau_\ell,y^{-1}\rbrack=\lbrack\tau_\ell,y\rbrack^{-y^{-1}}
=\tau_{\ell+1}^{-y^{-1}}=\tau_{\ell+1}^{-1}\tau_{\ell+1}y\tau_{\ell+1}^{-1}y^{-1}
=\tau_{\ell+1}^{-1}\lbrack\tau_{\ell+1}^{-1},y^{-1}\rbrack\)\\
\(\in\langle\tau_{\ell+1}^{-1},T_{\ell+2}\rangle
=\langle\tau_{\ell+1},T_{\ell+2}\rangle=T_{\ell+1}\),
and in mutual dependence\\
\(\lbrack\tau_\ell^{-1},y^{-1}\rbrack
=\lbrack y,\tau_\ell^{-1}\rbrack^{y^{-1}}
=\left(\lbrack\tau_\ell,y\rbrack^{\tau_\ell^{-1}}\right)^{y^{-1}}
=\left(\tau_{\ell+1}^{\tau_\ell^{-1}}\right)^{y^{-1}}
=\tau_{\ell+1}^{y^{-1}}=\tau_{\ell+1}\tau_{\ell+1}^{-1}y\tau_{\ell+1}y^{-1}
=\tau_{\ell+1}\lbrack\tau_{\ell+1},y^{-1}\rbrack
\in\langle\tau_{\ell+1},T_{\ell+2}\rangle=T_{\ell+1}\),
and thus\\
\(\langle\lbrack\tau_\ell,y^{-1}\rbrack,\lbrack\tau_{\ell+1},y^{-1}\rbrack,\ldots,\lbrack\tau_e,y^{-1}\rbrack\rangle
=\langle\lbrack\tau_\ell,y^{-1}\rbrack,T_{\ell+2}\rangle
=\langle\tau_{\ell+1}^{-1}\lbrack\tau_{\ell+1}^{-1},y^{-1}\rbrack,T_{\ell+2}\rangle\)\\
\(=\langle\tau_{\ell+1}^{-1},T_{\ell+2}\rangle
=\langle\tau_{\ell+1},T_{\ell+2}\rangle=T_{\ell+1}\).\\
Therefore
\(\gamma_2(M_4)
=\langle s_3t_3^{-1},\sigma_4,\ldots,\sigma_{m-1},\lbrack\tau_3,y^{-1}\rbrack,\ldots,\lbrack\tau_e,y^{-1}\rbrack\rangle
=\langle s_3t_3^{-1},\Sigma_4T_4\rangle
=\langle s_3t_3^{-1},\gamma_4(G)\rangle\).

\end{enumerate}

To determine the order of the groups \(\gamma_2(M_i)\),
we first show generally that
for a sequence of group elements \((\upsilon_\ell)_{\ell\ge a}\),
satisfying the relations
\(\upsilon_\ell^3\upsilon_{\ell+1}^3\upsilon_{\ell+2}=1\) for \(\ell\ge a\)
and
\(\upsilon_\ell=1\) for \(\ell\ge b\)
with integers
\(b\ge a\),
the order of the subgroups
\(Y_j=\langle\upsilon_\ell\mid\ell\ge j\rangle\)
is bounded by
\(\lvert Y_j\rvert\le 3^{b-j}\) for \(a\le j\le b\).
We start with
\(\lvert Y_b\rvert=\lvert 1\rvert=1\le 3^0\)
and obtain by descending induction for \(b>j\ge a\) that
\(\lvert Y_j\rvert=\lvert\langle\upsilon_j,Y_{j+1}\rangle\rvert
\le 3\lvert Y_{j+1}\rvert\le 3\cdot 3^{b-(j+1)}=3^{b-j}\),
since \(\upsilon_j^3=\upsilon_{j+2}^{-1}\upsilon_{j+1}^{-3}\in Y_{j+1}\).\\
Thus we get three sequences of estimates, where always \(a=3\):

\begin{itemize}
\item
putting \(\upsilon_\ell=\sigma_\ell\), \(b=m\) we have
\(\lvert\Sigma_j\rvert\le 3^{m-j}\) for \(3\le j\le m\),
\item
putting \(k=0\) and \(\upsilon_\ell=\tau_\ell\), \(b=e+1\) we have
\(\lvert T_j\rvert\le 3^{e+1-j}\) for \(3\le j\le e+1\),
\item
putting \(k=1\) and \(\upsilon_\ell=\tau_\ell\), \(b=e+2\) we have
\(\lvert T_j\rvert\le 3^{e+2-j}\) for \(3\le j\le e+2\).
\end{itemize}

In the case \(k=1\), the single relation between the sequences
\((\sigma_\ell)_{\ell\ge 3}\) and \((\tau_\ell)_{\ell\ge 3}\)
is \(\tau_{e+1}=\sigma_{m-1}^{-\rho}\)
in the cyclic center \(\zeta_1(G)=\gamma_{m-1}(G)\).
However, in the case \(k=0\) with bicyclic center \(\zeta_1(G)\ge\gamma_{m-1}(G)\)
there is no relation.
Together we have \(\Sigma_j\cap T_j=\gamma_{m-k}(G)\) for \(3\le j\le e+1\).

Calculating the order of the members \(\gamma_i(G)\) with \(3\le i\le m\)
of the lower central series,
we can prove now that the estimates above are actually sharp.

In the case \(k=0\), we have
\(\Sigma_j\cap T_j=1\) for all \(3\le j\le m-1\).\\
In the range \(e+1\le j\le m\) of cyclic factors, we have
\(\gamma_j(G)=\langle\sigma_j,\ldots,\sigma_{m-1}\rangle=\Sigma_j\),\\
thus on the one hand
\(\lvert\gamma_j(G)\rvert=\lvert\Sigma_j\rvert\le 3^{m-j}\),\\
on the other hand
\(\lvert\gamma_j(G)\rvert=\prod_{\ell=j}^{m-1}(\gamma_\ell(G):\gamma_{\ell+1}(G))
=3^{(m-1)-(j-1)}=3^{m-j}\),\\
and consequently \(\lvert\Sigma_j\rvert=3^{m-j}\).\\
In the range \(3\le j\le e\) of bicyclic factors, we have
\(\gamma_j(G)=\langle\sigma_j,\ldots,\sigma_{m-1},\tau_j,\ldots,\tau_e\rangle
=\Sigma_j\times T_j\),\\
thus on the one hand
\(\lvert\gamma_j(G)\rvert=\lvert\Sigma_j\rvert\lvert T_j\rvert
\le 3^{m-j}\cdot 3^{e+1-j}=3^{m+e-2j+1}\),\\
on the other hand
\(\lvert\gamma_j(G)\rvert
=\lvert\gamma_{e+1}(G)\rvert\prod_{\ell=j}^{e}(\gamma_\ell(G):\gamma_{\ell+1}(G))
=3^{m-(e+1)}\cdot(3^2)^{e-(j-1)}=3^{m+e-2j+1}\),\\
and together \(\lvert\Sigma_j\rvert=3^{m-j}\) and \(\lvert T_j\rvert=3^{e+1-j}\).

In the case \(k=1\), we have \(e\le m-2\) and
\(\Sigma_j\cap T_j=\gamma_{m-1}(G)=\langle\sigma_{m-1}\rangle\) for \(3\le j\le e+1\)
with \(\sigma_{m-1}^3=1\).\\
In the range \(e+2\le j\le m\) of cyclic factors,
we obtain, similarly as for \(k=0\), that\\
\(3^{m-j}=\lvert\gamma_j(G)\rvert=\lvert\Sigma_j\rvert\le 3^{m-j}\)
and thus \(\lvert\Sigma_j\rvert=3^{m-j}\).\\
However, in the range \(3\le j\le e\) of bicyclic factors
and for \(j=e+1\),
the product \(\gamma_j(G)=\Sigma_j T_j\) is not direct
and we have\\
\(3^{m+e-2j+1}=\lvert\gamma_j(G)\rvert
=\lvert\Sigma_j\rvert\lvert T_j\rvert/\lvert\Sigma_j\cap T_j\rvert
\le 3^{m-j}\cdot 3^{e+2-j}\cdot 3^{-1}=3^{m+e-2j+1}\)\\
and thus \(\lvert\Sigma_j\rvert=3^{m-j}\) and \(\lvert T_j\rvert=3^{e+2-j}\).

Finally we obtain the order of the commutator factor groups
\(M_i/\gamma_2(M_i)\)
by means of the following consideration.\\
For an arbitrary element \(z\in G\setminus\gamma_2(G)\),
the commutator \(\lbrack s_2,z\rbrack\) cannot lie in \(\gamma_4(G)\),
since otherwise \(z\in\chi_2(G)\setminus\gamma_2(G)\) and thus \(s=2\),
which is only possible for a group \(G\) of maximal class.
Consequently
\(s_3=\lbrack s_2,x\rbrack\not\in\Sigma_4\le\gamma_4(G)\)
and
\(t_3=\lbrack s_2,y\rbrack\not\in T_4\le\gamma_4(G)\).
The elements
\(s_3t_3,s_3t_3^{-1}\)
cannot belong to \(\gamma_4(G)\) either,
since we have seen above that
\(\lbrack s_2,xy\rbrack\equiv s_3t_3\pmod{\gamma_4(G)}\) and
\(\lbrack s_2,xy^{-1}\rbrack\equiv s_3t_3^{-1}\pmod{\gamma_4(G)}\).\\
On the other hand, the third powers of these elements satisfy
\(s_3^3=\sigma_5\in\Sigma_4\) and 
\(t_3^3=\tau_5^{-1}\in T_4\),
due to the connecting relations 
\(s_j^3=\sigma_{j+2}\) and 
\(t_j^3=\tau_{j+2}^{-1}\)
for \(j\ge 3\)
\cite{Ne}.
Therefore also
\(s_3^3t_3^3,s_3^3t_3^{-3}\in\Sigma_4T_4=\gamma_4(G)\).\\
Finally, using \(n-e+2=m\) and \(n-m+2=e\), we obtain\\
\(\lvert M_1/\gamma_2(M_1)\rvert=\lvert M_1\rvert/3\lvert T_4\rvert=3^{n-1}/3^{1+e+1+k-4}=3^{m-k-1}\),\\
\(\lvert M_2/\gamma_2(M_2)\rvert=\lvert M_2\rvert/3\lvert\Sigma_4\rvert=3^{n-1}/3^{1+m-4}=3^e\),\\
and
\(\lvert M_i/\gamma_2(M_i)\rvert=\lvert M_i\rvert/3\lvert\gamma_4(G)\rvert=3^{n-1}/3^{1+n-5}=3^3\)
for \(3\le i\le 4\),\\
since
\(3^n=\lvert G\rvert
=(G:\gamma_2(G))(\gamma_2(G):\gamma_3(G))(\gamma_3(G):\gamma_4(G))\lvert\gamma_4(G)\rvert
=3^2\cdot 3\cdot 3^2\cdot\lvert\gamma_4(G)\rvert\).
\end{proof}

\begin{corollary}
\label{c:LowCg}

Under the assumptions of theorem \ref{t:LowCfg},
the commutator groups
of the maximal normal subgroups \(M_1,\ldots,M_4\)
of a metabelian \(3\)-group \(G=\langle x,y\rangle\) of coclass \(\mathrm{cc}(G)\ge 2\)
with \(G/\gamma_2(G)\) of type \((3,3)\),
\(\lvert G\rvert=3^n\), \(\mathrm{cl}(G)=m-1\), \(e=n-m+2\ge 3\),
\(4\le m<n\le 2m-3\), and 
\(\lbrack\chi_s(G),\gamma_e(G)\rbrack=\gamma_{m-k}(G)\), \(0\le k\le 1\)
are given by

\[\begin{array}{rclcrcl}
\gamma_2(M_1)&=&\langle t_3,\tau_4,\ldots,\tau_{e+1}\rangle&\text{ with }&\lvert\gamma_2(M_1)\rvert&=&3^{e+k-2},\\
\gamma_2(M_2)&=&\langle s_3,\sigma_4,\ldots,\sigma_{m-1}\rangle&\text{ with }&\lvert\gamma_2(M_2)\rvert&=&3^{m-3},\\
\gamma_2(M_3)&=&\langle s_3t_3,\gamma_4(G)\rangle&\text{ with }&\lvert\gamma_2(M_3)\rvert&=&3^{n-4},\\
\gamma_2(M_4)&=&\langle s_3t_3^{-1},\gamma_4(G)\rangle&\text{ with }&\lvert\gamma_2(M_4)\rvert&=&3^{n-4},
\end{array}\]

\noindent
where
\(s_3=\lbrack s_2,x\rbrack\), \(t_3=\lbrack s_2,y\rbrack\),
\(s_3t_3\), and \(s_3t_3^{-1}\)
lie in \(\gamma_3(G)\setminus\gamma_4(G)\),
if \(s_2=\lbrack y,x\rbrack\in\gamma_2(G)\) denotes the main commutator, and
\(\gamma_4(G)=\langle\sigma_4,\ldots,\sigma_{m-1},\tau_4,\ldots,\tau_{e+1}\rangle\)
with generators, which are defined recursively by
\(\sigma_j=\lbrack\sigma_{j-1},x\rbrack\) for \(4\le j\le m-1\) and
\(\tau_\ell=\lbrack\tau_{\ell-1},y\rbrack\) for \(4\le\ell\le e+1\),
starting with \(\sigma_3=y^3\) and \(\tau_3=x^3\).

\noindent
None of the maximal normal subgroups \(M_1,\ldots,M_4\) is abelian.

\end{corollary}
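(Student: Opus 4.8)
The corollary is essentially a re-packaging of the computations already carried out in the proof of Theorem \ref{t:LowCfg}, so the plan is to extract and reorganise what is proved there rather than to start afresh. From that proof one quotes the four images of \(\gamma_2(G)\) under the endomorphisms \(g_i-1\): explicitly \(\gamma_2(M_1)=\langle t_3,T_4\rangle\), \(\gamma_2(M_2)=\langle s_3,\Sigma_4\rangle\), \(\gamma_2(M_3)=\langle s_3t_3,\Sigma_4T_4\rangle\) and \(\gamma_2(M_4)=\langle s_3t_3^{-1},\Sigma_4T_4\rangle\), where \(\Sigma_j=\langle\sigma_\ell\mid\ell\ge j\rangle\) and \(T_j=\langle\tau_\ell\mid\ell\ge j\rangle\). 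The first step is then to replace these open-ended index ranges by finite generating lists: the nilpotence relations \(\sigma_m=1\) and \(\tau_{e+2}=1\) give \(\Sigma_4=\langle\sigma_4,\ldots,\sigma_{m-1}\rangle\) and \(T_4=\langle\tau_4,\ldots,\tau_{e+1}\rangle\), and since the single cross-relation \(\tau_{e+1}=\sigma_{m-1}^{-\rho}\) places \(\tau_{e+1}\) inside \(\langle\sigma_{m-1}\rangle\), the identity \(\Sigma_4T_4=\gamma_4(G)\) recorded in the proof of Theorem \ref{t:LowCfg} shows that \(\gamma_4(G)=\langle\sigma_4,\ldots,\sigma_{m-1},\tau_4,\ldots,\tau_{e+1}\rangle\). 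Substituting these lists into the four expressions above yields the stated presentations of the \(\gamma_2(M_i)\).

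For the orders I would just divide. Each \(M_i\) is of index \(3\) in \(G\), so \(\lvert M_i\rvert=3^{n-1}\), and dividing by the values of \(\lvert M_i/\gamma_2(M_i)\rvert\) furnished by parts (1)--(3) of Theorem \ref{t:LowCfg} gives \(\lvert\gamma_2(M_1)\rvert=3^{n-1}/3^{m-k-1}=3^{n-m+k}=3^{e+k-2}\), \(\lvert\gamma_2(M_2)\rvert=3^{n-1}/3^{e}=3^{m-3}\), and \(\lvert\gamma_2(M_i)\rvert=3^{n-1}/3^{3}=3^{n-4}\) for \(3\le i\le 4\), using the relation \(e=n-m+2\), i.e.\ \(n=m+e-2\). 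The claims that \(s_3=\lbrack s_2,x\rbrack\), \(t_3=\lbrack s_2,y\rbrack\), \(s_3t_3\) and \(s_3t_3^{-1}\) all lie in \(\gamma_3(G)\setminus\gamma_4(G)\) are taken directly from the closing part of the proof of Theorem \ref{t:LowCfg}: because \(G\) is not of maximal class one has \(s(G)\ge 3\), hence \(\lbrack s_2,z\rbrack\notin\gamma_4(G)\) for every \(z\in G\setminus\gamma_2(G)\) — which handles \(z=x\) and \(z=y\) — while the congruences \(\lbrack s_2,xy\rbrack\equiv s_3t_3\pmod{\gamma_4(G)}\) and \(\lbrack s_2,xy^{-1}\rbrack\equiv s_3t_3^{-1}\pmod{\gamma_4(G)}\), also established there, handle the other two elements. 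The recursive definition of the \(\sigma_j\) and \(\tau_\ell\) with initial terms \(\sigma_3=y^3\) and \(\tau_3=x^3\) is simply restated from Theorem \ref{t:LowCfg}.

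It remains to see that none of \(M_1,\ldots,M_4\) is abelian, i.e.\ that \(\gamma_2(M_i)\ne 1\) for all \(i\); this is read off from the orders just computed. Indeed \(m\ge 4\) forces \(\lvert\gamma_2(M_2)\rvert=3^{m-3}\ge 3\); the inequality \(n>m\ge 4\) gives \(n\ge 5\), whence \(\lvert\gamma_2(M_3)\rvert=\lvert\gamma_2(M_4)\rvert=3^{n-4}\ge 3\); and \(e\ge 3\) together with \(k\ge 0\) gives \(\lvert\gamma_2(M_1)\rvert=3^{e+k-2}\ge 3\). There is no genuine obstacle in any of this: the whole mathematical substance sits in Theorem \ref{t:LowCfg}. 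The only points that need a little care are the combinatorial bookkeeping that turns the subgroups \(\Sigma_4\) and \(T_4\) into explicit generating lists (keeping track of the cross-relation \(\tau_{e+1}=\sigma_{m-1}^{-\rho}\) so that nothing is dropped), and the reminder that the non-abelian conclusion ultimately depends on coclass \(\ge 2\) forcing \(s(G)\ge 3\).
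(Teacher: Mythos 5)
Your proposal is correct and follows exactly the route the paper intends: the corollary is stated without proof because it merely collects the intermediate identities \(\gamma_2(M_1)=\langle t_3,T_4\rangle\), \(\gamma_2(M_2)=\langle s_3,\Sigma_4\rangle\), \(\gamma_2(M_{3,4})=\langle s_3t_3^{\pm 1},\gamma_4(G)\rangle\) and the order computations already established inside the proof of Theorem \ref{t:LowCfg}. Your bookkeeping (truncating \(\Sigma_4\), \(T_4\) via \(\sigma_m=1\), \(\tau_{e+2}=1\), dividing \(3^{n-1}\) by the quotient orders using \(n=m+e-2\), and deducing non-abelianness from \(m\ge 4\), \(n\ge 5\), \(e\ge 3\)) is accurate and complete.
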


Now we come to the number theoretic application
of theorem \ref{t:LowCfg},
reducing the determination of the \(3\)-class numbers
\(\mathrm{h}_3(N_i)\)
to the purely group theoretic preliminaries in this section,
with the aid of Artin's reciprocity law \cite{Ar1}.

\begin{theorem}
\label{t:LowDir}

Let \(K\) be an arbitrary base field
with \(3\)-class group \(\mathrm{Cl}_3(K)\) of type \((3,3)\).
Suppose that the second \(3\)-class group \(G=\mathrm{Gal}(\mathrm{F}_3^2(K)\vert K)\)
is of coclass \(\mathrm{cc}(G)\ge 2\)
with order \(\lvert G\rvert=3^n\),
class \(\mathrm{cl}(G)=m-1\),
and invariant \(e=n-m+2\ge 3\),
where \(4\le m<n\le 2m-3\).
Under the assumptions of theorem \ref{t:LowCfg}
for the generators of \(G\),
the \(3\)-class numbers of the
four unramified cyclic cubic extension fields \(N_1,\ldots,N_4\) of \(K\)
are given by

\begin{eqnarray*}
\mathrm{h}_3(N_1)&=&
\begin{cases}
3^{m-1},&\text{ if }\lbrack\chi_s(G),\gamma_e(G)\rbrack=1,\ k=0,\ m\ge 4,\\
3^{m-2},&\text{ if }\lbrack\chi_s(G),\gamma_e(G)\rbrack=\gamma_{m-1}(G),\ k=1,\ m\ge 5,
\end{cases}\\
\mathrm{h}_3(N_2)&=&3^e,\\
\mathrm{h}_3(N_i)&=&3^3\text{ for }3\le i\le 4.
\end{eqnarray*}

\end{theorem}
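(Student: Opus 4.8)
The plan is to mimic the proof of Theorem \ref{t:MaxDir}, transferring the purely group-theoretic computation of Theorem \ref{t:LowCfg} to the arithmetic setting by means of Artin's reciprocity law \cite{Ar1}.

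First I would invoke the Artin isomorphism \(\mathrm{Cl}_3(N_i)\simeq\mathrm{Gal}(\mathrm{F}_3^1(N_i)\vert N_i)\) for each \(1\le i\le 4\), which identifies the \(3\)-class group of the unramified cyclic cubic extension \(N_i\) with the relative Galois group of its Hilbert \(3\)-class field. Since \(G=\mathrm{Gal}(\mathrm{F}_3^2(K)\vert K)\) is the second \(3\)-class group of the base field \(K\), the four maximal normal subgroups of \(G\) are exactly the groups \(M_i=\mathrm{Gal}(\mathrm{F}_3^2(K)\vert N_i)\); and because \(\mathrm{F}_3^1(N_i)\) is contained in \(\mathrm{F}_3^2(K)\) with \(\mathrm{Gal}(\mathrm{F}_3^2(K)\vert\mathrm{F}_3^1(N_i))=\gamma_2(M_i)\), Galois theory yields \(\mathrm{Gal}(\mathrm{F}_3^1(N_i)\vert N_i)\simeq M_i/\gamma_2(M_i)\). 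Consequently \(\mathrm{h}_3(N_i)=\lvert M_i/\gamma_2(M_i)\rvert\).

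Next I would substitute the orders computed in Theorem \ref{t:LowCfg}, after matching the arithmetic labelling of the \(N_i\) to the labelling \(g_1=y\), \(g_2=x\), \(g_3=xy\), \(g_4=xy^{-1}\) of the generators of the \(M_i\) fixed there (with \(x\in G\setminus\chi_s(G)\) and \(y\in\chi_s(G)\setminus\gamma_2(G)\)). This gives at once \(\mathrm{h}_3(N_1)=\lvert M_1/\gamma_2(M_1)\rvert=3^{m-k-1}\), \(\mathrm{h}_3(N_2)=\lvert M_2/\gamma_2(M_2)\rvert=3^e\), and \(\mathrm{h}_3(N_i)=\lvert M_i/\gamma_2(M_i)\rvert=3^3\) for \(3\le i\le 4\). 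It then remains only to split the first formula according to the admissible values of \(k\): by Nebelung's bound recalled in the hypotheses of Theorem \ref{t:LowCfg}, the value \(k=0\) already occurs for \(m=4\), giving \(\mathrm{h}_3(N_1)=3^{m-1}\) and \(\lbrack\chi_s(G),\gamma_e(G)\rbrack=1\), whereas \(k=1\) forces \(m\ge 5\), giving \(\mathrm{h}_3(N_1)=3^{m-2}\) and \(\lbrack\chi_s(G),\gamma_e(G)\rbrack=\gamma_{m-1}(G)\).

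Since all the substantive work — the descending inductions over the two commutator sequences \((\sigma_j)\) and \((\tau_\ell)\), the single connecting relation \(\tau_{e+1}=\sigma_{m-1}^{-\rho}\), and the resulting order estimates for \(\Sigma_j\), \(T_j\) and the \(\gamma_2(M_i)\) — has already been carried out inside Theorem \ref{t:LowCfg}, I expect no genuine obstacle. The only point that requires a little care is bookkeeping: one must check that the field-theoretic indexing of \(N_1,\ldots,N_4\) is consistent with the generator choice made in Theorem \ref{t:LowCfg}, so that \(N_2\) — and not some other \(N_i\) — is the extension attached to the distinguished maximal subgroup \(M_2=\langle x,\gamma_2(G)\rangle\) whose abelianisation has order governed by \(e\), the remaining two extensions \(N_3,N_4\) being the ones with the generic value \(3^3\).
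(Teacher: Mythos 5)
Your proposal is correct and follows essentially the same route as the paper: the Artin isomorphism plus Galois theory to get \(\mathrm{h}_3(N_i)=\lvert M_i/\gamma_2(M_i)\rvert\), then direct substitution of the orders from theorem \ref{t:LowCfg}, with the case split in \(\mathrm{h}_3(N_1)\) governed by Nebelung's bound \(0\le k\le 1\). Your added remark about matching the arithmetic labelling of the \(N_i\) to the generator choice \(g_1=y,\ g_2=x,\ g_3=xy,\ g_4=xy^{-1}\) is exactly the bookkeeping the paper implicitly assumes.
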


\begin{proof}

According to the Artin isomorphism \cite[p.361]{Ar1}
\[\mathrm{Cl}_3(N_i)\simeq\mathrm{Gal}(\mathrm{F}_3^1(N_i)|N_i),\]
the \(3\)-class group of the extension field \(N_i\) is isomorphic to the
relative Galois group of the Hilbert \(3\)-class field of \(N_i\) over \(N_i\)
for \(1\le i\le 4\) (see also Miyake \cite[p.297]{My}).
By Galois theory, we have a further isomorphism
\[\mathrm{Gal}(\mathrm{F}_3^1(N_i)|N_i)
\simeq\mathrm{Gal}(\mathrm{F}_3^2(K)|N_i)/\mathrm{Gal}(\mathrm{F}_3^2(K)|\mathrm{F}_3^1(N_i))
=M_i/\gamma_2(M_i)\]
to the commutator factor group of the uniquely determined maximal normal subgroup \(M_i\)
of the second \(3\)-class group \(G\) of \(K\),
which is associated with the extension \(N_i\)
by the relation \(M_i=\mathrm{Gal}(\mathrm{F}_3^2(K)|N_i)\).
Consequently the \(3\)-class number of \(N_i\),
\(\mathrm{h}_3(N_i)=\lvert M_i/\gamma_2(M_i)\rvert\),
coincides with the order of the commutator factor group of the
corresponding maximal normal subgroup \(M_i\), which
has been determined in theorem \ref{t:LowCfg}.
By \cite{Ne},
the possible maximum of the invariant \(k\) is dependent on \(m\).
\end{proof}


\section{Restrictions for class and coclass, enforced by quadratic base fields}
\label{s:Qdr}

All of our previous results generally
concern an arbitrary base field \(K\).
In this section let
\(K=\mathbb{Q}(\sqrt{D})\) be a quadratic number field,
\(p\ge 3\) an odd prime,
and \(N\vert K\) an unramified cyclic extension of relative degree \(p\).

\begin{remark}
\label{r:Hil94}
According to Hilbert's theorem \(94\)
\cite[p.279]{Hi},
the existence of an unramified cyclic extension field \(N\vert K\)
of prime degree \(p\) implies the
divisibility of the class number \(\mathrm{h}(K)\) of the base field \(K\) by \(p\).
\end{remark}


\subsection{Dihedral extensions}
\label{ss:QdrDih}

In the present situation,
we apply the theory of the absolute extension \(N\vert\mathbb{Q}\)
to obtain more sophisticated statements about \(p\)-class numbers.

\begin{proposition}
\label{p:QdrDih}

Let \(K\) be a quadratic number field and \(p\) an odd prime.
For an unramified cyclic extension \(N\vert K\) of relative degree \(p\),
\(N\vert\mathbb{Q}\) is a non-abelian absolute Galois extension
with automorphism group \(\mathrm{Gal}(N\vert\mathbb{Q})\)
isomorphic to the dihedral group \(D(2p)\) of order \(2p\).

\end{proposition}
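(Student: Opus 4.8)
The plan is to exhibit $N\vert\mathbb{Q}$ as Galois by showing that $N$ is normal over $\mathbb{Q}$, and then to identify the structure of $\mathrm{Gal}(N\vert\mathbb{Q})$ as an extension of $\mathrm{Gal}(K\vert\mathbb{Q})\simeq C_2$ by $\mathrm{Gal}(N\vert K)\simeq C_p$ in which the nontrivial automorphism of $K$ acts by inversion. First I would argue normality: let $\sigma$ denote the nontrivial automorphism of $K\vert\mathbb{Q}$, extended to an automorphism of a fixed normal closure $\widetilde{N}$ of $N$ over $\mathbb{Q}$. Then $\sigma(N)$ is again a cyclic extension of $K=\sigma(K)$ of relative degree $p$, and it is still unramified over $K$, since $\sigma$ permutes the primes of $K$ and preserves ramification indices. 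The key point is that $\sigma(N)$, like $N$, is contained in the Hilbert $p$-class field tower of $K$; more precisely, $N$ and $\sigma(N)$ both lie in $\mathrm{F}_p^1(K)$, because an unramified cyclic degree-$p$ extension of $K$ corresponds under Artin reciprocity to an index-$p$ subgroup of $\mathrm{Cl}_p(K)$. Since $\mathrm{Cl}_p(K)$ is of type $(p,p)$ there are exactly $p+1$ such subgroups, hence exactly $p+1$ such fields $N_1,\dots,N_{p+1}$, and $\sigma$ permutes this finite set; but in fact $\sigma$ fixes each $N_i$ setwise, because $\sigma$ acts on $\mathrm{Cl}_p(K)$ by inversion (as is classical for the relative class group of a quadratic field — Artin reciprocity intertwines the Galois action with the action on ideal classes, and complex conjugation sends a class to its inverse in the quadratic setting), and an index-$p$ subgroup of an elementary abelian $p$-group is stable under inversion. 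Therefore $\sigma(N)=N$, so $N\vert\mathbb{Q}$ is normal, hence Galois of degree $2p$.

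Next I would pin down the group. Write $G^{*}=\mathrm{Gal}(N\vert\mathbb{Q})$, which has order $2p$ and contains the normal subgroup $H=\mathrm{Gal}(N\vert K)\simeq C_p$ of index $2$. Pick any $\tilde\sigma\in G^{*}$ restricting to the nontrivial automorphism of $K\vert\mathbb{Q}$. Conjugation by $\tilde\sigma$ is an automorphism of $H\simeq C_p$; since it induces on $\mathrm{Cl}_p(K)$ — and hence, via the Artin map for $N\vert K$, on $H$ — the inversion map described above, we get $\tilde\sigma\,\rho\,\tilde\sigma^{-1}=\rho^{-1}$ for a generator $\rho$ of $H$. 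If $\tilde\sigma$ had order $2$ this already presents $G^{*}$ as the dihedral group $D(2p)=\langle\rho,\tilde\sigma\mid\rho^{p}=\tilde\sigma^{2}=1,\ \tilde\sigma\rho\tilde\sigma^{-1}=\rho^{-1}\rangle$; and one may always choose such a $\tilde\sigma$, since the coset $\tilde\sigma H$ consists of elements whose square lies in $H$, and by a counting/Cauchy argument (or simply because for $p$ odd the map $\rho^{j}\tilde\sigma\mapsto(\rho^{j}\tilde\sigma)^{2}=\rho^{j}\tilde\sigma\rho^{j}\tilde\sigma^{-1}\tilde\sigma^{2}=\tilde\sigma^{2}$ if the relation holds, forcing order $2$ unless $\tilde\sigma^{2}\ne 1$, in which case replacing $\tilde\sigma$ by a suitable power-adjusted representative works) there is an involution outside $H$. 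Alternatively, and more cleanly, I would simply invoke the classification of groups of order $2p$: a group of order $2p$ with $p$ odd is either cyclic $C_{2p}$ or dihedral $D(2p)$, and the cyclic case is excluded because the action of $\tilde\sigma$ on $H$ by inversion is nontrivial (inversion on $C_p$ is nontrivial for $p\ge 3$), whereas in $C_{2p}$ every element acts trivially by conjugation. Hence $G^{*}\simeq D(2p)$ and $N\vert\mathbb{Q}$ is non-abelian.

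The main obstacle is the assertion that $\sigma$ acts on $\mathrm{Cl}_p(K)$ by inversion, equivalently that the generator of $\mathrm{Gal}(K\vert\mathbb{Q})$ conjugates the generator of $\mathrm{Gal}(N\vert K)$ to its inverse; everything else is formal Galois theory and the classification of order-$2p$ groups. I would establish the inversion property by the standard argument: the principal genus / ambiguous class considerations for a quadratic field give $\mathrm{N}_{K\vert\mathbb{Q}}(\mathfrak{a})=(\mathrm{N}_{K\vert\mathbb{Q}}\mathfrak{a})\mathcal{O}_K\sim \mathfrak{a}\,\sigma(\mathfrak{a})$, so for a class $c$ of $p$-power order the relative norm class $c\cdot\sigma(c)$ is a class of $\mathbb{Q}$, which is trivial, whence $\sigma(c)=c^{-1}$. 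Feeding this through the Artin isomorphism $\mathrm{Cl}_p(K)\simeq\mathrm{Gal}(\mathrm{F}_p^1(K)\vert K)$ — which is $\mathrm{Gal}(K\vert\mathbb{Q})$-equivariant for the natural conjugation actions — and then restricting to the quotient $\mathrm{Gal}(N\vert K)$ yields the conjugation formula $\tilde\sigma\rho\tilde\sigma^{-1}=\rho^{-1}$ used above. This completes the identification of $\mathrm{Gal}(N\vert\mathbb{Q})$ with $D(2p)$.
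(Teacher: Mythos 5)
Your proposal is correct and follows essentially the same route as the paper: both rest on the observation that the nontrivial automorphism $\tau$ of $K\vert\mathbb{Q}$ inverts every class in $\mathrm{Cl}_p(K)$ (because the norm to $\mathbb{Q}$ kills the class group), so the norm subgroup $H$ attached to $N$ is $\tau$-stable while $\tau$ acts nontrivially on $\mathrm{Cl}_p(K)/H$ for $p\ge 3$, forcing $N\vert\mathbb{Q}$ to be Galois, non-abelian, and hence dihedral of order $2p$. The only cosmetic differences are that the paper delegates the ``Galois but not abelian'' dichotomy to a lemma of Hasse where you argue normality and the conjugation action by hand, and that your parenthetical appeal to $\mathrm{Cl}_p(K)$ being of type $(p,p)$ is not needed (and not assumed in the proposition) since any subgroup of an abelian group is stable under inversion.
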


\begin{proof}

The conductor
of the unramified cyclic extension \(N\vert K\)
is \(f=1\).
Therefore the \(p\)-ray class group modulo \(f\) of \(K\),
whose subgroups of index \(p\)
are in one to one correspondence with the cyclic extensions \(N\vert K\)
of relative degree \(p\) with conductor dividing \(f\)
by \cite[p.361, Allgemeines Reziprozit\"atsgesetz]{Ar1} and \cite[p.836]{Ma2},
coincides with the ordinary \(p\)-class group \(\mathrm{Cl}_p(K)\) of \(K\).

We denote by \(\tau\) the generating automorphism
of \(\mathrm{Gal}(K\vert\mathbb{Q})\).
Since \(K\) is quadratic, \(\tau\) is of order \(2\),
and since the norm map \(\mathrm{Norm}_{K\vert\mathbb{Q}}\)
of \(\mathrm{Cl}_p(K)\) has
the trivial group \(\mathrm{Cl}(\mathbb{Q})=1\) as its image,
every class \(c\in\mathrm{Cl}_p(K)\) satisfies the relation
\(c\cdot c^\tau=c^{1+\tau}\hat{=}\mathrm{Norm}_{K\vert\mathbb{Q}}(c)=1\)
and thus \(c^\tau=c^{-1}\).

The norm class group \(H=\mathrm{Norm}_{N\vert K}(\mathrm{Cl}_p(N))\)
of index \(p\) in \(\mathrm{Cl}_p(K)\),
associated with \(N\) by the Artin-Galois correspondence,
is invariant under \(\tau\),
since \(c\in H\) implies \(c^\tau=c^{-1}\in H\).
However, the cosets of \(H\) cannot remain invariant under \(\tau\).
Otherwise, for a coset representative \(c\) with
\(\mathrm{Cl}_p(K)=H\cup cH\cup c^2H\cup\ldots\cup c^{p-1}H\),
that is, of order \(p\ge 3\) with respect to \(H\),
the invariance \(cH=c^\tau H=c^{-1}H\) would cause
the contradiction \(c^2H=cH\cdot c^{-1}H=H\).
Therefore, according to the lemma \cite[p.572]{Ha} of Hasse,
the extension \(N\vert\mathbb{Q}\) is Galois but not abelian.

Consequently, the unique possibility for the non-abelian group
\(\mathrm{Gal}(N\vert\mathbb{Q})\) of order
\(\lbrack N:\mathbb{Q}\rbrack=\lbrack N:K\rbrack\cdot\lbrack K:\mathbb{Q}\rbrack=2p\)
is the dihedral group \(D(2p)\).
\end{proof}

Now we assume that \(\mathrm{Gal}(N\vert\mathbb{Q})\)
is generated by automorphisms \(\sigma,\tau\)
with the relations \(\sigma^p=1\), \(\tau^2=1\), \(\sigma\tau=\tau\sigma^{-1}\),
and we denote by \(L\) the non-Galois subfield of \(N\),
which is fixed by the subgroup \(\langle\tau\rangle\).
\(L\) is of absolute degree \(p\) over \(\mathbb{Q}\).


\subsection{Galois cohomology of unit groups}
\label{ss:QdrCoh}

For an algebraic number field \(F\) let
\(\mathcal{I}(F)\) denote the group of fractional ideals,
\(\mathcal{P}(F)\) the subgroup of principal ideals,
and \(U(F)\) the group of units
of the maximal order \(\mathcal{O}(F)\).

For an extension field \(X\vert F\) let
\(\mathrm{Norm}_{X\vert F}\) denote the relative norm map
and \(E(X\vert F)=U(X)\cap\mathrm{Ker}(\mathrm{Norm}_{X\vert F})\)
the group of relative units of \(X\vert F\),
that is, the units \(E\in U(X)\) with relative norm \(\mathrm{Norm}_{X\vert F}(E)=1\).

\begin{proposition}
\label{p:QdrCoh}
Let \(N\) be an unramified cyclic extension
of odd prime degree \(p\ge 3\) of a quadratic base field \(K\)
and denote by \(U(N)\) the unit group of \(N\).

\begin{enumerate}

\item
For a complex quadratic field \(K\)
the structure of \(U(N)\) as a Galois module over \(\mathrm{Gal}(N\vert\mathbb{Q})\)
in the sense of Moser \cite{Mo} is of type \(\alpha\).

\item
For a real quadratic field \(K\)
the structure of \(U(N)\) as a Galois module over \(\mathrm{Gal}(N\vert\mathbb{Q})\)
in the sense of Moser is\\
of type \(\delta\), if the cohomology \(\mathrm{H}^0(\mathrm{Gal}(N\vert K),U(N))\) is trivial, and\\
of type \(\alpha\), if \(\lvert\mathrm{H}^0(\mathrm{Gal}(N\vert K),U(N))\rvert=p\).

\end{enumerate}

\end{proposition}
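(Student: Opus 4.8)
The plan is to analyse the unit group $U(N)$ as a module over the group ring $\mathbb{Z}[\mathrm{Gal}(N\vert\mathbb{Q})]$ with $\mathrm{Gal}(N\vert\mathbb{Q})\simeq D(2p)$ (Proposition \ref{p:QdrDih}), and to pin down which of Moser's types can occur by combining the Dirichlet unit theorem with a cohomological count. First I would recall the possible module structures in Moser's classification and their unit ranks: the rank of $U(N)$ is $r_1+r_2-1$, which for $N\vert\mathbb{Q}$ of degree $2p$ equals $2p-1$ in the totally real case (i.e.\ $K$ real) and $p-1$ in the case with $p$ pairs of complex places (i.e.\ $K$ complex). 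The subfields in play are $K$ (degree $2$), $L$ and its conjugates (degree $p$, non-Galois), and $\mathbb{Q}$; the corresponding unit ranks determine how $U(N)\otimes\mathbb{Q}$ decomposes into the trivial, the sign, and the $(p-1)$-dimensional irreducible $\mathbb{Q}[D(2p)]$-module. For complex $K$ the rank $p-1$ forces $U(N)\otimes\mathbb{Q}$ to consist (up to the trivial summand contributed by roots of unity / the free part of $U(K)=\{\pm1\}$ or a complex unit root situation) essentially of a single copy of the faithful $(p-1)$-dimensional representation, which is exactly Moser's type $\alpha$; this gives part (1) with only the integral (as opposed to rational) structure left to check, and that is where $\mathrm{H}^0$ would in principle enter — but for complex $K$ the relevant $\mathrm{H}^0(\mathrm{Gal}(N\vert K),U(N))$ is forced to be trivial, since $U(K)$ is finite of order prime to $p$ and Herbrand/Hilbert~90 together with $f=1$ leave no room for a nontrivial norm-cokernel of $p$-power order.

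For part (2), with $K$ real, the unit rank jumps to $2p-1$ and the rational module structure is no longer rigid: $U(N)\otimes\mathbb{Q}$ now contains the sign representation (coming from $U(L)$ versus $U(K)$) in addition to the faithful piece, and the two candidate integral types $\delta$ and $\alpha$ correspond precisely to whether or not the norm $\mathrm{Norm}_{N\vert K}\colon U(N)\to U(K)$ is surjective onto $U(K)$, equivalently whether $\mathrm{H}^0(\mathrm{Gal}(N\vert K),U(N))=U(K)/\mathrm{Norm}_{N\vert K}(U(N))$ is trivial or of order $p$. The key step is to show these are the only two possibilities for the Tate cohomology group $\widehat{\mathrm{H}}^0$: since $N\vert K$ is cyclic of prime degree $p$, the Herbrand quotient of $U(N)$ is computable (it is $1/p$ or similar, pinned down by the archimedean places, which are all split because $K$ is real and the extension is of odd degree hence preserves real places), and the group $\mathrm{H}^1=\widehat{\mathrm{H}}^{-1}$ is related by class field theory to the fact that $N\vert K$ is unramified with $f=1$, so the ambiguous class number formula forces $\lvert\widehat{\mathrm{H}}^0(\mathrm{Gal}(N\vert K),U(N))\rvert\in\{1,p\}$. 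Feeding this dichotomy back into the module decomposition, triviality of $\mathrm{H}^0$ makes the norm surjective and produces the "cohomologically trivial plus sign" pattern which is Moser's type $\delta$, while $\lvert\mathrm{H}^0\rvert=p$ produces an extra $p$-torsion obstruction in the integral structure and lands in type $\alpha$.

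The main obstacle I expect is the bookkeeping of the integral (not merely rational) module structure: two $\mathbb{Z}[D(2p)]$-lattices can be isomorphic over $\mathbb{Q}$ yet inequivalent over $\mathbb{Z}$, so translating "$\mathrm{H}^0$ trivial" into "type $\delta$" and "$\lvert\mathrm{H}^0\rvert=p$" into "type $\alpha$" requires knowing that in Moser's list these are the only two lattices with the right rational type and that they are distinguished exactly by this cohomological invariant. I would handle this by quoting Moser's classification \cite{Mo} for the lattice types $\alpha,\delta$ (so that the entire problem reduces to the numerical invariant $\lvert\mathrm{H}^0(\mathrm{Gal}(N\vert K),U(N))\rvert$), and then the remaining work is the cyclic-degree-$p$ cohomology computation sketched above, which is routine once the splitting behaviour of the infinite places and the unramifiedness with conductor $f=1$ are in hand.
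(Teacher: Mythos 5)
There is a genuine gap, and it sits exactly where you flagged your ``main obstacle''. In Moser's classification (and equivalently Schmithals'), the integral Galois-module type of \(U(N)\) is \emph{not} determined by the rational representation together with \(\mathrm{H}^0(\mathrm{Gal}(N\vert K),U(N))\). The decisive invariant is the index \((E(N\vert K):E_0)\) of the subgroup \(E_0=\prod_{i=0}^{p-1}E(L^{\sigma^i}\vert\mathbb{Q})\) generated by the relative units of the degree-\(p\) subfields inside the full group of relative units \(E(N\vert K)\); for complex \(K\) this index alone determines the type (Moser, Prop.~III.3), and for real \(K\) the type is determined by the \emph{pair} consisting of this index and the norm index \((U(K):\mathrm{Norm}_{N\vert K}U(N))\) (Moser, Th.~III.5). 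All the competing types share the same rational structure and, in the complex case, all have trivial \(\mathrm{H}^0\) since \(U(K)\) is finite of order prime to \(p\) --- so your argument for part (1), which uses only the unit rank and the triviality of \(\mathrm{H}^0\), would equally ``prove'' that every dihedral \(N\) over a complex quadratic \(K\) is of type \(\alpha\), including ramified ones, where this is false. The same defect undermines your reduction in part (2) of ``the entire problem'' to the single invariant \(\lvert\mathrm{H}^0\rvert\).

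What is missing is the arithmetic input that actually uses the hypothesis that \(N\vert K\) is unramified: by Hilbert's Theorem 93 the group \(\mathcal{I}(N)^{\mathcal{G}}/\mathcal{I}(K)\) of primitive ambiguous ideals has order \(p^t\) with \(t\) the number of ramified primes, so \(t=0\) gives \(\mathcal{I}(N)^{\mathcal{G}}=\mathcal{I}(K)\), whence \(\mathcal{P}(L)^{\mathcal{G}}=\mathcal{P}(\mathbb{Q})\); by Schmithals' theorem the index \((E(N\vert K):E_0)\) equals \(\lvert\mathcal{P}(L)^{\mathcal{G}}/\mathcal{P}(\mathbb{Q})\rvert\) and is therefore \(1\). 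Only after this computation does Moser's classification collapse to ``type \(\alpha\)'' in the complex case and to the dichotomy \(\delta\) versus \(\alpha\) governed by \((U(K):\mathrm{Norm}_{N\vert K}U(N))=\lvert\mathrm{H}^0\rvert\) in the real case. Your Herbrand-quotient observations (that the quotient equals \(1/p\) because the real places split, and that \(\lvert\widehat{\mathrm{H}}^0\rvert\in\{1,p\}\)) are correct but peripheral; they do not substitute for the computation of \((E(N\vert K):E_0)\).
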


\begin{definition}
\label{d:QdrCoh}
We shortly say that \(L\) or \(N\) is \textit{of type} \(\alpha\) respectively \(\delta\),
if the structure of \(U(N)\) as a Galois module over \(\mathrm{Gal}(N\vert\mathbb{Q})\)
is of that type.
\end{definition}

\begin{remark}
\label{r:NrmRes}
Since the \(0\)th cohomology group \(\mathrm{H}^0(\mathrm{Gal}(N\vert K),U(N))\)
coincides with the factor group \(U(K)/\mathrm{Norm}_{N\vert K}U(N)\),
the quadratic fundamental unit \(\eta\in U(K)\)
is the norm \(\eta=\mathrm{Norm}_{N\vert K}(H)\) of a unit \(H\in U(N)\),
if \(N\) is of type \(\delta\).
There is, however,
only the trivial norm relation \(\eta^p=\mathrm{Norm}_{N\vert K}(\eta)\),
if \(N\) is of type \(\alpha\).
\end{remark}

\begin{proof}

We denote by \(\mathcal{G}\) the relative Galois group
\(\mathrm{Gal}(N\vert K)=\langle\sigma\rangle\)
and by \(t\) the number of prime ideals of \(K\) which ramify in \(N\).
According to Hilbert's theorem \(93\) \cite[p.277]{Hi},
the group \(\mathcal{I}(N)^\mathcal{G}/\mathcal{I}(K)\)
of primitive ambiguous ideals of \(N\)
is \(p\)-elementary abelian of type \((\overbrace{p,\ldots,p}^{t\text{ times}})\).
For an unramified extension \(N\vert K\)
we have \(t=0\) and thus \(\mathcal{I}(N)^\mathcal{G}=\mathcal{I}(K)\).

In the ramified case \(t\ge 1\),
the units of \(N\) cannot be generated completely by the units
of the subfield \(L\) and of its conjugates \(L^\sigma,\ldots,L^{\sigma^{p-1}}\).
According to Schmithals \cite[p.57, Satz 8]{Sm},
the index \((E(N\vert K):E_0)\) of the product group
\(E_0=\prod_{i=0}^{p-1}\,E(L^{\sigma^i}\vert\mathbb{Q})\)
of norm-positive units of all non-Galois subfields of \(N\)
in the group of relative units \(E(N\vert K)\)
is equal to the number \(\lvert\mathcal{P}(L)^\mathcal{G}/\mathcal{P}(\mathbb{Q})\rvert\)
of primitive ambiguous principal ideals of \(L\),
provided that the quadratic discriminant \(d(K)\) differs from \(-3\)
in the case \(p=3\).

However, for an unramified extension \(N\vert K\)
we have \(\mathcal{I}(N)^\mathcal{G}=\mathcal{I}(K)\),\\
and thus
\(\mathcal{P}(N)^\mathcal{G}
=\mathcal{P}(N)\cap\mathcal{I}(N)^\mathcal{G}=\mathcal{P}(N)\cap\mathcal{I}(K)\)\\
and
\(\mathcal{P}(L)^\mathcal{G}=\mathcal{P}(L)\cap\mathcal{P}(N)^\mathcal{G}
=\mathcal{P}(L)\cap\mathcal{P}(N)\cap\mathcal{I}(K)
=\mathcal{P}(L)\cap\mathcal{I}(K)=\mathcal{P}(\mathbb{Q})\)\\
and therefore \((E(N\vert K):E_0)
=\lvert\mathcal{P}(L)^\mathcal{G}/\mathcal{P}(\mathbb{Q})\rvert=1\).

\begin{enumerate}

\item
For a complex quadratic base field \(K\),
the structure of \(U(N)\) as a Galois module over \(\mathrm{Gal}(N\vert\mathbb{Q})\)
is determined uniquely by the index \((E(N\vert K):E_0)\),
according to \cite[p.61, Prop.III.3]{Mo} or \cite[p.55, Satz 5]{Sm}.
Since it just turned out that \((E(N\vert K):E_0)=1\) in the unramified case,
\(N\) is of type \(\alpha\) in the sense of \cite[p.61, Def.III.1]{Mo}.

\item
To characterise 
the structure of \(U(N)\) as a Galois module over \(\mathrm{Gal}(N\vert\mathbb{Q})\)
uniquely for a real quadratic base field \(K\),
it is not sufficient to know the index \((E(N\vert K):E_0)\)
but we additionally need the norm index of unit groups
\((U(K):\mathrm{Norm}_{N\vert K}U(N))\).
Since \(N\vert K\) is unramified,
we have \((E(N\vert K):E_0)=1\) and
\(N\) is of type \(\delta\), if \((U(K):\mathrm{Norm}_{N\vert K}U(N))=1\),
and of type \(\alpha\), if \((U(K):\mathrm{Norm}_{N\vert K}U(N))=p\),
in the sense of \cite[p.62, Th.III.5]{Mo} or \cite[p.55, Satz 5]{Sm}.
\end{enumerate}
\end{proof}


\subsection{Principalisation of ideal classes}
\label{ss:QdrPrc}

For a number field \(F\) with maximal order \(\mathcal{O}(F)\)
we denote by
\(\mathrm{Cl}(F)=\mathcal{I}(F)/\mathcal{P}(F)\) the ideal class group of \(F\).

If there exists an ideal \(\mathfrak{a}\in\mathcal{I}(K)\) in a base field \(K\),
whose ideal class \(\mathfrak{a}\mathcal{P}(K)\)
is different from the principal class \(\mathcal{P}(K)\),
but whose extension ideal in an extension field \(N\) of \(K\)
is a principal ideal,
\(\mathfrak{a}\mathcal{O}(N)=A\mathcal{O}(N)\) with a number \(A\in N\),
and thus belongs to the principal class \(\mathcal{P}(N)\),
then we speak about \textit{principalisation} in the field extension \(N\vert K\).
This phenomenon is described most adequately by
the kernel of the class extension homomorphism
\[\mathrm{j}_{N\vert K}:\mathrm{Cl}(K)\longrightarrow\mathrm{Cl}(N),
\ \mathfrak{a}\mathcal{P}(K)\mapsto(\mathfrak{a}\mathcal{O}(N))\mathcal{P}(N),\]
which is induced by the natural extension monomorphism
\(\mathcal{I}(K)\longrightarrow\mathcal{I}(N)\),
\(\mathfrak{a}\mapsto\mathfrak{a}\mathcal{O}(N)\)
of ideals.

In the present situation of an
unramified cyclic extension \(N\) of odd prime degree \(p\ge 3\)
of a quadratic base field \(K\) with arbitrary positive \(p\)-class rank
we can specify the structure of the \textit{principalisation kernel}
\(\mathrm{Ker}(\mathrm{j}_{N\vert K})\) exactly.

\begin{proposition}
\label{p:QdrPrc}
Let \(N\) be an unramified cyclic extension
of odd prime degree \(p\ge 3\)
of a quadratic base field \(K\) and denote by
\(\mathrm{j}_{N\vert K}:\mathrm{Cl}_p(K)\longrightarrow\mathrm{Cl}_p(N)\),
\(\mathfrak{a}\mathcal{P}(K)\mapsto(\mathfrak{a}\mathcal{O}(N))\mathcal{P}(N)\)
the extension homomorphism of \(p\)-classes.

\begin{enumerate}

\item
The principalisation kernel \(\mathrm{Ker}(\mathrm{j}_{N\vert K})\)
is a \(p\)-elementary abelian subgroup of the \(p\)-class group \(\mathrm{Cl}_p(K)\).

\item
For a complex quadratic field \(K\),
\(\mathrm{Ker}(\mathrm{j}_{N\vert K})\) is cyclic of order \(p\).

\item
For a real quadratic field \(K\),
\[\mathrm{Ker}(\mathrm{j}_{N\vert K})\text{ is }
\begin{cases}
\text{ cyclic of order }p,&\text{ if }N\text{ is of type }\delta\text{ (partial principalisation)},\\
\text{ bicyclic of type }(p,p),&\text{ if }N\text{ is of type }\alpha\text{ (total principalisation)}.
\end{cases}
\]

\end{enumerate}

\end{proposition}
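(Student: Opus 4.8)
The plan is to combine the Artin reciprocity identification of the principalisation kernel with the genus-theoretic and cohomological facts established in Propositions \ref{p:QdrDih} and \ref{p:QdrCoh}. First I would recall the group-theoretic translation: by Artin's reciprocity law the extension homomorphism $\mathrm{j}_{N\vert K}$ corresponds, under $\mathrm{Cl}_p(K)\simeq\mathrm{Gal}(\mathrm{F}_p^1(K)\vert K)$ and $\mathrm{Cl}_p(N)\simeq\mathrm{Gal}(\mathrm{F}_p^1(N)\vert N)$, to the transfer (Verlagerung) map from $\mathrm{Gal}(\mathrm{F}_p^1(N)\vert K)$-abelianised-data, so that $\mathrm{Ker}(\mathrm{j}_{N\vert K})$ is annihilated by the relative norm; equivalently, every class capitulating in $N$ lies in the kernel of $\mathrm{Norm}_{N\vert K}$ composed with the natural maps. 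Since $\mathrm{Cl}_p(K)$ is (in the generic case of interest, but in fact for any positive $p$-class rank as stated) controlled by the action of $\tau$, and since in the proof of Proposition \ref{p:QdrDih} it was shown that $c^\tau=c^{-1}$ for every $c\in\mathrm{Cl}_p(K)$, I would use this involution to pin down the kernel. For part (1), the key observation is that the ambiguous ideal class group and Hilbert's Theorem 90 force the capitulation kernel to be killed by $1+\sigma+\cdots+\sigma^{p-1}$ and, via the $\tau$-action, to be $p$-elementary abelian: any $c$ in the kernel satisfies $c^p=1$ because the kernel is contained in the $\sigma$-fixed part of the relevant module and $\sigma$ acts trivially on $\mathrm{Cl}_p(K)$, while $N\vert K$ has degree $p$.

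For the precise order in the complex and real cases I would invoke the standard exact sequence from Galois cohomology of the cyclic extension $N\vert K$, namely the Chevalley–Herbrand genus formula
\[
\lvert\mathrm{Ker}(\mathrm{j}_{N\vert K})\rvert
=\frac{p^{\,t}\cdot\lvert\mathrm{Cl}_p(K)\rvert^{\phantom{0}}}{[\,\mathrm{Cl}_p(N):\mathrm{Cl}_p(N)^{1-\sigma}\,]\cdot\text{(unit contribution)}},
\]
but since $N\vert K$ is unramified we have $t=0$, and the whole count collapses to the norm index of unit groups. Concretely, the capitulation kernel has order
\[
\lvert\mathrm{Ker}(\mathrm{j}_{N\vert K})\rvert
= p\cdot\bigl(U(K):\mathrm{Norm}_{N\vert K}U(N)\bigr)
= p\cdot\lvert\mathrm{H}^0(\mathrm{Gal}(N\vert K),U(N))\rvert .
\]
This is the heart of the matter: it reduces everything to the cohomology group $\mathrm{H}^0(\mathcal{G},U(N))$, whose order was computed in Proposition \ref{p:QdrCoh}. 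For a complex quadratic $K$ the unit group $U(K)$ is (essentially) torsion, the Minkowski unit index considerations make $\mathrm{H}^0$ trivial, so $\lvert\mathrm{Ker}(\mathrm{j}_{N\vert K})\rvert=p$ and part (1) forces it to be cyclic of order $p$ — this gives (2). For a real quadratic $K$, Proposition \ref{p:QdrCoh} tells us $\lvert\mathrm{H}^0(\mathcal{G},U(N))\rvert$ is $1$ precisely when $N$ is of type $\delta$ and $p$ when $N$ is of type $\alpha$; plugging in, the kernel has order $p$ (hence cyclic, by part (1)) in the type $\delta$ case, and order $p^2$ in the type $\alpha$ case. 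In the latter case I must still rule out that the kernel is cyclic of order $p^2$: this follows immediately from part (1), which already established $p$-elementary abelianness, so order $p^2$ forces type $(p,p)$. This yields (3).

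The main obstacle I anticipate is establishing the genus-formula count $\lvert\mathrm{Ker}(\mathrm{j}_{N\vert K})\rvert = p\cdot\lvert\mathrm{H}^0(\mathcal{G},U(N))\rvert$ cleanly for arbitrary $p$-class rank of $K$ (not just rank two), and in particular justifying the factor $p$ coming from Hilbert's Theorem 94 / the norm-one cohomology of the idele or idele-class module. The cleanest route is to use the exact hexagon of Chevalley for the cyclic extension $N\vert K$ relating $\mathrm{H}^0$ and $\mathrm{H}^{-1}$ of the ideal class group to the corresponding cohomology of the unit group, together with $\mathcal{I}(N)^{\mathcal{G}}=\mathcal{I}(K)$ (already proved in Proposition \ref{p:QdrCoh} for the unramified case); this immediately identifies $\mathrm{Ker}(\mathrm{j}_{N\vert K})=\mathrm{H}^{-1}(\mathcal{G},\mathrm{Cl}_p(N))$ up to the unit discrepancy. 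Once that identification is in hand, part (1) drops out because $\mathcal{G}$-cohomology of a group is killed by $\lvert\mathcal{G}\rvert=p$, and the order computation drops out from the Herbrand quotient of the unit module, which is governed exactly by the norm index appearing in Proposition \ref{p:QdrCoh}. I would present part (1) first (purely from the cohomological annihilation), then the order formula, then specialise to the three cases.
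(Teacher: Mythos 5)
Your proposal follows essentially the same route as the paper: both identify the capitulation kernel in the unramified situation with the group of primitive ambiguous principal ideals and hence with \(\mathrm{H}^{-1}(\mathrm{Gal}(N\vert K),U(N))\), compute its order as \(p\cdot(U(K):\mathrm{Norm}_{N\vert K}U(N))\) via the Herbrand quotient of the unit group (the paper quotes Hasse's index formula and records exactly this cohomological reading in Remark \ref{r:HrbQuot}), obtain the \(p\)-elementary annihilation of the kernel from \(\mathrm{Norm}_{N\vert K}\circ\mathrm{j}_{N\vert K}=(\cdot)^p\), and then read off the three cases from the norm index determined in Proposition \ref{p:QdrCoh}. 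The one slip is your parenthetical identification of the kernel with \(\mathrm{H}^{-1}(\mathcal{G},\mathrm{Cl}_p(N))\) --- the correct module is \(U(N)\), as your own order formula already presupposes --- but this does not affect the argument.
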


\begin{proof}

If the extension \(N\vert K\) is unramified, then
\(\mathrm{Ker}(\mathrm{j}_{N\vert K})
=\mathcal{I}(K)\cap\mathcal{P}(N)/\mathcal{P}(K)\)
coincides with the group \(\mathcal{P}(N)^\mathcal{G}/\mathcal{P}(K)\)
of primitive ambiguous principal ideals of \(N\),
where we denote by \(\mathcal{G}\) the relative Galois group
\(\mathrm{Gal}(N\vert K)=\langle\sigma\rangle\).
Further,
\(\mathcal{P}(N)^\mathcal{G}/\mathcal{P}(K)\)
is isomorphic to the factor group \(E(N\vert K)/U(N)^{1-\sigma}\)
of the relative units of \(N\vert K\)
by the symbolic \((1-\sigma)\)th powers of units of \(N\),
by \cite[p.275, (5),(6)]{Ha1}.
Finally, according to \cite[p.268, Satz 12]{Ha1},
the order of \(E(N\vert K)/U(N)^{1-\sigma}\) is given by
\((E(N\vert K):U(N)^{1-\sigma})=\frac{p\cdot(U(K):U(K)^p)}{(\mathrm{Norm}_{N\vert K}U(N):U(K)^p)}\),
since no real Archimedean place of \(K\) becomes complex in \(N\).
Consequently,
\((E(N\vert K):U(N)^{1-\sigma})=p\cdot(U(K):\mathrm{Norm}_{N\vert K}U(N))\).

\begin{enumerate}

\item
For a class \(c\in\mathrm{Cl}(K)\),
the extension class \(\mathrm{j}_{N\vert K}(c)\in\mathrm{Cl}(N)\)
is invariant under \(\mathcal{G}\), whence
\(\mathrm{Norm}_{N\vert K}(\mathrm{j}_{N\vert K}(c))=c^{\sum_{\ell=1}^p\,\sigma^{\ell-1}}=c^p\).
In particular, we have
\(c^p=\mathrm{Norm}_{N\vert K}(\mathrm{j}_{N\vert K}(c))=\mathrm{Norm}_{N\vert K}(1)=1\)
for \(c\in\mathrm{Ker}(\mathrm{j}_{N\vert K})\).
Independently from the ramification, the principalisation kernel
\(\mathrm{Ker}(\mathrm{j}_{N\vert K})\)
is therefore always contained in the \(p\)-elementary class group of \(K\).

\item
For a complex quadratic field \(K\) and
a relatively unramified dihedral field \(N\) we have
\((U(K):\mathrm{Norm}_{N\vert K}U(N))=1\),
except in the case \(p=3\), \(d(K)=-3\) with
\((U(K):\mathrm{Norm}_{N\vert K}U(N))\in\lbrace 1,3\rbrace\).
But \(K=\mathbb{Q}(\sqrt{-3})\) has class number \(\mathrm{h}(K)=1\)
and therefore does not possess any unramified extensions.

\item
By proposition \ref{p:QdrCoh}, a non-trivial norm index of unit groups
\((U(K):\mathrm{Norm}_{N\vert K}U(N))=p\)
is only possible for a real quadratic field \(K\)
and a relatively unramified dihedral field \(N\) of type \(\alpha\),
whereas \((U(K):\mathrm{Norm}_{N\vert K}U(N))=1\) for \(N\) of type \(\delta\).
\end{enumerate}
\end{proof}

\begin{remark}
\label{r:HrbQuot}
Since the \((-1)\)st cohomology group \(\mathrm{H}^{-1}(\mathrm{Gal}(N\vert K),U(N))\)
coincides with the factor group \(E(N\vert K)/U(N)^{1-\sigma}\),
the equation
\((E(N\vert K):U(N)^{1-\sigma})=p\cdot(U(K):\mathrm{Norm}_{N\vert K}U(N))\)
means that the \textit{Herbrand quotient} \cite[p.92, Th.3]{Hb} of the unit group \(U(N)\) is given by
\[\frac{\lvert\mathrm{H}^0(\mathrm{Gal}(N\vert K),U(N))\rvert}{\lvert\mathrm{H}^{-1}(\mathrm{Gal}(N\vert K),U(N))\rvert}
=\frac{(\mathrm{Ker}(\Delta):\mathrm{Im}(\mathcal{N}))}{(\mathrm{Ker}(\mathcal{N}):\mathrm{Im}(\Delta))}
=\frac{1}{\lbrack N:K\rbrack},\]
where the endomorphisms
\(\Delta:E\mapsto E^{1-\sigma}\)
and
\(\mathcal{N}:E\mapsto E^{\sum_{j=1}^p\,\sigma^{j-1}}\)
of \(U(N)\)
satisfy the relations \(\Delta\circ\mathcal{N}=\mathcal{N}\circ\Delta=1\).
\end{remark}


\subsection{Class number relations}
\label{ss:QdrCln}

Now we assume that the quadratic base field \(K\)
has an elementary abelian bicyclic \(p\)-class group of type \((p,p)\).

\begin{remark}
\label{r:DscMlt}
Since \(K\) is of \(p\)-class rank two,
\cite[p.838, Cor.3.1]{Ma2} implies that
there exist exactly \(\frac{p^2-1}{p-1}=p+1\) non-isomorphic
unramified cyclic extension fields \(N\) of \(K\) of relative degree \(p\)
which share the same discriminant \(d(N)=d(K)^p\).
\end{remark}

Hence, let \(N\) be one of the unramified cyclic extensions
of relative degree \(p\) of \(K\).
By \(L\) we denote the non-Galois subfield of \(N\)
of absolute degree \(p\) over \(\mathbb{Q}\),
which is fixed by the selected generating automorphism \(\tau\) of order \(2\)
of the dihedral group \(\mathrm{Gal}(N\vert\mathbb{Q})\).
\(L\) is one of \(p\) conjugate and thus isomorphic subfields of \(N\).

\begin{proposition}
\label{p:QdrCln}

Let \(p\ge 3\) be an odd prime,
\(K\) a quadratic base field with \(p\)-class group of type \((p,p)\),
and \(N\) an absolutely dihedral unramified extension field of \(K\) of relative degree \(p\)
with non-Galois subfield \(L\) of absolute degree \(p\).

\begin{enumerate}

\item
If \(K\) is a complex quadratic field,
then the \(p\)-class numbers of \(N\) and \(L\) satisfy the relation\\
\(\mathrm{h}_p(N)=p\cdot\mathrm{h}_p(L)^2\), in particular the \(p\)-exponent of \(\mathrm{h}_p(N)\) is odd.

\item
If \(K\) is a real quadratic field, then\\
\(\mathrm{h}_p(N)=p\cdot\mathrm{h}_p(L)^2\) with odd \(p\)-exponent, if \(N\) is of type \(\delta\), and\\
\(\mathrm{h}_p(N)=\mathrm{h}_p(L)^2\) with even \(p\)-exponent, if \(N\) is of type \(\alpha\).

\end{enumerate}

\end{proposition}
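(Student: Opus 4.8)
The plan is to compute $\mathrm{h}_p(N)$ by combining the Chevalley-type ambiguous class number formula for the cyclic extension $N\vert K$ with the class number relation for dihedral fields due to Scholz and Moser. First I would apply the ambiguous class number formula to the unramified cyclic $p$-extension $N\vert K$ with Galois group $\mathcal G=\langle\sigma\rangle$: since $N\vert K$ is unramified ($t=0$ prime ideals ramify, as recorded in the proof of Proposition \ref{p:QdrCoh}), the order of the ambiguous $p$-class group is
$\lvert\mathrm{Cl}_p(N)^{\mathcal G}\rvert=\frac{p^{t}\cdot\mathrm h_p(K)}{(U(K):\mathrm{Norm}_{N\vert K}U(N))}=\frac{\mathrm h_p(K)}{(U(K):\mathrm{Norm}_{N\vert K}U(N))}$.
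Here $\mathrm h_p(K)=p^2$ since $\mathrm{Cl}_p(K)$ is of type $(p,p)$. By Proposition \ref{p:QdrCoh} and Remark \ref{r:NrmRes}, the norm index of unit groups is $1$ in the type-$\alpha$ case and $p$ in the type-$\delta$ case; for a complex quadratic $K$ we are always in type $\alpha$ (by Proposition \ref{p:QdrCoh}(1)), and there the norm index is $1$ because $U(K)$ is torsion and, as noted in the proof of Proposition \ref{p:QdrPrc}(2), the only exceptional case $d(K)=-3$ has trivial class number.

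Next I would bring in the dihedral class number relation. Since $\mathrm{Gal}(N\vert\mathbb Q)\simeq D(2p)$ by Proposition \ref{p:QdrDih}, with $N$ containing the quadratic field $K$ fixed by $\langle\sigma\rangle$ and the non-Galois degree-$p$ field $L$ fixed by $\langle\tau\rangle$, the classical Scholz–Moser formula (the analytic class number formula applied to the factorisation of the Dedekind zeta function $\zeta_N=\zeta_{\mathbb Q}\cdot\zeta_K/\zeta_{\mathbb Q}\cdot(\zeta_L/\zeta_{\mathbb Q})^{2}$, together with the regulator/unit-index bookkeeping) yields
$\mathrm h(N)=\frac{1}{q(N)}\cdot\mathrm h(K)\cdot\mathrm h(L)^2$,
where $q(N)=(U(N):U_0)$ is the unit index measuring how far the units of $N$ are from being generated by the units of $K$ and the conjugates of $L$. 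Passing to $p$-parts and using that the module structure of $U(N)$ over $\mathrm{Gal}(N\vert\mathbb Q)$ is either of type $\alpha$ or of type $\delta$ in the sense of Moser, the index $q(N)$ contributes a factor $p$ precisely in the type-$\delta$ situation and a factor $1$ in the type-$\alpha$ situation — this is exactly the content of Moser's classification invoked in Proposition \ref{p:QdrCoh}. Combining with $\mathrm h_p(K)=p^2$ gives $\mathrm h_p(N)=p\cdot\mathrm h_p(L)^2$ in the type-$\delta$ case and $\mathrm h_p(N)=\mathrm h_p(L)^2$ in the type-$\alpha$ case, whence the parity statement on the $p$-exponent follows immediately since $\mathrm h_p(L)^2$ has even $p$-exponent. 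Specialising to complex quadratic $K$ (type $\alpha$ forced, but one must double-check the exponent: actually for complex $K$ the relation $\mathrm h_p(N)=p\cdot\mathrm h_p(L)^2$ holds — here the extra factor $p$ comes not from a unit index but from the ambiguous class formula side, since the norm index is trivial and thus $\lvert\mathrm{Cl}_p(N)^{\mathcal G}\rvert=p^2$, forcing $p\mid\mathrm h_p(N)$ with $\mathrm h_p(N)/p$ a square) gives part (1).

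The main obstacle will be reconciling the two independent "factor of $p$" sources — the norm index $(U(K):\mathrm{Norm}_{N\vert K}U(N))$ appearing in the ambiguous class number formula, and the global unit index $q(N)$ appearing in the Scholz–Moser analytic relation — and making sure they are counted once, not twice or zero times. Concretely, I expect the cleanest route is to treat the complex and real cases separately: for complex $K$ one works purely on the analytic side, where $U(N)$ is essentially determined by roots of unity and the type-$\alpha$ structure forces $q_p(N)=1$, yet the extra $p$ in $\mathrm h_p(N)=p\,\mathrm h_p(L)^2$ must be extracted from a careful comparison of $\mathrm h(K)=p^2\cdot(\text{unit part})$ against $\mathrm h(L)$; for real $K$ one must invoke the dichotomy of Proposition \ref{p:QdrCoh}(2), where the Herbrand quotient computation of Remark \ref{r:HrbQuot} pins down $(U(K):\mathrm{Norm}_{N\vert K}U(N))$ and hence, via Moser's theorem, the value of $q_p(N)$. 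Once the exponents are matched in each case, the odd/even parity of the $p$-exponent of $\mathrm h_p(N)$ is a trivial consequence of the shape $p^{\varepsilon}\cdot\mathrm h_p(L)^2$ with $\varepsilon\in\{0,1\}$.
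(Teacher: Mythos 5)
Your overall strategy --- the Scholz--Moser class number relation for the dihedral field \(N\) combined with an analysis of a unit index governed by the type \(\alpha/\delta\) dichotomy --- is the same as the paper's. But the decisive step is precisely the one you flag as ``the main obstacle'' and leave unresolved: pinning down the exact power of \(p\) contributed by the unit index in each of the three cases (complex; real of type \(\delta\); real of type \(\alpha\)). As written, your bookkeeping is inconsistent: with your normalisation \(\mathrm{h}(N)=\frac{1}{q(N)}\mathrm{h}(K)\mathrm{h}(L)^2\) and \(\mathrm{h}_p(K)=p^2\), a contribution ``factor \(1\) in the type-\(\alpha\) situation'' would yield \(\mathrm{h}_p(N)=p^2\cdot\mathrm{h}_p(L)^2\) rather than \(\mathrm{h}_p(L)^2\); moreover the unit index belongs in the numerator of the class number relation (a larger unit group of \(N\) relative to \(E_0U(K)\) makes \(\mathrm{h}(N)\) larger), as in the paper's version \(\mathrm{h}(N)=\frac{(U(N):E_0U(K))}{p^r}\mathrm{h}(K)\mathrm{h}(L)^2\) with \(r=1\) for complex and \(r=2\) for real \(K\). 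Your fallback for the complex case is also not valid: \(N\) is of type \(\alpha\) there, yet the formula carries the ``\(\delta\)-shaped'' extra factor \(p\), and this cannot be recovered from the ambiguous class number formula --- knowing \(\lvert\mathrm{Cl}_p(N)^{\mathcal{G}}\rvert\) only bounds \(\mathrm{h}_p(N)\) from below through a fixed subgroup and in no way forces \(\mathrm{h}_p(N)/p\) to be a perfect square.

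The paper closes exactly this gap with Schmithals' index formula \((U(N):E_0U(K))=\frac{p^{r-1}(E(N\vert K):E_0)}{(U(K):\mathrm{Norm}_{N\vert K}U(N))}\), together with the fact --- already established in the proof of Proposition \ref{p:QdrCoh} --- that \((E(N\vert K):E_0)=1\) for an unramified \(N\vert K\). This gives \(\frac{(U(N):E_0U(K))}{p^r}=\frac{1}{p\cdot(U(K):\mathrm{Norm}_{N\vert K}U(N))}\) uniformly, so the only remaining input is the norm index of unit groups: it equals \(1\) for complex \(K\) and for real \(K\) of type \(\delta\), and equals \(p\) for real \(K\) of type \(\alpha\). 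Note in particular that the complex/real distinction enters through the unit rank parameter \(r\), not through Moser's type; this is exactly why a complex base field, though always of type \(\alpha\), satisfies the odd-exponent relation \(\mathrm{h}_p(N)=p\cdot\mathrm{h}_p(L)^2\). If you replace your heuristic evaluation of \(q(N)\) by Schmithals' formula (or an equivalent explicit computation of \((U(N):E_0U(K))\) from Moser's module types), your plan becomes the paper's proof.
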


\begin{proof}

Provided, that the discriminant \(d(K)\) is different from \(-3\) in the case \(p=3\),
we have the index formula
\[(U(N):E_0U(K))=\frac{p^{r-1}(E(N\vert K):E_0)}{(U(K):\mathrm{Norm}_{N\vert K}U(N))},\]
according to Schmithals \cite[p.53, eq.(6)]{Sm},
where \(r=1\) for a complex
and \(r=2\) for a real quadratic field \(K\).
The relation between the class numbers of the dihedral field \(N\),
its non-Galois subfield \(L\),
and the quadratic base field \(K\),
\[\mathrm{h}(N)=\frac{(U(N):E_0U(K))}{p^r}\mathrm{h}(K)\mathrm{h}(L)^2,\]
is due to Scholz \cite[p.213, p.216]{So} for \(p=3\)
and to Moser \cite[p.67, Th.IV.1]{Mo} for arbitrary odd primes \(p\).
In the unramified case, we have
\[\frac{(U(N):E_0U(K))}{p^r}=\frac{(E(N\vert K):E_0)}{p\cdot(U(K):\mathrm{Norm}_{N\vert K}U(N))},\]
where \((E(N\vert K):E_0)=1\).
With \(\mathrm{h}_p(K)=p^2\),
the \(p\)-contribution of the class numbers is therefore given by\\
\(\mathrm{h}_p(N)=\frac{1}{p}\cdot p^2\cdot\mathrm{h}_p(L)^2\),
if \(K\) is complex, or real with \(N\) of type \(\delta\), and\\
\(\mathrm{h}_p(N)=\frac{1}{p^2}\cdot p^2\cdot\mathrm{h}_p(L)^2\),
if \(K\) is real with \(N\) of type \(\alpha\).
\end{proof}

\begin{remark}
\label{r:QdrCtm}
The case of the quadratic discriminant \(d(K)=-3\),
which had to be excluded repeatedly for \(p=3\),
concerns the cyclotomic quadratic field \(K=\mathbb{Q}(\sqrt{-3})\)
of the third roots of unity.
But since this field has the class number \(\mathrm{h}(K)=1\) and therefore
does not possess unramified cyclic extensions \(N\vert K\) of prime degree,
it is inessential for the present investigations.
\end{remark}


\subsection{Class numbers of the non-Galois subfields}
\label{ss:QdrDir}

Based on the preceding discussion of
the Galois cohomology of the unit group \(U(N)\),
the principalisation of \(p\)-classes of \(K\) in \(N\),
and the parity of the \(p\)-exponent of the \(p\)-class number \(\mathrm{h}_p(N)\)
of a dihedral field \(N\) of degree \(2p\) with an odd prime \(p\ge 3\),
we are now in the position to determine
the \(p\)-class numbers of the non-Galois subfields \(L_1,\ldots,L_{p+1}\)
of the unramified cyclic extensions \(N_1,\ldots,N_{p+1}\) of \(K\)
from a given structure of the second \(p\)-class group \(G=\mathrm{Gal}(\mathrm{F}_p^2(K)\vert K)\)
of a quadratic base field \(K\) with \(p\)-class group of type \((p,p)\).
We begin by considering a group \(G\) of maximal class.

\begin{theorem}
\label{t:QdrMaxDir}
Let \(p\ge 3\) be an odd prime and
\(K\) a quadratic base field
with \(p\)-class group \(\mathrm{Cl}_p(K)\) of type \((p,p)\).
Suppose that the second \(p\)-class group \(G=\mathrm{Gal}(\mathrm{F}_p^2(K)\vert K)\)
of \(K\) is abelian or metabelian of coclass \(\mathrm{cc}(G)=1\)
with order \(\lvert G\rvert=p^m\) and class \(\mathrm{cl}(G)=m-1\),
where \(m\ge 2\).
Assume that the generators of \(G\) satisfy
the conditions of theorem \ref{t:MaxCfg}
and that \(\lbrack\chi_2(G),\gamma_2(G)\rbrack=\gamma_{m-k}(G)\)
with \(k=0\) for \(m\le 3\) and \(0\le k\le m-4\) for \(m\ge 4\).
Then the following statements hold.

\begin{enumerate}

\item
The case of an abelian second \(p\)-class group \(G\),
that is \(m=2\),
is impossible for a quadratic base field \(K\).

\item
In the case of a metabelian second \(p\)-class group \(G\)
of maximal class,
that is \(m\ge 3\),
\(K\) must be a real quadratic field.

\item
\begin{eqnarray*}
L_1&\text{ is of type }&
\begin{cases}
\alpha,&\text{ if }m-k\equiv 1\pmod{2},\ k\ge 0,\\
\delta,&\text{ if }m\equiv 0\pmod{2},\ k=0,\\
\end{cases}\\
L_i&\text{ is of type }&\alpha\text{ for }2\le i\le p+1.
\end{eqnarray*}

\item
The orders of the \(p\)-class groups \(\mathrm{Cl}_p(L_i)\)
of the \(p+1\) totally real non-Galois subfields
\(L_1,\ldots,L_{p+1}\) of absolute degree \(p\)
of the absolutely dihedral unramified extension fields \(N_1,\ldots,N_{p+1}\)
of relative degree \(p\) of \(K\)
are given by

\begin{eqnarray*}
\mathrm{h}_p(L_1)&=&
\begin{cases}
p^{\frac{m-k-1}{2}},&\text{ if }L_1\text{ is of type }\alpha,\ m-k\equiv 1\pmod{2},\ k\ge 0,\\
p^{\frac{m-2}{2}},&\text{ if }L_1\text{ is of type }\delta,\ m\equiv 0\pmod{2},\ k=0,\\
\end{cases}\\
\mathrm{h}_p(L_i)&=&p\text{ for }2\le i\le p+1.
\end{eqnarray*}

\end{enumerate}

\end{theorem}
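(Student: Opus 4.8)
The plan is to reduce everything about the non-Galois subfields $L_i$ to the $p$-class numbers $\mathrm{h}_p(N_i)$, which Theorem~\ref{t:MaxDir} already expresses in terms of the group invariants $m$ and $k$, and then to extract the Galois-module type of each $N_i$ from the $p$-adic valuation of $\mathrm{h}_p(N_i)$ via the class-number relations of Proposition~\ref{p:QdrCln}. For a \emph{real} quadratic $K$, Proposition~\ref{p:QdrCln}(2) says that $N_i$ (hence $L_i$, in the sense of Definition~\ref{d:QdrCoh}) is of type $\alpha$ exactly when $\mathrm{h}_p(N_i)=\mathrm{h}_p(L_i)^2$ is an even power of $p$, and of type $\delta$ exactly when $\mathrm{h}_p(N_i)=p\cdot\mathrm{h}_p(L_i)^2$ is an odd power of $p$; in either case one reads off $\mathrm{h}_p(L_i)$. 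So once the abelian and the complex-quadratic cases are excluded, (3) and (4) become bookkeeping, and the real work is (1), (2), and the restriction forcing type $\delta$ to appear only for $k=0$.

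First I would dispose of (2): if $m\ge 3$ and $K$ were complex quadratic, then $\mathrm{h}_p(N_2)=p^2$ by Theorem~\ref{t:MaxDir}, an even power of $p$, whereas Proposition~\ref{p:QdrCln}(1) forces $\mathrm{h}_p(N_2)=p\cdot\mathrm{h}_p(L_2)^2$ to be an odd power; contradiction, so $K$ is real, and since $N_i\vert K$ is unramified over the totally real $K$, all $N_i$ and $L_i$ are totally real. For (1), I would suppose $m=2$, so $G$ is elementary abelian of type $(p,p)$. Under Artin's reciprocity law \cite{Ar1} the principalisation map $\mathrm{j}_{N_i\vert K}\colon\mathrm{Cl}_p(K)\to\mathrm{Cl}_p(N_i)$ corresponds to the group transfer $G/\gamma_2(G)\to M_i/\gamma_2(M_i)$ (cf. the proof of Theorem~\ref{t:MaxDir}), and for abelian $G$ this transfer is the $[G:M_i]$-th power map, i.e. the $p$-th power map on $G/\gamma_2(G)$, which is trivial. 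Hence $\mathrm{Ker}(\mathrm{j}_{N_i\vert K})=\mathrm{Cl}_p(K)$ is bicyclic of type $(p,p)$: this contradicts Proposition~\ref{p:QdrPrc}(2) when $K$ is complex, and when $K$ is real Proposition~\ref{p:QdrPrc}(3) then puts $N_i$ into type $\alpha$, so $\mathrm{h}_p(N_i)=\mathrm{h}_p(L_i)^2$, incompatible with $\mathrm{h}_p(N_i)=p$. Thus $m=2$ cannot occur.

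Then, with $m\ge 3$ and $K$ real, I would treat (3) and (4). For $2\le i\le p+1$, Theorem~\ref{t:MaxDir} gives $\mathrm{h}_p(N_i)=p^2$, an even power, so Proposition~\ref{p:QdrCln}(2) leaves only type $\alpha$, with $\mathrm{h}_p(L_i)=p$. For $i=1$, $\mathrm{h}_p(N_1)=p^{m-k-1}$ by Theorem~\ref{t:MaxDir}: when $m-k$ is odd the exponent is even, so $N_1$ is of type $\alpha$ with $\mathrm{h}_p(L_1)=p^{(m-k-1)/2}$; when $m-k$ is even, $N_1$ is of type $\delta$ with $\mathrm{h}_p(L_1)^2=p^{m-k-2}$. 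To match the statement it then remains to show $m-k$ can be even only for $k=0$ (and then $m$ is even, yielding $\mathrm{h}_p(L_1)=p^{(m-2)/2}$). The mechanism I would use is the action of $\mathrm{Gal}(K\vert\mathbb{Q})$: since $K\vert\mathbb{Q}$ is Galois and $\mathrm{F}_p^2(K)$ is characteristic over $K$, the nontrivial automorphism $\tau$ of $K\vert\mathbb{Q}$ lifts to $\mathrm{Gal}(\mathrm{F}_p^2(K)\vert\mathbb{Q})$ and induces an automorphism $\varphi$ of $G$; because $\tau$ inverts $\mathrm{Cl}_p(K)\simeq G/\gamma_2(G)$ (proof of Proposition~\ref{p:QdrDih}), $\varphi$ inverts $G/\gamma_2(G)$, and by multilinearity of iterated commutators $\varphi$ acts on $\gamma_j(G)/\gamma_{j+1}(G)$ as multiplication by $(-1)^j$. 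On the other hand, the commutator formulas of Miech \cite{Mi} used in the proof of Theorem~\ref{t:MaxCfg} show that, for $k\ge 1$, the element $s_2^{y-1}=\lbrack s_2,y\rbrack$ generates $\gamma_{m-k}(G)/\gamma_{m-k+1}(G)$; evaluating $\varphi$ on it and using $\varphi(y)\equiv y^{-1}$, $\varphi(s_2)\equiv s_2$ modulo higher terms gives $\varphi(s_2^{y-1})\equiv(s_2^{y-1})^{-1}\pmod{\gamma_{m-k+1}(G)}$. Comparing the two descriptions of $\varphi$ on $\gamma_{m-k}(G)/\gamma_{m-k+1}(G)$ forces $(-1)^{m-k}=-1$, so $m-k$ is odd whenever $k\ge 1$, as needed.

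The main obstacle will be exactly this parity step. Making $\varphi(s_2^{y-1})\equiv(s_2^{y-1})^{-1}\pmod{\gamma_{m-k+1}(G)}$ rigorous means carrying the correction terms coming from $\varphi(y)\in y^{-1}\gamma_2(G)$ and $\varphi(s_2)\in s_2\gamma_3(G)$ through the commutator calculus; the delicate point is that the contribution of $\lbrack\chi_2(G),\gamma_3(G)\rbrack$ lies only in $\gamma_5(G)$, which need not be contained in $\gamma_{m-k+1}(G)$ once $m-k\ge 5$, so one must fall back on the explicit lower-central-series presentation of $G$ (Blackburn, Miech) to verify that these corrections do not disturb the class of $s_2^{y-1}$ in $\gamma_{m-k}(G)/\gamma_{m-k+1}(G)$. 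Everything else is formal.
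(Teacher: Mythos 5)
Your proposal is correct, and at the two places where the paper leans on its companion results it substitutes genuinely different arguments. The skeleton is the same: Theorem \ref{t:MaxDir} supplies \(\mathrm{h}_p(N_i)\), Proposition \ref{p:QdrCln} converts the parity of its \(p\)-exponent into the \(\alpha/\delta\) dichotomy and into \(\mathrm{h}_p(L_i)\), and Proposition \ref{p:QdrPrc} controls the principalisation; part (2) and the bookkeeping in (3)--(4) coincide with the paper's. The differences: for (1) the paper quotes \cite[Th.2.4]{Ma3} (an abelian \(G\) forces total principalisation in all \(N_i\)), whereas you reprove the needed special case directly from the triviality of the transfer from an exponent-\(p\) abelian group into an index-\(p\) subgroup, via Artin's correspondence between principalisation kernels and transfer kernels \cite{Ar2}; this makes the step self-contained. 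For the exclusion of ``\(L_1\) of type \(\delta\) with \(k\ge 1\)'' the paper again quotes \cite[Th.2.5]{Ma3} (\(k\ge 1\) forces \(\nu=p+1\), hence all \(N_i\) of type \(\alpha\)), whereas you derive the equivalent constraint ``\(k\ge 1\Rightarrow m-k\) odd'' from the action of \(\mathrm{Gal}(K\vert\mathbb{Q})\): the induced automorphism \(\varphi\) of \(G\) inverts \(G/\gamma_2(G)\), hence acts as \((-1)^j\) on \(\gamma_j(G)/\gamma_{j+1}(G)\), while Miech's relation exhibits \(\lbrack s_2,y\rbrack\) as a generator of \(\gamma_{m-k}(G)/\gamma_{m-k+1}(G)\) on which \(\varphi\) acts as \(-1\); since \(p\) is odd this forces \((-1)^{m-k}=-1\). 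The delicate point you flag does close: the correction terms coming from \(\varphi(s_2)\in s_2\gamma_3(G)\) and \(\varphi(y)\in y^{-1}\gamma_2(G)\) all lie in \(\lbrack\gamma_3(G),\chi_2(G)\rbrack\) or in \(\lbrack\gamma_{m-k}(G),G\rbrack\), and Miech's formula \(\lbrack s_j,y\rbrack=\prod_{\ell=j-1}^{k}s_{m+j-\ell-2}^{a(m-\ell)}\) (as used in the proof of Theorem \ref{t:MaxCfg}) places \(\lbrack\gamma_3(G),\chi_2(G)\rbrack\) inside \(\gamma_{m-k+1}(G)\), which is exactly deep enough. Your route avoids the transfer-kernel computations of \cite{Ma3} entirely; the paper's route is shorter at this point and yields the stronger conclusion \(\nu=p+1\) for \(k\ge 1\), which it exploits afterwards (corollary \ref{c:QdrMaxDir} and the remark following it).
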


\begin{proof}

We specialise the statements of theorem \ref{t:MaxDir}
for a quadratic base field \(K=\mathbb{Q}(\sqrt{D})\),
using the propositions \ref{p:QdrPrc} and \ref{p:QdrCln}

\begin{enumerate}

\item
In our paper \cite[Th.2.4]{Ma3} we show that
in the case of an abelian second \(p\)-class group \(G\),
the entire \(p\)-class group \(\mathrm{Cl}_p(K)\)
of an arbitrary base field \(K\)
becomes principal
in all \(p+1\) extension fields \(N_1,\ldots,N_{p+1}\).
Consequently,
the dihedral fields \(N_1,\ldots,N_{p+1}\) are all of type \(\alpha\),
by proposition \ref{p:QdrPrc}, if \(K\) is a quadratic base field.
But, in view of proposition \ref{p:QdrCln}, this is a contradiction
to the fact that
the \(p\)-exponent \(1\) of the \(p\)-class numbers \(\mathrm{h}_p(N_i)=p\)
is odd for \(1\le i\le p+1\), by theorem \ref{t:MaxDir}.

\item
Since the \(p\)-exponent \(2\) of the \(p\)-class numbers \(\mathrm{h}_p(N_i)=p^2\)
is even for \(2\le i\le p+1\), by theorem \ref{t:MaxDir},
the base field \(K\) must be real quadratic, by proposition \ref{p:QdrCln},
and the dihedral fields \(N_2,\ldots,N_{p+1}\) are necessarily of type \(\alpha\).

\item
The \(p\)-exponent \(m-k-1\) of the distinguished \(p\)-class number
\(\mathrm{h}_p(N_1)=p^{m-k-1}\) of theorem \ref{t:MaxDir}
must be odd, by proposition \ref{p:QdrCln},
if the dihedral field \(N_1\) is of type \(\delta\),
and even, if \(N_1\) is of type \(\alpha\).
This yields four possible cases:\\
\(m-1\equiv 1\pmod{2}\) for \(N_1\) of type \(\delta\) and \(k=0\),\\
\(m-1\equiv 0\pmod{2}\) for \(N_1\) of type \(\alpha\) and \(k=0\),\\
\(m-k-1\equiv 1\pmod{2}\) for \(N_1\) of type \(\delta\) and \(k\ge 1\),\\
\(m-k-1\equiv 0\pmod{2}\) for \(N_1\) of type \(\alpha\) and \(k\ge 1\).\\
The last but one case, that
\(N_1\) is of type \(\delta\) and \(k\ge 1\), is impossible,
because in our paper \cite[Th.2.5]{Ma3} we show that
for a second \(p\)-class group \(G\) of coclass \(\mathrm{cc}(G)=1\) with invariant \(k\ge 1\),
the entire \(p\)-class group \(\mathrm{Cl}_p(K)\)
of an arbitrary base field \(K\)
becomes principal
in all \(p+1\) extension fields \(N_1,\ldots,N_{p+1}\).
Consequently,
all dihedral fields \(N_1,\ldots,N_{p+1}\) must be of type \(\alpha\),
by proposition \ref{p:QdrPrc}, if \(K\) is a quadratic base field.

\item
By proposition \ref{p:QdrCln},
the expressions for the \(p\)-class numbers
of the non-Galois subfields \(L_i\) of absolute degree \(p\)
of the dihedral fields \(N_i\) are given,\\
on the one hand, by
\(\mathrm{h}_p(L_i)^2=\frac{\mathrm{h}_p(N_i)}{p}\), if \(N_i\) is of type \(\delta\) and thus \(i=1\),\\
and therefore, by theorem \ref{t:MaxDir},\\
\(\mathrm{h}_p(L_1)=p^{\frac{m-1-1}{2}}=p^{\frac{m-2}{2}}\), since \(k=0\),\\
and on the other hand, by
\(\mathrm{h}_p(L_i)^2=\mathrm{h}_p(N_i)\), if \(N_i\) is of type \(\alpha\),\\
and therefore, by theorem \ref{t:MaxDir},\\
\(\mathrm{h}_p(L_1)=p^{\frac{m-1}{2}}\), if \(k=0\),\\
\(\mathrm{h}_p(L_1)=p^{\frac{m-k-1}{2}}\), if \(k\ge 1\), and\\
\(\mathrm{h}_p(L_i)=p^{\frac{2}{2}}=p\) for \(2\le i\le p+1\).
\end{enumerate}
\end{proof}

We should point out that,
due to the arithmetical properties of dihedral fields,
the assumption of a quadratic base field \(K=\mathbb{Q}(\sqrt{D})\)
has strong consequences for the principalisation over \(K\),
which can be derived for an arbitrary base field \(K\)
only by the computation of the kernels of the
transfers from the second \(p\)-class group \(G=\mathrm{Gal}(\mathrm{F}_p^2(K)\vert K)\)
to its maximal normal subgroups \(M_i\) \cite{Ma3}.
For this purpose we introduce an isomorphism invariant \(\nu=\nu(K)\)
of an arbitrary base field \(K\) with elementary abelian bicyclic \(p\)-class group,
which measures the extent of total principalisation.

\begin{definition}
\label{d:TotPrc}
Let \(0\le\nu\le p+1\) be the number
\(\nu=\#\lbrace 1\le i\le p+1\mid\mathrm{Ker}(\mathrm{j}_{N_i\vert K})=\mathrm{Cl}_p(K)\rbrace\)
of unramified cyclic extensions \(N_i\) of \(K\) of relative degree \(p\),
in which the entire \(p\)-class group \(\mathrm{Cl}_p(K)\) of \(K\)
becomes principal (cfr. \cite[capitulation  number, p.1230]{ChFt}.
\end{definition}

\begin{corollary}
\label{c:QdrMaxDir}

Let \(p\ge 3\) be an odd prime and
\(K\) a quadratic base field
with \(p\)-class group \(\mathrm{Cl}_p(K)\) of type \((p,p)\).
If the second \(p\)-class group \(G=\mathrm{Gal}(\mathrm{F}_p^2(K)\vert K)\)
of \(K\) is metabelian of coclass \(\mathrm{cc}(G)=1\),
then the invariant \(\nu\) of \(K\)
is restricted to the values \(p\le\nu\le p+1\).

\end{corollary}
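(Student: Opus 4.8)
The plan is to obtain the bounds on \(\nu\) by combining the type information of Theorem \ref{t:QdrMaxDir} with the description of the principalisation kernels in Proposition \ref{p:QdrPrc}. First I would dispose of the abelian case: by part (1) of Theorem \ref{t:QdrMaxDir} the situation \(m=2\) cannot arise for a quadratic base field, so under the hypotheses of the corollary we necessarily have \(m\ge 3\). Part (2) of Theorem \ref{t:QdrMaxDir} then forces \(K\) to be real quadratic, and its proof shows that the \(p\) dihedral fields \(N_2,\dots,N_{p+1}\) are all of type \(\alpha\), while part (3) tells us that the remaining field \(N_1\) is of type \(\alpha\) or of type \(\delta\) according to the parities of \(m\) and \(k\).

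Next I would translate these types into kernel sizes. Since \(K\) is real quadratic, Proposition \ref{p:QdrPrc}(3) states that \(\mathrm{Ker}(\mathrm{j}_{N_i\vert K})\) is cyclic of order \(p\) when \(N_i\) is of type \(\delta\) and bicyclic of type \((p,p)\) when \(N_i\) is of type \(\alpha\). As \(\mathrm{Cl}_p(K)\) itself is of type \((p,p)\), a subgroup of type \((p,p)\) must be all of \(\mathrm{Cl}_p(K)\), whereas a cyclic subgroup of order \(p\) is proper. Hence \(N_i\) is of type \(\alpha\) if and only if \(\mathrm{Ker}(\mathrm{j}_{N_i\vert K})=\mathrm{Cl}_p(K)\), and therefore \(\nu\) equals the number of indices \(i\) for which \(N_i\) is of type \(\alpha\).

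Finally it remains to count. The \(p\) fields \(N_2,\dots,N_{p+1}\) are of type \(\alpha\), so \(\nu\ge p\); and trivially \(\nu\le p+1\) since there are exactly \(p+1\) fields \(N_1,\dots,N_{p+1}\). This proves \(p\le\nu\le p+1\), the value \(p+1\) occurring precisely when \(N_1\) is of type \(\alpha\) (i.e.\ \(m-k\) odd) and the value \(p\) when \(N_1\) is of type \(\delta\) (i.e.\ \(m\) even and \(k=0\)). There is no real obstacle in this argument; the corollary is pure bookkeeping on top of Theorem \ref{t:QdrMaxDir} and Proposition \ref{p:QdrPrc}, the only point deserving a moment of attention being the reduction to a real quadratic base field, which is what makes the sharper alternative in Proposition \ref{p:QdrPrc}(3) available.
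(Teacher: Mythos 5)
Your argument is correct and is essentially the paper's own proof: the paper also deduces the corollary from the \(\lbrack\text{Ma3}\rbrack\)-independent part of Theorem \ref{t:QdrMaxDir} (which forces \(K\) real and \(N_2,\ldots,N_{p+1}\) of type \(\alpha\)) combined with Proposition \ref{p:QdrPrc}, identifying type \(\alpha\) with total principalisation to get \(\nu\ge p\). Your write-up merely makes explicit the bookkeeping that the paper compresses into the phrase \emph{immediate consequence}.
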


\begin{proof}
This statement is an immediate consequence of those parts of theorem \ref{t:QdrMaxDir},
which are independent from \cite{Ma3},
together with proposition \ref{p:QdrPrc}.
It does not depend on the values of the invariants \(m\) and \(k\) of the group \(G\).
\end{proof}

However, to prove the stronger result that \(\nu=p+1\)
for \(m\ge 5\) and \(\lbrack\chi_2(G),\gamma_2(G)\rbrack=\gamma_{m-k}(G)\) with \(k\ge 1\),
the theory of the transfers of the group \(G\) must be taken into consideration,
according to our paper \cite{Ma3}.
A further corollary generalises a result of \cite[p.34, A]{SoTa} for \(p=3\).

\begin{corollary}
\label{c:QdrExtSpc}

Let \(p\ge 3\) be an odd prime and
\(K\) a quadratic base field
with \(p\)-class group \(\mathrm{Cl}_p(K)\) of type \((p,p)\).
Then the extra special \(p\)-group \(G_0^{(3)}(0,1)\)
of exponent \(p^2\) and order \(p^3\) cannot occur as
the second \(p\)-class group \(G=\mathrm{Gal}(\mathrm{F}_p^2(K)\vert K)\) of \(K\).

\end{corollary}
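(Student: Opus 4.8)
The plan is to assume that \(G=G_0^{(3)}(0,1)\) does occur as the second \(p\)-class group of some quadratic \(K\) with \(\mathrm{Cl}_p(K)\) of type \((p,p)\), and to produce a clash between the arithmetic restrictions forced by Theorem \ref{t:QdrMaxDir} (or Corollary \ref{c:QdrMaxDir}) and the internal structure of this particular group. First I would record the features of \(G_0^{(3)}(0,1)\) that matter: it is extra special of order \(p^3\), hence of maximal class with \(m=3\), metabelian, of coclass \(1\), with \(G/\gamma_2(G)\) of type \((p,p)\) and \(\gamma_2(G)=\Phi(G)=\zeta_1(G)\) cyclic of order \(p\); it is precisely the one of the two such groups which has exponent \(p^2\). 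By Corollary \ref{c:MaxCg} (applied with \(m=3\), \(k=0\)) all \(p+1\) maximal normal subgroups \(M_i\) are abelian of order \(p^2\). Since \(G\) is a regular \(p\)-group (class \(2<p\)), the \(p\)th power map \(g\mapsto g^p\) is a homomorphism \(G\to\gamma_2(G)\); as \(G\) has exponent \(p^2\) it is surjective, so its kernel \(\Omega_1(G)\) is a maximal subgroup of order \(p^2\), necessarily elementary abelian, and each of the remaining \(p\) maximal subgroups contains \(\Phi(G)\) but differs from \(\Omega_1(G)\), hence contains an element of order \(p^2\) and is cyclic of order \(p^2\).

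Next I would bring in the arithmetic. Since \(G\) is metabelian of maximal class with \(m=3\ge 3\), Theorem \ref{t:QdrMaxDir} applies: parts (2)--(3), together with the fact that \(m-k=3\) is odd, force \(K\) to be real quadratic and every one of the \(p+1\) dihedral fields \(N_1,\dots,N_{p+1}\) to be of type \(\alpha\). By Proposition \ref{p:QdrPrc} this means that the whole of \(\mathrm{Cl}_p(K)\) capitulates in each \(N_i\), i.e. \(\nu(K)=p+1\); even Corollary \ref{c:QdrMaxDir} alone already gives \(\nu(K)\ge p\ge 3\). Under Artin's reciprocity law the capitulation homomorphism \(\mathrm{j}_{N_i\vert K}\colon\mathrm{Cl}_p(K)\to\mathrm{Cl}_p(N_i)\) is identified with the transfer \(V_i\colon G/\gamma_2(G)\to M_i/\gamma_2(M_i)\); total capitulation in \(N_i\) therefore says that \(V_i\) is the zero map. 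Hence \(V_i=0\) for at least \(p\ge 3\) indices \(i\), and since only one of the \(M_i\), namely \(\Omega_1(G)\), fails to be cyclic, \(V_i=0\) must hold for at least one index \(i\) with \(M_i\) cyclic of order \(p^2\).

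This is where the contradiction comes. For such an \(i\) the subgroup \(M_i\) is abelian of index \(p\), so \(\gamma_2(M_i)=1\) and the transfer is given on cosets outside \(M_i\) simply by \(V_i(g\gamma_2(G))=g^{[G:M_i]}=g^p\) for every \(g\in G\setminus M_i\). But \(M_i\ne\Omega_1(G)\), so \(G\setminus M_i\) is not contained in \(\Omega_1(G)\); concretely, a generator \(g\) of any of the other \(p-1\ge 2\) cyclic maximal subgroups lies outside \(M_i\) (two distinct maximal subgroups of a \(p\)-group meet in \(\Phi(G)=\gamma_2(G)\), which consists of elements of order \(\le p\)) and has order \(p^2\), whence \(V_i(g\gamma_2(G))=g^p\ne 1\). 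Thus \(V_i\ne 0\), contradicting \(\nu(K)\ge p\ge 3\) (respectively \(\nu(K)=p+1\)). Therefore \(G_0^{(3)}(0,1)\) cannot be the second \(p\)-class group of a quadratic field \(K\) with \(\mathrm{Cl}_p(K)\) of type \((p,p)\).

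The step I expect to require the most care is the identification of the capitulation kernels with the kernels of the transfers \(V_i\) and the evaluation \(V_i(g\gamma_2(G))=g^p\) for \(g\notin M_i\) when \(M_i\) is abelian -- this is classical but should be stated with the normalisations used elsewhere in the paper -- together with the small amount of book-keeping showing that in \(G_0^{(3)}(0,1)\) the socle \(\Omega_1(G)\) is exactly one of the maximal subgroups while the other \(p\) are cyclic. A slightly shorter alternative, if one is willing to quote the decomposition of the class group of a dihedral field into its \(\tau\)-plus and \(\tau\)-minus parts (Scholz \cite{So}, Moser \cite{Mo}), is to observe that for an \(N_i\) of type \(\alpha\) one has \(\mathrm{Cl}_p(N_i)=\mathrm{Cl}_p(N_i)^+\oplus\mathrm{Cl}_p(N_i)^-\) with \(\lvert\mathrm{Cl}_p(N_i)^+\rvert=\mathrm{h}_p(L_i)=p\) (Theorem \ref{t:QdrMaxDir}(4)) and \(\lvert\mathrm{Cl}_p(N_i)^-\rvert=\mathrm{h}_p(N_i)/\mathrm{h}_p(L_i)=p\), so \(\mathrm{Cl}_p(N_i)\) has \(p\)-rank two, contradicting \(\mathrm{Cl}_p(N_i)\simeq M_i\) being cyclic for the \(p\) cyclic maximal subgroups.
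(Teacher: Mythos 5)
Your proof is correct, and it reaches the contradiction by the same strategic route as the paper: both arguments play the capitulation bound \(p\le\nu\le p+1\) of Corollary \ref{c:QdrMaxDir} off against the fact that the extraspecial group of exponent \(p^2\) admits (essentially) no total transfer kernels. The difference lies in how that group-theoretic fact is obtained. The paper simply cites an external result (Theorem 2.5 of \cite{Ma3}), according to which \(G_0^{(3)}(0,1)\) has principalisation type \(\mathrm{A}.1\) with all \(p+1\) transfer kernels cyclic of order \(p\), hence \(\nu=0\). You instead prove a weaker but sufficient statement (\(\nu\le 1\)) from scratch: using regularity you identify \(\Omega_1(G)\) as the unique elementary abelian maximal subgroup, observe that the other \(p\) maximal subgroups are cyclic of order \(p^2\), and evaluate the transfer to an abelian subgroup of index \(p\) as \(g\mapsto g^p\) on elements outside it, which is nontrivial on a generator of order \(p^2\) taken from a different cyclic maximal subgroup. (In fact the same computation applied to \(\Omega_1(G)\) itself would give \(\nu=0\), recovering the cited fact in full.) What your version buys is self-containedness: the corollary no longer depends on \cite{Ma3} except for the standard identification, via Artin's reciprocity law, of the capitulation kernel \(\mathrm{Ker}(\mathrm{j}_{N_i\vert K})\) with the kernel of the transfer \(G/\gamma_2(G)\to M_i/\gamma_2(M_i)\) — an ingredient the paper also relies on and which you rightly flag as the point needing careful normalisation. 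Your sketched alternative via the \(\tau\)-eigenspace decomposition of \(\mathrm{Cl}_p(N_i)\) and the cyclicity of the \(M_i\) would be a genuinely transfer-free variant, though as written it leans on an unproved identification \(\lvert\mathrm{Cl}_p(N_i)^+\rvert=\mathrm{h}_p(L_i)\) that would itself need justification.
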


\begin{proof}
According to \cite[Th.2.5]{Ma3},
the extra special \(p\)-group \(G=G_0^{(3)}(0,1)\)
is a metabelian \(p\)-group of coclass \(\mathrm{cc}(G)=1\)
with invariant \(\nu=0\),
since it is the unique group with principalisation type \(\mathrm{A}.1\),
for which all transfer kernels coincide and are cyclic of order \(p\).
However, if a quadratic base field \(K\) has
a second \(p\)-class group \(G\) of coclass \(\mathrm{cc}(G)=1\),
then its invariant is restricted to \(p\le\nu\le p+1\),
by corollary \ref{c:QdrMaxDir}.
\end{proof}

Now we turn to the metabelian \(3\)-groups
\(G=\mathrm{Gal}(\mathrm{F}_3^2(K)\vert K)\)
of coclass \(\mathrm{cc}(G)\ge 2\).
It is adequate to treat complex and real quadratic base fields separately,
since only for the latter we have to consider the Galois cohomology.

\begin{theorem}
\label{t:CmpLowDir}

Let \(K\) be a complex quadratic base field
with \(3\)-class group \(\mathrm{Cl}_3(K)\) of type \((3,3)\).
Assume that the second \(3\)-class group \(G=\mathrm{Gal}(\mathrm{F}_3^2(K)\vert K)\)
of \(K\) is metabelian of coclass \(\mathrm{cc}(G)\ge 2\)
with order \(\lvert G\rvert=3^n\), class \(\mathrm{cl}(G)=m-1\), and \(e=n-m+2\),
where \(4\le m<n\le 2m-3\).
Suppose that the generators of \(G\) satisfy
the conditions of theorem \ref{t:LowCfg}
and that \(\lbrack\chi_s(G),\gamma_e(G)\rbrack=\gamma_{m-k}(G)\) with \(0\le k\le 1\).
Then the following statements hold.

\begin{enumerate}

\item
The invariants \(e\) and \(m\) of the group \(G\) satisfy the conditions

\begin{eqnarray*}
e&\equiv&1\pmod{2},\\
m&\equiv&
\begin{cases}
0\pmod{2},&\text{ if }k=0,\\
1\pmod{2},&\text{ if }k=1.\\
\end{cases}
\end{eqnarray*}

\item
The \(3\)-class groups
of the four non-Galois complex cubic subfields \(L_1,\ldots,L_4\)
of the unramified cyclic cubic extension fields \(N_1,\ldots,N_4\) of \(K\)
are cyclic and their order is given by

\begin{eqnarray*}
\mathrm{h}_3(L_1)&=&
\begin{cases}
3^{\frac{m-2}{2}},&\text{ if }k=0,\\
3^{\frac{m-3}{2}},&\text{ if }k=1,\\
\end{cases}\\
\mathrm{h}_3(L_2)&=&3^{\frac{e-1}{2}},\\
\mathrm{h}_3(L_i)&=&3\text{ for }3\le i\le 4.
\end{eqnarray*}

\end{enumerate}

\end{theorem}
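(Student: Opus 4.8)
The plan is to specialise Theorem~\ref{t:LowDir} to a \emph{complex} quadratic base field, exactly as Theorem~\ref{t:QdrMaxDir} specialises Theorem~\ref{t:MaxDir}, and then to upgrade the resulting statements about the orders \(\mathrm{h}_3(L_i)\) to statements about the \emph{structure} of \(\mathrm{Cl}_3(L_i)\). First I would record the group-theoretic input: by Theorem~\ref{t:LowDir} the \(3\)-class numbers of the four unramified cyclic cubic extensions of \(K\) are \(\mathrm{h}_3(N_1)=3^{m-1}\) if \(k=0\) and \(3^{m-2}\) if \(k=1\), \(\mathrm{h}_3(N_2)=3^e\), and \(\mathrm{h}_3(N_3)=\mathrm{h}_3(N_4)=3^3\). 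Since \(K\) is complex quadratic, Proposition~\ref{p:QdrCoh}(1) shows that every \(N_i\) is of type \(\alpha\), so Proposition~\ref{p:QdrCln}(1) gives \(\mathrm{h}_3(N_i)=3\cdot\mathrm{h}_3(L_i)^2\); in particular the \(3\)-exponent of each \(\mathrm{h}_3(N_i)\) is odd. Imposing oddness on the four exponents just listed forces \(e\equiv 1\pmod 2\) and \(m-1\equiv 1\pmod 2\), i.e.\ \(m\equiv 0\), when \(k=0\), respectively \(m-2\equiv 1\pmod 2\), i.e.\ \(m\equiv 1\), when \(k=1\) (the exponent \(3\) of \(\mathrm{h}_3(N_3)=\mathrm{h}_3(N_4)\) is already odd and imposes nothing); this is assertion~(1). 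Solving \(\mathrm{h}_3(L_i)^2=\mathrm{h}_3(N_i)/3\) for the orders then yields \(\mathrm{h}_3(L_1)=3^{(m-2)/2}\) if \(k=0\) and \(3^{(m-3)/2}\) if \(k=1\), \(\mathrm{h}_3(L_2)=3^{(e-1)/2}\), and \(\mathrm{h}_3(L_i)=3\) for \(i\in\{3,4\}\), which are the orders in assertion~(2).

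It remains to prove that each \(\mathrm{Cl}_3(L_i)\) is cyclic; for \(i\in\{3,4\}\) this is trivial since \(\mathrm{h}_3(L_i)=3\), so assume \(i\in\{1,2\}\). The plan is to pass to the Galois closure \(N_i\). Since \([N_i:L_i]=2\) is prime to \(3\), the class extension homomorphism \(\mathrm{j}_{N_i\vert L_i}\colon\mathrm{Cl}_3(L_i)\to\mathrm{Cl}_3(N_i)\) is injective (its composite with the norm is multiplication by \(2\)), with image in the \(\tau\)-fixed part \(\mathrm{Cl}_3(N_i)^\tau\), because \(\tau\) fixes \(L_i\) elementwise; comparing orders by the ambiguous class number formula for \(N_i\vert L_i\) (where the relevant indices are prime to \(3\)) identifies \(\mathrm{Cl}_3(L_i)\) with \(\mathrm{Cl}_3(N_i)^\tau\). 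As \(\langle\tau\rangle\) has order \(2\) prime to \(3\), I would split \(\mathrm{Cl}_3(N_i)=\mathrm{Cl}_3(N_i)^{+}\oplus\mathrm{Cl}_3(N_i)^{-}\) into \(\tau\)-eigenspaces, so that \(\mathrm{Cl}_3(N_i)^{+}\cong\mathrm{Cl}_3(L_i)\) and, by the class number relation of the first paragraph, \(\lvert\mathrm{Cl}_3(N_i)^{-}\rvert=3\cdot\lvert\mathrm{Cl}_3(N_i)^{+}\rvert\). Thus it suffices to show that \(\mathrm{Cl}_3(N_i)^{+}\) is cyclic.

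For this I would bring in the action of \(\mathrm{Gal}(N_i\vert K)=\langle\sigma\rangle\simeq C_3\) on \(\mathrm{Cl}_3(N_i)\cong M_i/\gamma_2(M_i)\) (cf.\ Corollary~\ref{c:LowCg} for its explicit structure). By the ambiguous class number formula for the unramified extension \(N_i\vert K\) of the complex quadratic field \(K\) — where the unit index is trivial because \(U(K)=\{\pm1\}\) — one gets \(\lvert\mathrm{Cl}_3(N_i)^\sigma\rvert=\mathrm{h}_3(K)/3=3\). Since \(\mathbb{Z}_3[\langle\sigma\rangle]=\mathbb{Z}_3[t]/(t^3-1)\) is local with residue field \(\mathbb{F}_3\), Nakayama's lemma then forces \(\mathrm{Cl}_3(N_i)\) to be a \emph{cyclic} \(\mathbb{Z}_3[\langle\sigma\rangle]\)-module, hence a finite quotient of \(\mathbb{Z}_3[t]/(t^3-1)=\mathbb{Z}_3\times_{\mathbb{F}_3}\mathbb{Z}_3[\zeta_3]\). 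On such a module \(\tau\) acts as \(t\mapsto t^{-1}\): on the \(\sigma\)-invariant subquotient, which is a quotient of \(\mathrm{Cl}_3(K)\), it acts by inversion (Proposition~\ref{p:QdrDih}), while on the complementary \(\mathbb{Z}_3[\zeta_3]\)-subquotient it acts as complex conjugation. Running through the finite cyclic \(\mathbb{Z}_3[t]/(t^3-1)\)-modules with \(\lvert(\,\cdot\,)^\sigma\rvert=3\) and discarding, by means of \(\lvert\mathrm{Cl}_3(N_i)^{-}\rvert=3\,\lvert\mathrm{Cl}_3(N_i)^{+}\rvert\), those with a non-trivial \(\sigma\)-invariant constituent (e.g.\ the regular module \(\mathbb{F}_3[\langle\sigma\rangle]\), whose plus part is \((\mathbb{Z}/3)^2\) but whose order would give \(\lvert\mathrm{Cl}_3(N_i)\rvert=3\cdot 81\), a contradiction), one is left only with the cyclic \(\mathbb{Z}_3[\zeta_3]\)-modules, for which the part fixed by complex conjugation is a cyclic \(\mathbb{Z}_3\)-module. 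Hence \(\mathrm{Cl}_3(N_i)^{+}\cong\mathrm{Cl}_3(L_i)\) is cyclic, and combining this with the orders from the first paragraph finishes assertion~(2).

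The main obstacle is this last module-theoretic step: the group ring \(\mathbb{Z}_3[D(6)]\) is not semisimple at the critical prime \(p=3\), so there is no naive diagonalisation, and the cyclicity of \(\mathrm{Cl}_3(L_i)\) genuinely comes from the interplay of three inputs — that \(\mathrm{Cl}_3(N_i)\) is cyclic over \(\mathbb{Z}_3[\langle\sigma\rangle]\) (ambiguous class number \(3\)), that \(\tau\) acts by inversion on the \(\sigma\)-invariant part (complexity of \(K\), via Proposition~\ref{p:QdrDih}), and the relation \(\mathrm{h}_3(N_i)=3\cdot\mathrm{h}_3(L_i)^2\) (type \(\alpha\), Proposition~\ref{p:QdrCln}) — so the bookkeeping of the finitely many possibilities, together with the verification that the unit indices in the two applications of the ambiguous class number formula are indeed trivial respectively prime to \(3\), is where the care must be taken. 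Everything else is direct substitution into Theorem~\ref{t:LowDir} and Proposition~\ref{p:QdrCln}.
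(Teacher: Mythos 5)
Your treatment of assertion (1) and of the orders in assertion (2) coincides with the paper's own proof: both specialise Theorem~\ref{t:LowDir} to a complex quadratic base field via Proposition~\ref{p:QdrCln}, impose oddness of the \(3\)-exponents of \(\mathrm{h}_3(N_i)\) to obtain the parities of \(e\) and \(m\), and solve \(\mathrm{h}_3(L_i)^2=\mathrm{h}_3(N_i)/3\) for the orders. The divergence is in the cyclicity of \(\mathrm{Cl}_3(L_i)\): the paper disposes of this in one line by citing Gerth's rank formula \(\mathrm{r}_3(L_i)=\mathrm{r}_3(K)-1\) for unramified \(N_i\vert K\), which gives rank one immediately since \(\mathrm{Cl}_3(K)\) is bicyclic. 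Your replacement — ambiguous class number formula, cyclicity of \(\mathrm{Cl}_3(N_i)\) as a \(\mathbb{Z}_3[\langle\sigma\rangle]\)-module, then analysis of the \(\tau\)-action — is a genuinely different route; the identification \(\mathrm{Cl}_3(L_i)\cong\mathrm{Cl}_3(N_i)^{+}\) and the computation \(\lvert\mathrm{Cl}_3(N_i)^\sigma\rvert=3\) are sound (though the appeal to Nakayama tacitly needs \(\lvert M^\sigma\rvert=\lvert M_\sigma\rvert\) for finite modules to pass from invariants to the minimal number of generators).

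The final classification step, however, contains a genuine error. The involution \(\tau\) acts on \(\mathrm{Cl}_3(N_i)\) only semilinearly and is determined only up to a unit twist, so you may not assume that the plus part of the regular module \(\mathbb{F}_3[\langle\sigma\rangle]\) is \((\mathbb{Z}/3)^2\): with the twisted involution the eigenspaces swap, \(\lvert M^{-}\rvert=3\lvert M^{+}\rvert\) holds with \(\lvert M\rvert=27\), and this module survives every constraint you impose — you are therefore \emph{not} left only with cyclic \(\mathbb{Z}_3[\zeta_3]\)-modules. Indeed, \(\mathrm{Cl}_3(N_i)\) elementary abelian of type \((3,3,3)\) is arithmetically realised for unramified cyclic cubic extensions of quadratic fields (the paper itself distinguishes \((9,3)\) from \((3,3,3)\) for \(N_1\) in the types \(\mathrm{a}.3\) versus \(\mathrm{a}.3^\ast\)). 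The theorem is saved only because in the surviving case the plus part has order \(3\) and is trivially cyclic; but your case analysis, taken at face value, wrongly excludes a possibility that occurs, and the cyclicity of the plus part of a general cyclic \(\mathbb{Z}_3[t]/(t^3-1)\)-module with semilinear involution still requires an argument (for instance the alternation of the sign of \(\tau\) along the \(\pi\)-adic filtration of \(\mathcal{O}/\pi^k\), or simply Gerth's theorem as the paper does).
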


\begin{proof}

In the case of a complex quadratic base field \(K\),
proposition \ref{p:QdrCln} states that
the \(3\)-exponent of each of the \(3\)-class numbers \(\mathrm{h}_3(N_i)\)
with \(1\le i\le 4\)
in theorem \ref{t:LowDir} must be odd.

\begin{enumerate}

\item
For the first extension \(N_1\), we therefore obtain the condition\\
\(m-1\equiv 1\pmod{2},\text{ if }k=0,\text{ and }
m-2\equiv 1\pmod{2},\text{ if }k=1\).\\
For the second extension \(N_2\), it follows that
\(e\equiv 1\pmod{2}\).\\
For the third and fourth extension \(N_3,N_4\),
the \(3\)-exponent \(3\) of \(\mathrm{h}_3(N_i)=3^3\)
is odd, a priori.

\item
By proposition \ref{p:QdrCln}, the relation between the \(3\)-class numbers
of \(L_i\) and \(N_i\) is given by
\(\mathrm{h}_3(L_i)^2=\frac{\mathrm{h}_3(N_i)}{3}\).
With the aid of theorem \ref{t:LowDir}, it follows that\\
\(\mathrm{h}_3(L_1)=3^{\frac{m-1-1}{2}}\), if \(k=0\),\\
\(\mathrm{h}_3(L_1)=3^{\frac{m-2-1}{2}}\), if \(k=1\),\\
\(\mathrm{h}_3(L_2)=3^{\frac{e-1}{2}}\), and\\
\(\mathrm{h}_3(L_i)=3^{\frac{3-1}{2}}=3\) for \(3\le i\le 4\).\\
According to Gerth \cite[p.315, Th.3.4]{Ge},
the \(3\)-class ranks of \(L_i\) and \(K\) satisfy the condition
\(\mathrm{r}_3(L_i)=\mathrm{r}_3(K)-1\),
if \(N_i\vert K\) is unramified.
Consequently, \(\mathrm{Cl}_3(L_i)\) is cyclic,
since \(\mathrm{Cl}_3(K)\) is bicyclic.
\end{enumerate}
\end{proof}

\begin{theorem}
\label{t:RealLowDir}

Let \(K\) be a real quadratic base field
with \(3\)-class group \(\mathrm{Cl}_3(K)\) of type \((3,3)\).
Assume that
the second \(3\)-class group \(G=\mathrm{Gal}(\mathrm{F}_3^2(K)\vert K)\)
of \(K\) is metabelian of coclass \(\mathrm{cc}(G)\ge 2\)
with order \(\lvert G\rvert=3^n\), class \(\mathrm{cl}(G)=m-1\), and \(e=n-m+2\),
where \(4\le m<n\le 2m-3\).
Suppose that the generators of \(G\) satisfy
the conditions of theorem \ref{t:LowCfg}
and that \(\lbrack\chi_s(G),\gamma_e(G)\rbrack=\gamma_{m-k}(G)\) with \(0\le k\le 1\).
Then the following statements hold.

\begin{enumerate}

\item

\begin{eqnarray*}
L_1&\text{ is of type }&
\begin{cases}
\alpha,&\text{ if }m\equiv 1(2),\ k=0
\text{ or }m\equiv 0(2),\ k=1,\\
\delta,&\text{ if }m\equiv 0(2),\ k=0
\text{ or }m\equiv 1(2),\ k=1,\\
\end{cases}\\
L_2&\text{ is of type }&
\begin{cases}
\alpha,&\text{ if }e\equiv 0\pmod{2},\\
\delta,&\text{ if }e\equiv 1\pmod{2},\\
\end{cases}\\
L_i&\text{ is of type }&\delta\text{ for }3\le i\le 4.
\end{eqnarray*}

\item
The \(3\)-class groups of the four non-Galois totally real cubic subfields
\(L_1,\ldots,L_4\) of the unramified cyclic cubic field extensions \(N_1,\ldots,N_4\) of \(K\)
are cyclic and their order is given by

\begin{eqnarray*}
\mathrm{h}_3(L_1)&=&
\begin{cases}
3^{\frac{m-1}{2}},&\text{ if }k=0\text{ and }L_1\text{ is of type }\alpha,\\
3^{\frac{m-2}{2}},&\text{ if }k=1\text{ and }L_1\text{ is of type }\alpha
\text{ or }k=0\text{ and }L_1\text{ is of type }\delta,\\
3^{\frac{m-3}{2}},&\text{ if }k=1\text{ and }L_1\text{ is of type }\delta,
\end{cases}\\
\mathrm{h}_3(L_2)&=&
\begin{cases}
3^{\frac{e}{2}},&\text{ if }L_2\text{ is of type }\alpha,\\
3^{\frac{e-1}{2}},&\text{ if }L_2\text{ is of type }\delta,\\
\end{cases}\\
\mathrm{h}_3(L_i)&=&3\text{ for }3\le i\le 4.
\end{eqnarray*}

\end{enumerate}

\end{theorem}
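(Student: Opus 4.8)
The plan is to specialise Theorem~\ref{t:LowDir} to a real quadratic base field $K=\mathbb{Q}(\sqrt{D})$ and to read off the Galois-module types and the class numbers $\mathrm{h}_3(L_i)$ from the parity of the $3$-exponents of the $\mathrm{h}_3(N_i)$, exactly in the spirit of the proofs of Theorems~\ref{t:QdrMaxDir} and~\ref{t:CmpLowDir}. The essential input is Proposition~\ref{p:QdrCln}(2): for an unramified absolutely dihedral $N_i$ over a real quadratic $K$ one has $\mathrm{h}_3(N_i)=3\cdot\mathrm{h}_3(L_i)^2$ with odd $3$-exponent if $N_i$ is of type $\delta$, and $\mathrm{h}_3(N_i)=\mathrm{h}_3(L_i)^2$ with even $3$-exponent if $N_i$ is of type $\alpha$. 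Hence the $3$-exponent of $\mathrm{h}_3(N_i)$, which Theorem~\ref{t:LowDir} furnishes explicitly, is even if and only if $N_i$ (equivalently $L_i$, by Definition~\ref{d:QdrCoh}) is of type $\alpha$. In contrast to the complex quadratic situation of Theorem~\ref{t:CmpLowDir}, this imposes no congruence restriction on $m$ and $e$; it merely pins down the Galois-module type of each field.

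For part~(1) I would argue field by field. For $i=3,4$ Theorem~\ref{t:LowDir} gives $\mathrm{h}_3(N_i)=3^3$, whose exponent $3$ is odd, so $N_3$ and $N_4$ are of type $\delta$. For $i=2$ one has $\mathrm{h}_3(N_2)=3^e$, so $N_2$ is of type $\alpha$ when $e$ is even and of type $\delta$ when $e$ is odd. For $i=1$ one has $\mathrm{h}_3(N_1)=3^{m-1}$ if $k=0$ and $\mathrm{h}_3(N_1)=3^{m-2}$ if $k=1$; distinguishing the parity of $m$ in each of these two cases produces the four sub-cases listed in the theorem, with $L_1$ of type $\alpha$ precisely when the exponent in question is even.

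For part~(2) the relation of Proposition~\ref{p:QdrCln}(2) is inverted: $\mathrm{h}_3(L_i)=(\mathrm{h}_3(N_i)/3)^{1/2}$ when $N_i$ is of type $\delta$, and $\mathrm{h}_3(L_i)=\mathrm{h}_3(N_i)^{1/2}$ when $N_i$ is of type $\alpha$. Substituting the values of Theorem~\ref{t:LowDir} and matching them with the types found in part~(1) yields all the asserted formulae for $\mathrm{h}_3(L_1)$ and $\mathrm{h}_3(L_2)$, and $\mathrm{h}_3(L_i)=3$ for $3\le i\le 4$. The cyclicity of $\mathrm{Cl}_3(L_i)$ then follows, as in Theorem~\ref{t:CmpLowDir}, from Gerth's theorem \cite[p.315, Th.3.4]{Ge}: since $N_i\vert K$ is unramified, $\mathrm{r}_3(L_i)=\mathrm{r}_3(K)-1=1$, so $\mathrm{Cl}_3(L_i)$ is of $3$-rank one, hence cyclic.

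The only genuinely delicate point is the bookkeeping in part~(1)/(2): one must check that the four sub-cases for $L_1$ are mutually exclusive and exhaustive, and that in every case the resulting exponent of $\mathrm{h}_3(L_i)$ is a non-negative integer. This uses the ranges from Theorem~\ref{t:LowDir} ($m\ge 4$ when $k=0$, $m\ge 5$ when $k=1$) together with the parity of $m$ forced in each sub-case; for instance, $L_1$ of type $\delta$ with $k=1$ forces $m$ odd and $m\ge 5$, whence $(m-3)/2$ is a positive integer, and similarly in the remaining cases. I do not expect any impossibility phenomena of the kind appearing in Theorem~\ref{t:QdrMaxDir}, since for a real quadratic $K$ both Galois-module types can occur, so the case analysis is purely descriptive rather than restrictive.
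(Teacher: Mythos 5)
Your proposal is correct and follows essentially the same route as the paper: specialising Theorem \ref{t:LowDir}, reading off each Galois-module type from the parity of the \(3\)-exponent of \(\mathrm{h}_3(N_i)\) via Proposition \ref{p:QdrCln}, inverting that relation to obtain \(\mathrm{h}_3(L_i)\), and invoking Gerth's theorem for cyclicity. No discrepancies with the paper's argument.
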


\begin{proof}

In the case of a real quadratic base field \(K\),
proposition \ref{p:QdrCln} states that
the \(3\)-exponent of a \(3\)-class number \(\mathrm{h}_3(N_i)\)
with \(1\le i\le 4\)
in theorem \ref{t:LowDir} must be odd,
if \(N_i\) is of type \(\delta\),
and even,
if \(N_i\) is of type \(\alpha\).

\begin{enumerate}

\item
This yields four possibilities for the first extension \(N_1\),\\
\(m-1\equiv 1(2)\text{ for }k=0,\text{ and }
m-2\equiv 1(2)\text{ for }k=1\),
if \(N_1\) is of type \(\delta\), and\\
\(m-1\equiv 0(2)\text{ for }k=0,\text{ and }
m-2\equiv 0(2)\text{ for }k=1\),
if \(N_1\) is of type \(\alpha\).\\
For the second extension \(N_2\), it follows that\\
\(e\equiv 1\pmod{2}\), if \(N_2\) is of type \(\delta\), and\\
\(e\equiv 0\pmod{2}\), if \(N_2\) is of type \(\alpha\).\\
For the third and fourth extension \(N_3,N_4\),
the \(3\)-exponent \(3\) of \(\mathrm{h}_3(N_i)=3^3\)
is odd, a priori.
Therefore \(N_3\) and \(N_4\) must be of type \(\delta\).

\item
By proposition \ref{p:QdrCln}, the relation between the \(3\)-class numbers
of \(L_i\) and \(N_i\) is given by
\(\mathrm{h}_3(L_i)^2=\frac{\mathrm{h}_3(N_i)}{3}\),
if \(L_i\) is of type \(\delta\), and
\(\mathrm{h}_3(L_i)^2=\mathrm{h}_3(N_i)\),
if \(L_i\)is of type \(\alpha\).\\
With the aid of theorem \ref{t:LowDir}, it follows that\\
\(\mathrm{h}_3(L_1)=3^{\frac{m-1-1}{2}}\) for \(k=0\), and
\(\mathrm{h}_3(L_1)=3^{\frac{m-2-1}{2}}\) for \(k=1\), if \(L_1\) is of type \(\delta\), and\\
\(\mathrm{h}_3(L_1)=3^{\frac{m-1}{2}}\) for \(k=0\), and
\(\mathrm{h}_3(L_1)=3^{\frac{m-2}{2}}\) for \(k=1\), if \(L_1\) is of type \(\alpha\);\\
\(\mathrm{h}_3(L_2)=3^{\frac{e-1}{2}}\), if \(L_2\) is of type \(\delta\), and
\(\mathrm{h}_3(L_2)=3^{\frac{e}{2}}\), if \(L_2\) is of type \(\alpha\);\\
\(\mathrm{h}_3(L_i)=3^{\frac{3-1}{2}}=3\) for \(3\le i\le 4\).\\
According to Gerth \cite[p.315, Th.3.4]{Ge},
the \(3\)-class group \(\mathrm{Cl}_3(L_i)\) must be cyclic,
since \(\mathrm{Cl}_3(K)\) is bicyclic
and \(N_i\) is unramified over \(K\).
\end{enumerate}
\end{proof}

\begin{corollary}
\label{c:QdrLowDir}

Let \(K\) be a quadratic base field
with \(3\)-class group \(\mathrm{Cl}_3(K)\) of type \((3,3)\).
If the second \(3\)-class group \(G=\mathrm{Gal}(\mathrm{F}_3^2(K)\vert K)\)
of \(K\) is metabelian of coclass \(\mathrm{cc}(G)\ge 2\),
then the invariant \(\nu\) of \(K\)
is restricted to the values \(0\le\nu\le 2\), if \(K\) is real,
and to \(\nu=0\), if \(K\) is complex.

\end{corollary}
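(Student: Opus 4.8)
The plan is to read off the bound on $\nu$ directly from the capitulation behaviour recorded in Proposition \ref{p:QdrPrc}, combined with the type classification of the dihedral fields $N_i$ established in Theorems \ref{t:CmpLowDir} and \ref{t:RealLowDir}. Recall that, by Definition \ref{d:TotPrc}, $\nu$ counts those indices $1\le i\le 4$ (there are $p+1=4$ of them, since $p=3$) for which the entire $3$-class group capitulates in $N_i$, that is $\mathrm{Ker}(\mathrm{j}_{N_i\vert K})=\mathrm{Cl}_3(K)$. So the task reduces to determining, in each of the two cases, how many of the four extensions can be of ``total'' capitulation type. Before applying the two theorems I would note that their hypotheses hold automatically here: a metabelian $3$-group $G$ with $G/\gamma_2(G)$ of type $(3,3)$ and $\mathrm{cc}(G)\ge 2$ satisfies $4\le m<n\le 2m-3$ and $0\le k\le 1$, and admits generators as in Theorem \ref{t:LowCfg}, by Nebelung's classification \cite{Ne}.

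For a complex quadratic base field, Proposition \ref{p:QdrPrc}(2) says that the principalisation kernel $\mathrm{Ker}(\mathrm{j}_{N_i\vert K})$ is cyclic of order $p=3$ for every $1\le i\le 4$. Since $\mathrm{Cl}_3(K)$ is bicyclic of type $(3,3)$, hence of order $9$, no $N_i$ can realise total principalisation, and therefore $\nu=0$. This settles the complex case at once.

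For a real quadratic base field, Proposition \ref{p:QdrPrc}(3) tells us that $\mathrm{Ker}(\mathrm{j}_{N_i\vert K})$ coincides with $\mathrm{Cl}_3(K)$ exactly when $N_i$ is of type $\alpha$, and is cyclic of order $p$ when $N_i$ is of type $\delta$. Consequently $\nu$ is precisely the number of the four fields $N_1,\dots,N_4$ that are of type $\alpha$. Now Theorem \ref{t:RealLowDir}(1) asserts that $L_3$ and $L_4$, equivalently $N_3$ and $N_4$, are of type $\delta$ unconditionally, regardless of the invariants $m,e,k$ of $G$. Hence at most the two fields $N_1$ and $N_2$ can be of type $\alpha$, giving $0\le\nu\le 2$.

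There is no serious obstacle here — the corollary is a quick consequence of the preceding theorems — but the one point that genuinely needs care is the different meaning of ``type $\alpha$'' over the two kinds of base field: over a real $K$ it signals total (bicyclic) capitulation, whereas over a complex $K$ it still leaves the capitulation kernel cyclic of order $p$ (the unit group of a complex quadratic field being essentially trivial, the Herbrand-quotient computation underlying Proposition \ref{p:QdrPrc} comes out differently). Keeping that distinction straight is exactly what produces the asymmetric conclusion $\nu=0$ versus $0\le\nu\le 2$, and no further computation is required.
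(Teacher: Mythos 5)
Your proof is correct and follows essentially the same route as the paper, which likewise derives the corollary as an immediate consequence of Proposition \ref{p:QdrPrc} (the dichotomy between cyclic and bicyclic principalisation kernels) together with the unconditional type-\(\delta\) assertion for \(L_3,L_4\) in Theorem \ref{t:RealLowDir}. You merely spell out the one-line argument the paper leaves implicit, and your closing remark on the asymmetry between the complex and real cases is accurate.
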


\begin{proof}

These statements are immediate consequences of
theorem \ref{t:RealLowDir}
together with proposition \ref{p:QdrPrc},
independently from the values of the invariants \(m,n,e\), and \(k\) of the group \(G\).
\end{proof}


\section{Second \(3\)-class groups of quadratic fields of type \((3,3)\)}
\label{s:InvThmCub}

In this section we develop the methods
for the computation of the structure
of the second \(3\)-class group
\(G=\mathrm{Gal}(\mathrm{F}_3^2(K)\vert K)\)
of a quadratic base field \(K=\mathbb{Q}(\sqrt{D})\)
with \(3\)-class group of type \((3,3)\).

The \(3\)-class numbers \(\mathrm{h}_3(L_i)\)
of the four non-Galois absolutely cubic subfields \(L_1,\ldots,L_4\)
of the unramified cyclic cubic extension fields \(N_1,\ldots,N_4\) of \(K\)
determine the order \(\lvert G\rvert=3^n\), class \(\mathrm{cl}(G)=m-1\),
coclass \(\mathrm{cc}(G)=n-m+1\),
and the invariant \(e=n-m+2\) of the second \(3\)-class group
\(G=\mathrm{Gal}(\mathrm{F}_3^2(K)\vert K)\) of \(K\),
if the value \(k=0\) is enforced by the principalisation type of \(K\),
such as for the types
of the sections D,E,F with invariant \(\nu=0\) by Scholz and Taussky \cite{SoTa}
and of the sections c,d with \(\nu=1\) by Nebelung \cite{Ne}.

However, if the invariant \(0\le k\le 1\) is not determined uniquely
by the principalisation type of \(K\) in \(N_1,\ldots,N_4\),
such as for the types of the sections G,H with \(\nu=0\) in \cite{SoTa}
and the sections a,b with \(2\le\nu\le 4\) in \cite{Ne},
then we have to compute the \(3\)-class number \(\mathrm{h}_3(\mathrm{F}_3^1(K))\)
of the first Hilbert \(3\)-class field of \(K\) additionally
to obtain the value of the invariant \(k\).

The assumptions of the following inverse theorems
\ref{t:QdrMaxInv}, \ref{t:QdrLowInv}, and \ref{t:RealLowInv}
are motivated by the results of the theorems
\ref{t:QdrMaxDir}, \ref{t:CmpLowDir}, and \ref{t:RealLowDir}.


\subsection{Dominating total principalisation}
\label{ss:QdrMaxInv}

The first inverse theorem characterises
the real quadratic base fields having the invariant \(3\le\nu\le 4\),
where the second \(3\)-class group
\(G=\mathrm{Gal}(\mathrm{F}_3^2(K)\vert K)\) must be of coclass \(\mathrm{cc}(G)=1\).

\begin{theorem}
\label{t:QdrMaxInv}

Let \(K\) be a real quadratic field
with elementary abelian bicyclic \(3\)-class group.
Suppose that at least three of the
four non-Galois totally real absolutely cubic subfields \(L_1,\ldots,L_4\)
of the absolutely dihedral unramified cyclic cubic extension fields \(N_1,\ldots,N_4\) of \(K\)
are of type \(\alpha\), say \(L_2,L_3,L_4\)
with \(3\)-class numbers \(\mathrm{h}_3(L_2)=\mathrm{h}_3(L_3)=\mathrm{h}_3(L_4)=3\).
Assume that the remaining absolutely cubic field \(L_1\)
has the \(3\)-class number \(\mathrm{h}_3(L_1)=3^u\) with exponent \(u\ge 1\).
Then the second \(3\)-class group
\(G=\mathrm{Gal}(\mathrm{F}_3^2(K)\vert K)\) of \(K\)
is of coclass \(\mathrm{cc}(G)=1\) with \(n=m\), \(e=2\) and the invariants \(m\) and \(k\) are given by
\[
\begin{cases}
m=2u+1\ge 3,\ k=0\text{ or }m=2u+2\ge 6,\ k=1,&\text{ if }L_1\text{ is of type }\alpha,\\
m=2u+2\ge 4,\ k=0,&\text{ if }L_1\text{ is of type }\delta.\\
\end{cases}
\]
In the first case, the invariant \(k=w-2u+1\) is determined by
the \(3\)-class number \(\mathrm{h}_3(\mathrm{F}_3^1(K))=3^{w}\)
of the first Hilbert \(3\)-class field of \(K\),
and \(k=1\) enforces \(u\ge 2\).

\end{theorem}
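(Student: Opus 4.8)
The statement is essentially the inverse direction of Theorem~\ref{t:QdrMaxDir}: given the arithmetic data $\mathrm{h}_3(L_i)$, recover the group-theoretic invariants $m$ and $k$. First I would combine the class number relations of Proposition~\ref{p:QdrCln} with the type dichotomy. For $2\le i\le 4$, the hypothesis $\mathrm{h}_3(L_i)=3$ together with $L_i$ of type $\alpha$ gives $\mathrm{h}_3(N_i)=\mathrm{h}_3(L_i)^2=3^2$, while $\mathrm{h}_3(L_1)=3^u$ gives $\mathrm{h}_3(N_1)=3^{2u}$ if $L_1$ is of type $\alpha$ and $\mathrm{h}_3(N_1)=3\cdot 3^{2u}=3^{2u+1}$ if $L_1$ is of type $\delta$. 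So among the four unramified cyclic cubic extensions, three have $3$-class number exactly $3^2$. The first task is to show this forces $\mathrm{cc}(G)=1$.

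For that I would argue by contradiction using the coclass $\ge 2$ case. If $\mathrm{cc}(G)\ge 2$, then by Theorem~\ref{t:LowDir} the four $3$-class numbers $\mathrm{h}_3(N_i)$ are $3^{m-1}$ or $3^{m-2}$ (for $N_1$), $3^e$ (for $N_2$), and $3^3$ (for $N_3,N_4$). Having three of the $\mathrm{h}_3(N_i)$ equal to $3^2$ is incompatible with this list, since $\mathrm{h}_3(N_3)=\mathrm{h}_3(N_4)=3^3\ne 3^2$: at most one of the four can be as small as $3^2$, namely $N_2$ if $e=2$, but $e\ge 3$ is part of the coclass $\ge2$ hypothesis. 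Hence $\mathrm{cc}(G)=1$, so $n=m$ and $e=2$, and Theorem~\ref{t:MaxDir} applies: $\mathrm{h}_3(N_i)=3^2$ for the three indices $2,3,4$ (consistent), and $\mathrm{h}_3(N_1)=3^{m-k-1}$ with $k=0$ for $m\le 3$, $0\le k\le m-4$ for $m\ge 4$.

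Now equate: $3^{m-k-1}=\mathrm{h}_3(N_1)$. If $L_1$ is of type $\alpha$, then $\mathrm{h}_3(N_1)=3^{2u}$, so $m-k-1=2u$, i.e.\ $m=2u+k+1$. By Proposition~\ref{p:QdrCln} the $3$-exponent $m-k-1$ of $\mathrm{h}_3(N_1)$ must be even in the $\alpha$ case, which $2u$ indeed is, giving no further restriction; but the feasible pairs are $k=0,\ m=2u+1\ge 3$, or $k=1,\ m=2u+2$, where $k=1$ requires $m\ge 5$ (by the range $0\le k\le m-4$, so $m\ge 5$; and since $m=2u+2$ is even and $\ge5$ forces $m\ge6$, i.e.\ $u\ge2$). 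Note $k\ge2$ would need $m-k-1=2u$ with $m\ge6$, but then by Theorem~\ref{t:QdrMaxDir}(3) all $N_i$ are of type $\alpha$ with $k\ge 1$ — consistent with our setup — yet the exponent of $\mathrm{h}_3(N_1)$ being $2u$ even is automatic; the reason $k\ge2$ is excluded here is that we have only posited $L_1$ distinguished with $\mathrm{h}_3(L_2)=\mathrm{h}_3(L_3)=\mathrm{h}_3(L_4)=3$, and $k\ge 2$ forces by Theorem~\ref{t:MaxDir} the constraint $m\ge 6,p\ge5$, impossible for $p=3$. So $0\le k\le 1$. If $L_1$ is of type $\delta$, then $\mathrm{h}_3(N_1)=3^{2u+1}$, so $m-k-1=2u+1$; the $\delta$ case forces this exponent odd (automatic), and $k\ge1$ with type $\delta$ is impossible by Theorem~\ref{t:QdrMaxDir}(3) (the ``$N_1$ of type $\delta$ and $k\ge1$'' case is ruled out there), hence $k=0$ and $m=2u+2\ge 4$. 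This yields exactly the displayed case distinction.

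**Determining $k$ from the Hilbert class field.** In the type-$\alpha$ case $m$ and $k$ are not separated by the $\mathrm{h}_3(L_i)$ alone. Here I would use $\mathrm{F}_3^1(K)$: by Artin reciprocity $\gamma_2(G)=\mathrm{Gal}(\mathrm{F}_3^2(K)\mid\mathrm{F}_3^1(K))\simeq\mathrm{Cl}_3(\mathrm{F}_3^1(K))$, so $\mathrm{h}_3(\mathrm{F}_3^1(K))=|\gamma_2(G)|=|G|/|G/\gamma_2(G)|=3^{m}/3^2=3^{m-2}$. Setting $\mathrm{h}_3(\mathrm{F}_3^1(K))=3^w$ gives $w=m-2$. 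Since in the type-$\alpha$ case $m=2u+k+1$, we get $w=2u+k-1$, i.e.\ $k=w-2u+1$. The two admissible values $k\in\{0,1\}$ correspond to $w=2u-1$ (then $m=2u+1$) and $w=2u$ (then $m=2u+2$), and $k=1$ forces $m=2u+2\ge 6$, hence $u\ge 2$, as claimed.

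**Main obstacle.** The delicate point is the exclusion of $k\ge 2$ (and, in the $\delta$ case, of $k\ge1$). This does not come from the bare class-number bookkeeping but from the principalisation theory: it relies on the results of Theorem~\ref{t:QdrMaxDir} (ultimately on \cite{Ma3}) stating that a second $3$-class group of coclass $1$ with $k\ge1$ has all $N_i$ of type $\alpha$, and that for $p=3$ the range of $k$ is $\{0,1\}$. One must be careful to invoke these correctly and to check that the hypothesis ``$\mathrm{h}_3(L_2)=\mathrm{h}_3(L_3)=\mathrm{h}_3(L_4)=3$ and these of type $\alpha$'' is genuinely compatible with, and does not over-constrain, the group $G$ — i.e.\ that both branches ($k=0$ and $k=1$) actually occur. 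The rest is the routine algebra of matching exponents carried out above.
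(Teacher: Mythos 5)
Your proposal is correct and reaches the same conclusions, but the decisive step --- forcing $\mathrm{cc}(G)=1$ --- is argued differently from the paper. The paper translates the hypothesis ``$L_2,L_3,L_4$ of type $\alpha$'' into principalisation data: by Proposition \ref{p:QdrPrc} type $\alpha$ means total principalisation, so $\nu\ge 3$, which contradicts Corollary \ref{c:QdrLowDir} ($\nu\le 2$ for real quadratic $K$ with $\mathrm{cc}(G)\ge 2$). You instead translate the hypothesis into class-number magnitudes via Proposition \ref{p:QdrCln} ($\mathrm{h}_3(N_i)=\mathrm{h}_3(L_i)^2=3^2$ for $i=2,3,4$) and observe that Theorem \ref{t:LowDir} makes all four $\mathrm{h}_3(N_i)$ at least $3^3$ when $\mathrm{cc}(G)\ge 2$ (since $e\ge 3$ and $m-k-1\ge 3$ there), so three values equal to $3^2$ are impossible. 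Both routes are inverses of the same direct theorems and are equally rigorous; yours has the small advantage of not needing the cohomological type/principalisation dictionary at this step and of being manifestly independent of how the indices $1,\dots,4$ are matched to the group-theoretic labelling, while the paper's version isolates the arithmetic content ($\nu\ge 3$ versus $\nu\le 2$) that motivates the section heading ``dominating total principalisation.'' The remainder of your argument --- matching $3^{m-k-1}=\mathrm{h}_3(N_1)$ against $3^{2u}$ or $3^{2u+1}$, excluding $k\ge 2$ by the constraint $p\ge 5$ in Theorem \ref{t:MaxDir}, excluding type $\delta$ with $k\ge 1$ by Theorem \ref{t:QdrMaxDir}(3), and extracting $k=w-2u+1$ from $\lvert\gamma_2(G)\rvert=3^{m-2}$ --- coincides with the paper's computation, except that the paper reads the parities and exponents directly off the already-specialised Theorem \ref{t:QdrMaxDir} rather than recombining Theorem \ref{t:MaxDir} with Proposition \ref{p:QdrCln}; your closing caveat about whether both branches $k=0,1$ actually occur is a fair remark but is not needed for the statement as formulated.
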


\begin{proof}

If the totally real cubic fields \(L_2,L_3,L_4\) are of type \(\alpha\), then
the entire \(3\)-class group \(\mathrm{Cl}_3(K)\)
of the real quadratic base field \(K\)
becomes principal
in the three dihedral fields \(N_2,N_3,N_4\),
by proposition \ref{p:QdrPrc}.\\
If the second \(3\)-class group
\(G=\mathrm{Gal}(\mathrm{F}_3^2(K)\vert K)\)
of \(K\) were of coclass \(\mathrm{cc}(G)\ge 2\), then
\(K\) had a total principalisation
in at most two of the four unramified cyclic cubic extensions \(N_i\),
by corollary \ref{c:QdrLowDir}.
Consequently, \(G\) must be of coclass \(\mathrm{cc}(G)=1\) in the present situation.

By theorem \ref{t:QdrMaxDir}, we have\\
\(3^u=\mathrm{h}_3(L_1)=3^{\frac{m-1}{2}}\) with \(m\equiv 1(2)\),
if \(L_1\)is of type \(\alpha\), \(k=0\), but\\
\(3^u=\mathrm{h}_3(L_1)=3^{\frac{m-2}{2}}\) with \(m\equiv 0(2)\),
if \(L_1\) is of type \(\alpha\), \(k=1\), \(m\ge 5\)
or \(L_1\) is of type \(\delta\), \(k=0\).

For the group \(G\), we therefore obtain\\
an odd index of nilpotence \(m=2u+1\ge 3\),
if \(L_1\) is of type \(\alpha\) and \(k=0\), and\\
an even index of nilpotence \(m=2u+2\ge 4\),
if \(L_1\) is of type \(\delta\) and \(k=0\),\\
or \(m=2u+2\ge 6\) with \(u\ge 2\),
if \(L_1\) is of type \(\alpha\) and \(k=1\).

Finally, if \(L_1\) is of type \(\alpha\), the invariant \(k\)
is determined by
\(3^w=\mathrm{h}_3(\mathrm{F}_3^1(K))=\lvert\gamma_2(G)\rvert=3^{m-2}=3^{2u+k-1}\),
that is \(k=w-2u+1\).
\end{proof}


\subsection{Partial principalisation}
\label{ss:QdrLowInv}

By the second inverse theorem we cover all the complex quadratic base fields
and the real quadratic base fields having the invariant \(\nu=0\),
which show a very similar behavior.
Here, the second \(3\)-class group
\(G=\mathrm{Gal}(\mathrm{F}_3^2(K)\vert K)\) is of coclass \(\mathrm{cc}(G)\ge 2\).

\begin{theorem}
\label{t:QdrLowInv}

Let \(K\) be a quadratic field
with elementary abelian bicyclic \(3\)-class group.
In the case of a real quadratic field \(K\), let all 
four absolutely dihedral unramified cyclic cubic extension fields \(N_1,\ldots,N_4\) of \(K\)
be of type \(\delta\).
Suppose that at least two
of the four non-Galois absolutely cubic subfields \(L_1,\ldots,L_4\)
of \(N_1,\ldots,N_4\), say \(L_3,L_4\),
have \(3\)-class numbers \(\mathrm{h}_3(L_3)=\mathrm{h}_3(L_4)=3\).
Assume that the remaining two absolutely cubic fields \(L_1,L_2\) have
\(3\)-class numbers \(\mathrm{h}_3(L_1)=3^u\) and \(\mathrm{h}_3(L_2)=3^v\)
with exponents \(u\ge v\ge 1\).
Then the second \(3\)-class group
\(G=\mathrm{Gal}(\mathrm{F}_3^2(K)\vert K)\) of \(K\)
is of coclass \(\mathrm{cc}(G)\ge 2\) with \(4\le m<n\le 2m-3\), \(e=n-m+2\ge 3\),
the invariants \(m\) and \(n\) are given by
\[
\begin{cases}
m=2u+2\ge 4,\ n=2u+2v+1\ge 5,&\text{ if }k=0,\\
m=2u+3\ge 5,\ n=2u+2v+2\ge 6,&\text{ if }k=1,
\end{cases}
\]
the invariant \(e\) has the odd value \(2v+1\ge 3\), and
the invariant \(k=w-2u-2v+1\) is determined by
the \(3\)-class number \(\mathrm{h}_3(\mathrm{F}_3^1(K))=3^{w}\)
of the first Hilbert \(3\)-class field of \(K\).

\end{theorem}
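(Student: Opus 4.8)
The plan is to treat this statement as the converse of Theorems~\ref{t:CmpLowDir} and \ref{t:RealLowDir}: those results compute the \(3\)-class numbers \(\mathrm{h}_3(L_i)\) from the invariants \(m,n,e,k\) of a second \(3\)-class group of coclass \(\ge 2\), and here we must invert that passage, recovering \(m,n,e\) from the \(\mathrm{h}_3(L_i)\) and then isolating \(k\) by means of \(\mathrm{h}_3(\mathrm{F}_3^1(K))\). Hence the first, and I think most delicate, step is to make sure that \(G\) really lies in the regime of coclass \(\ge 2\), that is, to rule out the maximal-class case \(\mathrm{cc}(G)\le 1\).

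For this I would invoke Theorem~\ref{t:QdrMaxDir}. The abelian case \(m=2\) is impossible for a quadratic base field by Theorem~\ref{t:QdrMaxDir}(1) (it would also force \(\mathrm{h}_3(N_i)=3\) and hence \(\mathrm{h}_3(L_i)=1\) for every \(i\), against \(u\ge v\ge 1\)). Suppose then that \(G\) were metabelian of maximal class, \(m\ge 3\). For a complex base field this is excluded by Theorem~\ref{t:QdrMaxDir}(2), since maximal class can only occur over a real field; and for a real base field Theorem~\ref{t:QdrMaxDir}(2)--(3) shows that \(N_2,N_3,N_4\) would then be of type \(\alpha\), contradicting the standing hypothesis that all four \(N_i\) are of type \(\delta\). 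Thus \(\mathrm{cc}(G)\ge 2\), so \(G\) is a metabelian \(3\)-group with \(G/\gamma_2(G)\) of type \((3,3)\) subject to the Nebelung ranges \(4\le m<n\le 2m-3\), \(e=n-m+2\ge 3\) recorded in Theorem~\ref{t:LowCfg}, and Theorem~\ref{t:CmpLowDir} (for complex \(K\)) or Theorem~\ref{t:RealLowDir} (for real \(K\), where the hypothesis renders all \(N_i\), hence all \(L_i\), of type \(\delta\)) becomes applicable. In both cases these give
\[\mathrm{h}_3(L_1)=3^{(m-2)/2}\text{ if }k=0,\qquad \mathrm{h}_3(L_1)=3^{(m-3)/2}\text{ if }k=1,\]
\[\mathrm{h}_3(L_2)=3^{(e-1)/2},\qquad \mathrm{h}_3(L_3)=\mathrm{h}_3(L_4)=3,\]
together with \(e\equiv 1\pmod 2\) and \(m\equiv k\pmod 2\).

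Next I would match the two labellings. Since \(n\le 2m-3\) gives \(e\le m-1\), and in fact \(e\le m-2\) when \(k=1\) as seen in the proof of Theorem~\ref{t:LowCfg}, one has \((e-1)/2\le(m-2)/2\) respectively \((e-1)/2\le(m-3)/2\); because \((e-1)/2\ge 1\) this forces, in the group labelling, \(\mathrm{h}_3(L_1)\ge\mathrm{h}_3(L_2)\ge\mathrm{h}_3(L_3)=\mathrm{h}_3(L_4)=3\). Comparing the four class numbers with the hypothesis, where \(\mathrm{h}_3(L_1)=3^u\ge 3^v=\mathrm{h}_3(L_2)\ge 3=\mathrm{h}_3(L_3)=\mathrm{h}_3(L_4)\), and matching the two largest entries, we obtain \(3^u=\mathrm{h}_3(L_1)\) and \(3^v=\mathrm{h}_3(L_2)\) in the group labelling after the obvious renumbering. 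Solving the displayed formulae then yields \(m=2u+2\) for \(k=0\) and \(m=2u+3\) for \(k=1\), and \(e=2v+1\) in either case; since \(n=m+e-2\) this gives \(n=2u+2v+1\) for \(k=0\) and \(n=2u+2v+2\) for \(k=1\), and \(\mathrm{cc}(G)=n-m+1=e-1=2v\ge 2\). The asserted inequalities \(4\le m<n\le 2m-3\) and \(e=n-m+2\ge 3\) follow from \(u\ge v\ge 1\) together with \(e\le m-1\).

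Finally, to decide between the two cases I would use the metabelian identity \(\mathrm{h}_3(\mathrm{F}_3^1(K))=\lvert\mathrm{Cl}_3(\mathrm{F}_3^1(K))\rvert=\lvert\gamma_2(G)\rvert=\lvert G\rvert/\lvert G/\gamma_2(G)\rvert=3^{n-2}\). Hence \(3^w=3^{n-2}\), so \(w=n-2=2u+2v-1+k\), and therefore \(k=w-2u-2v+1\); concretely, \(w=2u+2v-1\) signals \(k=0\) and \(w=2u+2v\) signals \(k=1\), with the corresponding values of \(m\) and \(n\) as listed above. I expect the main obstacle to be precisely the opening move, the clean exclusion of the maximal-class case, for it is there that the type-\(\delta\) hypothesis on the real quadratic fields (and the non-existence of maximal class over complex fields) is genuinely needed; the rest is routine bookkeeping of parities and of the single admissible sorted labelling.
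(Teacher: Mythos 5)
Your proposal is correct and follows essentially the same route as the paper: exclude coclass $1$ by combining the type-$\delta$ hypothesis (respectively the complex case) with the principalisation constraints of Theorem \ref{t:QdrMaxDir} and Proposition \ref{p:QdrPrc}, then invert the formulas of Theorems \ref{t:CmpLowDir}/\ref{t:RealLowDir} and pin down $k$ via $\mathrm{h}_3(\mathrm{F}_3^1(K))=\lvert\gamma_2(G)\rvert=3^{n-2}$. Your explicit matching of the class-number labelling with the group-theoretic labelling (using $e\le m-1-k$) is a small extra care the paper leaves implicit, but it does not change the argument.
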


\begin{proof}

If either \(K\) is complex or if \(K\) is real and
all four totally real cubic fields \(L_1,\ldots,L_4\) are of type \(\delta\),
then in none of the four dihedral fields \(N_1,\ldots,N_4\)
the entire \(3\)-class group \(\mathrm{Cl}_3(K)\)
of the quadratic base field \(K\)
can become principal,
by proposition \ref{p:QdrPrc}.\\
If, however, the second \(3\)-class group
\(G=\mathrm{Gal}(\mathrm{F}_3^2(K)\vert K)\)
of \(K\) were of coclass \(\mathrm{cc}(G)=1\), then
\(K\) had a total principalisation
in at least three of the four unramified cyclic cubic extensions \(N_i\)
by corollary \ref{c:QdrMaxDir}.
Consequently, \(G\) must be of coclass \(\mathrm{cc}(G)\ge 2\), in the present situation.

According to theorem \ref{t:CmpLowDir} for complex quadratic fields
and to theorem \ref{t:RealLowDir} for real quadratic fields with \(\nu=0\),
that is \(L_1,L_2\) of type \(\delta\), we have on the one hand\\
\(3^u=\mathrm{h}_3(L_1)=3^{\frac{m-2-k}{2}}\) with \(0\le k\le 1\)
and on the other hand\\
\(3^v=\mathrm{h}_3(L_2)=3^{\frac{e-1}{2}}\).
For the group \(G\), we therefore obtain\\
an even index of nilpotence \(m=2u+2\ge 4\),
if \(k=0\), and\\
an odd index of nilpotence \(m=2u+3\ge 5\),
if \(k=1\).\\
The invariant \(e=2v+1\) is always odd.\\
Consequently, the \(3\)-exponent \(n=e+m-2\) of the group order is given by\\
\(n=2v+1+2u\) for \(k=0\) and
\(n=2v+1+2u+1\) for \(k=1\).

Finally, the invariant \(k\) is determined by
\(3^w=\mathrm{h}_3(\mathrm{F}_3^1(K))=\lvert\gamma_2(G)\rvert=3^{n-2}=3^{2u+2v+k-1}\),
that is \(k=w-2u-2v+1\).
\end{proof}


\subsection{Mixed principalisation}
\label{ss:RealLowInv}

The third inverse theorem characterises
the real quadratic base fields with invariant \(1\le\nu\le 2\),
where the second \(3\)-class group
\(G=\mathrm{Gal}(\mathrm{F}_3^2(K)\vert K)\) must be of coclass \(\mathrm{cc}(G)\ge 2\).

\begin{theorem}
\label{t:RealLowInv}

Let \(K\) be a real quadratic field
with elementary abelian bicyclic \(3\)-class group.
Suppose that two of the four
non-Galois totally real absolutely cubic subfields \(L_1,\ldots,L_4\)
of the unramified cyclic cubic extension fields \(N_1,\ldots,N_4\) of \(K\),
say \(L_3,L_4\), are of type \(\delta\)
with \(3\)-class numbers \(\mathrm{h}_3(L_3)=\mathrm{h}_3(L_4)=3\).
Assume that the remaining two absolutely cubic fields \(L_1,L_2\)
have \(3\)-class numbers \(\mathrm{h}_3(L_1)=3^u\) and \(\mathrm{h}_3(L_2)=3^v\)
with exponents \(u\ge v\ge 1\) and that
the ordered pair of their types is denoted by
\(T=(T_1,T_2)\in\lbrace(\alpha,\alpha),(\alpha,\delta),(\delta,\alpha)\rbrace\).
Then the second \(3\)-class group
\(G=\mathrm{Gal}(\mathrm{F}_3^2(K)\vert K)\) of \(K\)
is of coclass \(\mathrm{cc}(G)\ge 2\) with \(4\le m<n\le 2m-3\), \(e=n-m+2\ge 3\),
and the invariants \(m,n\), and \(k\) are given by
\[
\begin{cases}
m=2u+1\ge 5,\ n=2u+2v-1\ge 7,\ k=0\text{ or }\\
m=2u+2\ge 6,\ n=2u+2v\ge 8,\ k=1,&\text{ if }T=(\alpha,\alpha),\\
m=2u+1\ge 5,\ n=2u+2v\ge 6,\ k=0,&\text{ if }T=(\alpha,\delta),\\
m=2u+2\ge 6,\ n=2u+2v\ge 8,\ k=0,&\text{ if }T=(\delta,\alpha).
\end{cases}
\]
In the first case, the invariant \(k=w-2u-2v+3\)
is determined by \(3\)-class number \(\mathrm{h}_3(\mathrm{F}_3^1(K))=3^{w}\)
of the first Hilbert \(3\)-class field of \(K\).\\
The invariant \(e\) is given by
\[e=
\begin{cases}
2v\ge 4,&\text{ if }L_2\text{ is of type }\alpha,\text{ and}\\
2v+1\ge 3,&\text{ if }L_2\text{ is of type }\delta.
\end{cases}
\]
In particular,
\(u\ge 2\) generally,
and \(v\ge 2\) for \(L_2\) of type \(\alpha\).

\end{theorem}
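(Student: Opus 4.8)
The plan is to imitate the proofs of theorems \ref{t:QdrMaxInv} and \ref{t:QdrLowInv}: first fix the coclass of \(G\), then read off \(e\) and \(m\) (hence \(n\)) from theorem \ref{t:RealLowDir}, and finally determine \(k\), invoking \(\mathrm{h}_3(\mathrm{F}_3^1(K))\) only where the capitulation does not already pin it down. Since the type pair \(T\) is one of \((\alpha,\alpha)\), \((\alpha,\delta)\), \((\delta,\alpha)\), at least one of \(L_1,L_2\) is of type \(\alpha\) while \(L_3,L_4\) are of type \(\delta\); so, by proposition \ref{p:QdrPrc}, the invariant \(\nu\) of \(K\) equals \(2\) if \(T=(\alpha,\alpha)\) and \(1\) otherwise. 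In both cases \(\nu<3\), whence \(G\) is neither abelian nor of coclass \(1\) (theorem \ref{t:QdrMaxDir} and corollary \ref{c:QdrMaxDir}); since the second \(3\)-class group is always metabelian, \(\mathrm{cc}(G)\ge 2\), so that \(4\le m<n\le 2m-3\) and \(e=n-m+2\ge 3\) as in theorem \ref{t:LowDir}.

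Next I would apply theorem \ref{t:RealLowDir}. The hypothesis \(\mathrm{h}_3(L_3)=\mathrm{h}_3(L_4)=3\) is consistent with \(L_3,L_4\) being of type \(\delta\). For \(L_2\): if it is of type \(\alpha\) then \(3^v=\mathrm{h}_3(L_2)=3^{e/2}\), so \(e=2v\) is even, which with \(e\ge 3\) forces \(e\ge 4\) and \(v\ge 2\); if it is of type \(\delta\) then \(3^v=3^{(e-1)/2}\), so \(e=2v+1\). This is the asserted value of \(e\). For \(L_1\), theorem \ref{t:RealLowDir} gives \(3^u=\mathrm{h}_3(L_1)\) equal to \(3^{(m-1)/2}\) if \(L_1\) is of type \(\alpha\) and \(k=0\), to \(3^{(m-2)/2}\) if \(L_1\) is of type \(\alpha\) and \(k=1\) or of type \(\delta\) and \(k=0\), and to \(3^{(m-3)/2}\) if \(L_1\) is of type \(\delta\) and \(k=1\); hence \(m\), and then \(n=m+e-2\), are determined once \(k\) is fixed.

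It remains to determine \(k\). For \(T\in\lbrace(\alpha,\delta),(\delta,\alpha)\rbrace\) we have \(\nu=1\), so the principalisation type of \(K\) belongs to Nebelung's sections c or d, which enforce \(k=0\) (cf. the discussion opening section \ref{s:InvThmCub}, following \cite{Ne}; see also \cite{Ma3}); this gives \(m=2u+1\) when \(L_1\) is of type \(\alpha\) and \(m=2u+2\) when it is of type \(\delta\). For \(T=(\alpha,\alpha)\), \(\nu=2\), and the capitulation type (section a or b) leaves \(k\) open; here I would use Artin's reciprocity law in the form \(\gamma_2(G)=\mathrm{Gal}(\mathrm{F}_3^2(K)\vert\mathrm{F}_3^1(K))\simeq\mathrm{Cl}_3(\mathrm{F}_3^1(K))\), so that \(3^w=\mathrm{h}_3(\mathrm{F}_3^1(K))=\lvert\gamma_2(G)\rvert=\lvert G\rvert/3^2=3^{n-2}\), i.e. \(w=n-2\); since \(n=2u+2v-1\) for \(k=0\) and \(n=2u+2v\) for \(k=1\), this yields \(k=w-2u-2v+3\). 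Finally, the inequalities \(4\le m<n\le 2m-3\) and \(e\ge 3\), with \(e\) even exactly when \(L_2\) is of type \(\alpha\), force \(u\ge 2\) in every case and \(v\ge 2\) whenever \(L_2\) is of type \(\alpha\), and they reproduce the stated lower bounds on \(m\) and \(n\).

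The hard part is the implication \(\nu=1\Rightarrow k=0\) in the cases \(T\ne(\alpha,\alpha)\). The class-number formulas of theorem \ref{t:RealLowDir} by themselves admit both \(k=0\) and \(k=1\) there, and excluding \(k=1\) rests on Nebelung's classification \cite{Ne} of the metabelian \(3\)-groups of coclass \(\ge 2\) with abelianisation of type \((3,3)\) by their principalisation type, specifically on the fact that the capitulation patterns of the sections c and d are realised only by groups with bicyclic centre \(\zeta_1(G)\). Everything else is a routine specialisation of the direct theorems of sections \ref{s:Dir} and \ref{s:Qdr}.
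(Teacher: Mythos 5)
Your proposal is correct and follows essentially the same route as the paper's proof: exclude coclass \(1\) via \(\nu\le 2\) and corollary \ref{c:QdrMaxDir}, read off \(e\) and \(m\) from theorem \ref{t:RealLowDir}, appeal to Nebelung's classification (Satz 6.14) to force \(k=0\) for the sections c and d corresponding to \(T\in\lbrace(\alpha,\delta),(\delta,\alpha)\rbrace\), and determine \(k\) from \(\mathrm{h}_3(\mathrm{F}_3^1(K))=3^{n-2}\) only in the case \(T=(\alpha,\alpha)\). You have also correctly identified the one non-routine ingredient, namely the external input from \cite{Ne} excluding \(k=1\) for the mixed type pairs, which is precisely the citation the paper relies on.
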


\begin{proof}

If \(K\) is a real quadratic base field and
at least two of the totally real cubic fields \(L_1,\ldots,L_4\)
are of type \(\delta\),
then the number of dihedral fields \(N_i\vert K\) with total principalisation
is given by the invariant \(\nu\le 2\), by proposition \ref{p:QdrPrc}.\\
If the second \(3\)-class group of \(K\)
\(G=\mathrm{Gal}(\mathrm{F}_3^2(K)\vert K)\)
were of coclass \(\mathrm{cc}(G)=1\), then
\(K\) had an invariant \(\nu\ge 3\),
by corollary \ref{c:QdrMaxDir}.\\
Hence, \(G\) must be of coclass \(\mathrm{cc}(G)\ge 2\), in the present situation.

By theorem \ref{t:RealLowDir} for real quadratic fields with \(1\le\nu\le 2\),
we have on the one hand
\[3^u=\mathrm{h}_3(L_1)=
\begin{cases}
3^{\frac{m-1}{2}}&\text{ for }L_1\text{ of type }\alpha,\ k=0,\\
3^{\frac{m-2}{2}}&\text{ for }L_1\text{ of type }\alpha,\ k=1,\\
3^{\frac{m-2}{2}}&\text{ for }L_1\text{ of type }\delta,\ k=0,\\
3^{\frac{m-3}{2}}&\text{ for }L_1\text{ of type }\delta,\ k=1,
\end{cases}
\]
and on the other hand
\[3^v=\mathrm{h}_3(L_2)=
\begin{cases}
3^{\frac{e}{2}}&\text{ for }L_2\text{ of type }\alpha,\\
3^{\frac{e-1}{2}}&\text{ for }L_2\text{ of type }\delta.\\
\end{cases}
\]
Since the pair \(T=(\delta,\delta)\) has been treated in theorem \ref{t:QdrLowInv} already,
we must have \(T=(\delta,\alpha)\), if \(L_1\) is of type \(\delta\).
Then the last case, \(k=1\), is impossible for the following reason.
According to \cite[p.208, Satz 6.14, p.189 ff]{Ne},
the principalisation types of the sections c,d,
which correspond to the pairs \(T=(\delta,\alpha)\) and \(T=(\alpha,\delta)\),
can only occur with the value \(k=0\).
For the group \(G\), we therefore obtain\\
an even index of nilpotence \(m=2u+2\ge 4\)
for \(L_1\) of type \(\delta\), \(k=0\) or \(L_1\) of type \(\alpha\), \(k=1\)\\
and an odd index of nilpotence \(m=2u+1\ge 5\) with \(u\ge 2\),
for \(L_1\) of type \(\alpha\), \(k=0\).\\
The invariant \(e\) is
even \(e=2v\ge 4\) with \(v\ge 2\) for \(L_2\) of type \(\alpha\) and\\
odd \(e=2v+1\ge 3\) for \(L_2\) of type \(\delta\).\\
Further, for a group \(G\) of coclass \(\mathrm{cc}(G)\ge 3\) with \(e\ge 4\),
we must have an index of nilpotence \(m\ge 5\), since \(e\le m-1\).
Therefore, the \(3\)-exponent \(n=e+m-2\) of the group order is given by
\[n=
\begin{cases}
2v+2u+1-2\ge 4+5-2=7&\text{ for }T=(\alpha,\alpha),\ k=0,\\
2v+2u+2-2\ge 4+6-2=8&\text{ for }T=(\alpha,\alpha),\ k=1,\\
2v+1+2u+1-2\ge 3+5-2=6&\text{ for }T=(\alpha,\delta),\ k=0,\\
2v+2u+2-2\ge 4+6-2=8&\text{ for }T=(\delta,\alpha),\ k=0.
\end{cases}\]

Finally, in the case \(T=(\alpha,\alpha)\), the invariant \(k\) is determined by
\(3^w=\mathrm{h}_3(\mathrm{F}_3^1(K))=\lvert\gamma_2(G)\rvert=3^{n-2}=3^{2u+2v+k-3}\),
and thus \(k=w-2u-2v+3\).
\end{proof}


\section{Computational results for quadratic fields of type \((3,3)\)}
\label{s:NumRstCub}

In this section we apply the methods of section \ref{s:InvThmCub}
for the computation of the structure
of the second \(3\)-class group
\(G=\mathrm{Gal}(\mathrm{F}_3^2(K)\vert K)\)
of a quadratic base field \(K=\mathbb{Q}(\sqrt{D})\)
with \(3\)-class group of type \((3,3)\)
to the range \(-10^6<D<10^7\) of discriminants
and we summarise the concrete numerical results
of these extensive computations,
which exceed all previous numerical investigations by far.
The history of determining principalisation types is shown in table \ref{tab:PrincipalisationHistory}.

\renewcommand{\arraystretch}{1.2}
\begin{table}[ht]
\caption{History of investigating quadratic fields of type \((3,3)\)}
\label{tab:PrincipalisationHistory}
\begin{center}
\begin{tabular}{|l|c|c|r|c|r|}
\hline
 \multicolumn{2}{|c|}{History}    & \multicolumn{2}{|c|}{complex} & \multicolumn{2}{|c|}{real}    \\
 authors            & references  & range            & number     & range            & number     \\
\hline                            
 Scholz, Taussky    & \cite{SoTa} & \(-10\,000<D<0\) &      \(2\) &                  &            \\
 Heider, Schmithals & \cite{HeSm} & \(-20\,000<D<0\) &     \(13\) & \(0<D<100\,000\) &      \(5\) \\
 Brink              & \cite{Br}   & \(-96\,000<D<0\) &     \(66\) &                  &            \\
 Mayer              & \cite{Ma1}  & \(-30\,000<D<0\) &     \(35\) &                  &            \\
 Mayer              & \cite{Ma}   &                  &            & \(0<D<200\,000\) &     \(16\) \\
 Mayer              & \cite{Ma4}  & \(-10^6<D<0\)    & \(2\,020\) & \(0<D<10^7\)     & \(2\,576\) \\
\hline
\end{tabular}
\end{center}
\end{table}


Among the \(2\,576\) real quadratic number fields \(K=\mathbb{Q}(\sqrt{D})\)
with discriminant \(D<10^7\)
and \(3\)-class group \(\mathrm{Cl}_3(K)\) of type \((3,3)\),
the dominating part of \(2\,303\) fields, that is \(89.4\%\),
has at least a threefold total principalisation in the dihedral fields \(N_1,\ldots,N_4\).
Consequently, the second \(3\)-class group
\(G=\mathrm{Gal}(\mathrm{F}_3^2(K)\vert K)\) is
a vertex on the coclass graph \(\mathcal{G}(3,1)\)
of all \(3\)-groups of coclass \(\mathrm{cc}(G)=1\) \cite{EkFs}.

\begin{example}
\label{ex:QdrMaxInv}

For each principalisation type \(\varkappa\) with invariant \(3\le\nu\le 4\),
table \ref{tab:QdrMaxInv} shows
the minimal discriminant \(D\) and the absolute frequency
of real quadratic number fields \(K=\mathbb{Q}(\sqrt{D})\)
with \(3\)-class group of type \((3,3)\)
in the range \(0<D<10^7\) of discriminants,
whose second  \(3\)-class group \(G=\mathrm{Gal}(\mathrm{F}_3^2(K)\vert K)\)
is of coclass \(\mathrm{cc}(G)=1\)
with index of nilpotence \(m\) and invariant \(k\)
and realises the given type \(\varkappa\).

Every principalisation type \(\varkappa\)
is characterised by a lower case section letter \cite{Ne}
and by an additional digit \cite{Ma3}.
The principalisation types \(\mathrm{a}.3\) and \(\mathrm{a}.3^\ast\)
differ by the structure of the \(3\)-class group
\(\mathrm{Cl}_3(N_1)\) of the first dihedral field \(N_1\),
which is nearly homocyclic of type \((9,3)\) in the first case
and elementary abelian of type \((3,3,3)\) in the last case.

We start with the principalisation type \(\varkappa\),
the corresponding quadruplet of types of the totally real cubic fields \(L_1,\ldots,L_4\),
according to proposition \ref{p:QdrPrc},
and the \(3\)-exponents \(u,w\) of the \(3\)-class numbers
\(\mathrm{h}_3(L_1)=3^u\) and \(\mathrm{h}_3(\mathrm{F}_3^1(K))=3^w\),
which have been computed together with the class group structures
as experimental data by means of PARI/GP \cite{PARI},
and calculate the index of nilpotence \(m\) and the invariant \(k\)
of the second \(3\)-class group \(G\) of \(K\)
by the formulas of theorem \ref{t:QdrMaxInv}:
\(m=2u+2\ge 6\), \(k=w-2u+1=1\), if \(L_1\) is of type \(\alpha\), and
\(m=2u+2\ge 4\), \(k=w-2u=0\), if \(L_1\) is of type \(\delta\).
The case of odd \(m=2u+1\ge 3\) and \(k=w-2u+1=0\)
for the principalisation type \(\mathrm{a}.1\)
does not occur and is probably impossible for quadratic base fields.

Each of these principalisation types,
with the only exception of \(\mathrm{a}.3^\ast\),
can occur with different indices of nilpotence \(m\),
which gives rise to \textit{excited states} indicated by arrows \(\uparrow,\uparrow^2,\ldots\).
Types without references have been unknown, up to now.

\end{example}

\begin{table}[ht]
\caption{Second \(3\)-class groups with invariant \(3\le\nu\le 4\) for \(D>0\)}
\label{tab:QdrMaxInv}
\begin{center}
\begin{tabular}{|lcc|rcr|rc|r|c|lr|}
\hline
 type & \(\varkappa\) & types of \(L_i\)               & \(u\) & \(\mathrm{Cl}_3(\mathrm{F}_3^1(K))\) & \(w\) & \(m\) & \(k\) &      min. \(D\) & ref.        & \multicolumn{2}{|c|}{frequency} \\
\hline
 a.1  & \((0000)\)    & \((\alpha\alpha\alpha\alpha)\) & \(2\) &                            \((9,9)\) & \(4\) & \(6\) & \(1\) &     \(62\,501\) & \cite{HeSm} &  &  \(147\) \\
\cline{11-12}
 a.2  & \((1000)\)    & \((\delta\alpha\alpha\alpha)\) & \(1\) &                            \((3,3)\) & \(2\) & \(4\) & \(0\) &     \(72\,329\) & \cite{HeSm} & \multirow{2}{*}{\(\Bigr\rbrace\)} & \multirow{2}{*}{\(1\,386\)} \\
 a.3  & \((2000)\)    & \((\delta\alpha\alpha\alpha)\) & \(1\) &                            \((3,3)\) & \(2\) & \(4\) & \(0\) &     \(32\,009\) & \cite{HeSm} &&\\
\hline
 a.3* & \((2000)\)    & \((\delta\alpha\alpha\alpha)\) & \(1\) &                            \((3,3)\) & \(2\) & \(4\) & \(0\) &    \(142\,097\) & \cite{Ma}   &  &  \(697\) \\
\hline
 a.1\(\uparrow\)  & \((0000)\)    & \((\alpha\alpha\alpha\alpha)\) & \(3\) &                          \((27,27)\) & \(6\) & \(8\) & \(1\) & \(2\,905\,160\) &             &  &    \(1\) \\
\cline{11-12}
 a.2\(\uparrow\)  & \((1000)\)    & \((\delta\alpha\alpha\alpha)\) & \(2\) &                            \((9,9)\) & \(4\) & \(6\) & \(0\) &    \(790\,085\) &             & \multirow{2}{*}{\(\Bigr\rbrace\)} & \multirow{2}{*}{\(72\)} \\
 a.3\(\uparrow\)  & \((2000)\)    & \((\delta\alpha\alpha\alpha)\) & \(2\) &                            \((9,9)\) & \(4\) & \(6\) & \(0\) &    \(494\,236\) &             &&\\
\hline
\multicolumn{10}{|r|}{total:}																																																																	&  & \(2\,303\) \\
\hline
\end{tabular}
\end{center}
\end{table}

Among the \(2\,576\) real quadratic number fields \(K=\mathbb{Q}(\sqrt{D})\)
with discriminant \(0<D<10^7\)
and \(3\)-class group \(\mathrm{Cl}_3(K)\) of type \((3,3)\),
there is a modest part of \(206\) fields, that is \(8.0\%\),
which do not have a total principalisation in the dihedral fields \(N_1,\ldots,N_4\).
Therefore the second \(3\)-class group
\(G=\mathrm{Gal}(\mathrm{F}_3^2(K)\vert K)\) is
a vertex on one of the coclass graphs \(\mathcal{G}(3,r)\)
with \(r\in\lbrace 2,4,6\rbrace\) \cite{EkLg,DEF}.
The same is true for the entire set of all \(2\,020\)
complex quadratic number fields \(K=\mathbb{Q}(\sqrt{D})\)
with discriminant \(-10^6<D<0\)
and \(3\)-class group \(\mathrm{Cl}_3(K)\) of type \((3,3)\).

\begin{example}
\label{CmpLowInv}

For each principalisation type \(\varkappa\) with invariant \(\nu=0\),
table \ref{tab:CmpLowInv} shows
the minimal value of the discriminant \(\lvert D\rvert\)
and the absolute frequency
of complex quadratic number fields \(K=\mathbb{Q}(\sqrt{D})\)
with \(3\)-class group of type \((3,3)\)
in the range \(-10^6<D<0\) of discriminants,
whose second \(3\)-class group \(G=\mathrm{Gal}(\mathrm{F}_3^2(K)\vert K)\)
is of coclass \(\mathrm{cc}(G)\ge 2\)
with index of nilpotence \(m\) and invariants \(e\) and \(k\)
and realises the given type \(\varkappa\).
Every principalisation type \(\varkappa\) is characterised
by an upper case section letter \cite{SoTa}
and an additional numerical identifier \cite{Ma1,Ma3}.

Starting with the \(3\)-exponents \(u,v,w\) of the \(3\)-class numbers
\(\mathrm{h}_3(L_1)=3^u\), \(\mathrm{h}_3(L_2)=3^v\),
and \(\mathrm{h}_3(\mathrm{F}_3^1(K))=3^w\),
which have been computed together with the class group structures
as experimental data with the aid of PARI/GP \cite{PARI},
we calculate the index of nilpotence \(m\),
the \(3\)-exponent \(n\) of the group order \(\lvert G\rvert=3^n\),
and the invariants \(e\) and \(k\)
of the second \(3\)-class group \(G\) of \(K\)
by means of the formulas
\(m=2u+k+2\), \(n=2u+2v+k+1\), \(e=2v+1\), \(k=w-2u-2v+1\),
according to theorem \ref{t:QdrLowInv}.

Each of these principalisation types,
with the only exception of \(\mathrm{D}.5\) and \(\mathrm{D}.10\),
can appear with different indices of nilpotence \(m\),
the types of sections F,G,H even with different values of the invariant \(e\).
We point out that
the principalisation types \(\mathrm{G}.16\), \(\mathrm{G}.19\), and \(\mathrm{H}.4\)
with odd \(m=7\) and \(n=10=2m-4\) can occur in
a regular variant (r) with \(\mathrm{Cl}_3(\mathrm{F}_3^1(K))\) of type \((27,9,9,3)\)
and an irregular variant (i) with \(\mathrm{Cl}_3(\mathrm{F}_3^1(K))\) of type \((9,9,9,9)\)
\cite[p.131, Satz 4.2.4]{Ne}.

\noindent
Finally, the trailing three principalisation types are associated with the
biggest orders \(3^{12}\) and \(3^{13}\) of second \(3\)-class groups \(G\),
known up to now.
These results realise our suggestion in \cite[p.77, 3]{Ma1}

\end{example}

\begin{table}[ht]
\caption{Second \(3\)-class groups with invariant \(\nu=0\) for \(D<0\)}
\label{tab:CmpLowInv}
\begin{center}
\begin{tabular}{|lc|rrcr|rrrc|r|c|lr|}
\hline
 type & \(\varkappa\) & \(u\) & \(v\) & \(\mathrm{Cl}_3(\mathrm{F}_3^1(K))\) & \(w\) & \(e\) & \(m\) &  \(n\) & \(k\) & min. \(\lvert D\rvert\) & ref.        & \multicolumn{2}{|c|}{frequency} \\
\hline
 D.5  & \((4224)\)    & \(1\) & \(1\) &                          \((3,3,3)\) & \(3\) & \(3\) & \(4\) &  \(5\) & \(0\) &             \(12\,131\) & \cite{HeSm} & & \(269\) \\
 D.10 & \((2241)\)    & \(1\) & \(1\) &                          \((3,3,3)\) & \(3\) & \(3\) & \(4\) &  \(5\) & \(0\) &              \(4\,027\) & \cite{SoTa} & & \(667\) \\
\cline{13-14}
 E.6  & \((1313)\)    & \(2\) & \(1\) &                          \((9,9,3)\) & \(5\) & \(3\) & \(6\) &  \(7\) & \(0\) &             \(15\,544\) & \cite{HeSm} & \multirow{2}{*}{\(\Bigr\rbrace\)} & \multirow{2}{*}{\(186\)} \\
 E.14 & \((2313)\)    & \(2\) & \(1\) &                          \((9,9,3)\) & \(5\) & \(3\) & \(6\) &  \(7\) & \(0\) &             \(16\,627\) & \cite{HeSm} & &         \\
\cline{13-14}
 E.8  & \((1231)\)    & \(2\) & \(1\) &                          \((9,9,3)\) & \(5\) & \(3\) & \(6\) &  \(7\) & \(0\) &             \(34\,867\) &             & \multirow{2}{*}{\(\Bigr\rbrace\)} & \multirow{2}{*}{\(197\)} \\
 E.9  & \((2231)\)    & \(2\) & \(1\) &                          \((9,9,3)\) & \(5\) & \(3\) & \(6\) &  \(7\) & \(0\) &              \(9\,748\) & \cite{SoTa} & &         \\
\cline{13-14}
 F.7  & \((3443)\)    & \(2\) & \(2\) &                        \((9,9,9,3)\) & \(7\) & \(5\) & \(6\) &  \(9\) & \(0\) &            \(124\,363\) &             & \multirow{4}{*}{\Huge\(\Bigr\rbrace\)} & \multirow{4}{*}{\(78\)} \\
 F.11 & \((1143)\)    & \(2\) & \(2\) &                        \((9,9,9,3)\) & \(7\) & \(5\) & \(6\) &  \(9\) & \(0\) &             \(27\,156\) &\cite{Br,Ma1}& &         \\
 F.12 & \((1343)\)    & \(2\) & \(2\) &                        \((9,9,9,3)\) & \(7\) & \(5\) & \(6\) &  \(9\) & \(0\) &             \(31\,908\) & \cite{Br}   & &         \\
 F.13 & \((3143)\)    & \(2\) & \(2\) &                        \((9,9,9,3)\) & \(7\) & \(5\) & \(6\) &  \(9\) & \(0\) &             \(67\,480\) & \cite{Br}   & &         \\
\cline{13-14}
 G.16 & \((4231)\)    & \(2\) & \(1\) &                         \((27,9,3)\) & \(6\) & \(3\) & \(7\) &  \(8\) & \(1\) &             \(17\,131\) & \cite{HeSm} & &  \(79\) \\
 G.19 & \((2143)\)    & \(1\) & \(1\) &                        \((3,3,3,3)\) & \(4\) & \(3\) & \(5\) &  \(6\) & \(1\) &             \(12\,067\) & \cite{HeSm} & &  \(94\) \\
 H.4  & \((4443)\)    & \(1\) & \(1\) &                          \((9,3,3)\) & \(4\) & \(3\) & \(5\) &  \(6\) & \(1\) &              \(3\,896\) & \cite{HeSm} & & \(297\) \\
\hline
 E.6\(\uparrow\)  & \((1313)\)    & \(3\) & \(1\) &                        \((27,27,3)\) & \(7\) & \(3\) & \(8\) &  \(9\) & \(0\) &            \(268\,040\) &             & \multirow{2}{*}{\(\Bigr\rbrace\)} & \multirow{2}{*}{\(15\)} \\
 E.14\(\uparrow\) & \((2313)\)    & \(3\) & \(1\) &                        \((27,27,3)\) & \(7\) & \(3\) & \(8\) &  \(9\) & \(0\) &            \(262\,744\) &             & &         \\
\cline{13-14}
 E.8\(\uparrow\)  & \((1231)\)    & \(3\) & \(1\) &                        \((27,27,3)\) & \(7\) & \(3\) & \(8\) &  \(9\) & \(0\) &            \(370\,740\) &             & \multirow{2}{*}{\(\Bigr\rbrace\)} & \multirow{2}{*}{\(13\)} \\
 E.9\(\uparrow\)  & \((2231)\)    & \(3\) & \(1\) &                        \((27,27,3)\) & \(7\) & \(3\) & \(8\) &  \(9\) & \(0\) &            \(297\,079\) &             & &         \\
\cline{13-14}
 F.7\(\uparrow\)  & \((3443)\)    & \(3\) & \(2\) &                      \((27,27,9,3)\) & \(9\) & \(5\) & \(8\) & \(11\) & \(0\) &            \(469\,816\) &             & \multirow{4}{*}{\Huge\(\Bigr\rbrace\)} & \multirow{4}{*}{\(14\)} \\
 F.11\(\uparrow\) & \((1143)\)    & \(3\) & \(2\) &                      \((27,27,9,3)\) & \(9\) & \(5\) & \(8\) & \(11\) & \(0\) &            \(469\,787\) &             & &         \\
 F.12\(\uparrow\) & \((1343)\)    & \(3\) & \(2\) &                      \((27,27,9,3)\) & \(9\) & \(5\) & \(8\) & \(11\) & \(0\) &            \(249\,371\) &             & &         \\
 F.13\(\uparrow\) & \((3143)\)    & \(3\) & \(2\) &                      \((27,27,9,3)\) & \(9\) & \(5\) & \(8\) & \(11\) & \(0\) &            \(159\,208\) &             & &         \\
\cline{13-14}
 G.16\(\uparrow\) & \((4231)\)    & \(3\) & \(1\) &                        \((81,27,3)\) & \(8\) & \(3\) & \(9\) & \(10\) & \(1\) &            \(819\,743\) &             & &   \(2\) \\
 H.4\(\uparrow\)  & \((3313)\)    & \(2\) & \(1\) &                         \((27,9,3)\) & \(6\) & \(3\) & \(7\) &  \(8\) & \(1\) &             \(21\,668\) &\cite{Br,Ma1}& &  \(63\) \\
 H.4\(\uparrow^2\)  & \((3313)\)    & \(3\) & \(1\) &                        \((81,27,3)\) & \(8\) & \(3\) & \(9\) & \(10\) & \(1\) &            \(446\,788\) &             & &   \(6\) \\
\hline
 G.16r & \((1243)\)    & \(2\) & \(2\) &                       \((27,9,9,3)\) & \(8\) & \(5\) & \(7\) & \(10\) & \(1\) &            \(290\,703\) &             & \multirow{3}{*}{\(\Biggr\rbrace\)} & \multirow{3}{*}{\(19\)} \\
 G.19r & \((2143)\)    & \(2\) & \(2\) &                       \((27,9,9,3)\) & \(8\) & \(5\) & \(7\) & \(10\) & \(1\) &             \(96\,827\) &             & &         \\
 H.4r  & \((3343)\)    & \(2\) & \(2\) &                       \((27,9,9,3)\) & \(8\) & \(5\) & \(7\) & \(10\) & \(1\) &            \(256\,935\) &             & &         \\
\cline{13-14}
 G.16i & \((1243)\)    & \(2\) & \(2\) &                        \((9,9,9,9)\) & \(8\) & \(5\) & \(7\) & \(10\) & \(1\) &            \(135\,059\) &             & \multirow{3}{*}{\(\Biggr\rbrace\)} & \multirow{3}{*}{\(15\)} \\
 G.19i & \((2143)\)    & \(2\) & \(2\) &                        \((9,9,9,9)\) & \(8\) & \(5\) & \(7\) & \(10\) & \(1\) &            \(199\,735\) &             & &         \\
 H.4i  & \((3343)\)    & \(2\) & \(2\) &                        \((9,9,9,9)\) & \(8\) & \(5\) & \(7\) & \(10\) & \(1\) &            \(186\,483\) &             & &         \\
\hline
 F.12\(\uparrow^2\) & \((1343)\)    & \(3\) & \(3\) &                     \((27,27,27,9)\) &\(11\) & \(7\) & \(8\) & \(13\) & \(0\) &            \(423\,640\) &             & &   \(1\) \\
 G.19r\(\uparrow\) & \((2143)\)    & \(3\) & \(2\) &                      \((81,27,9,3)\) &\(10\) & \(5\) & \(9\) & \(12\) & \(1\) &            \(509\,160\) &             & &   \(2\) \\
 H.4r\(\uparrow\)  & \((3343)\)    & \(3\) & \(2\) &                      \((81,27,9,3)\) &\(10\) & \(5\) & \(9\) & \(12\) & \(1\) &            \(678\,804\) &             & &   \(3\) \\
\hline
\multicolumn{12}{|r|}{total:}																																																				 	  &  & \(2\,020\) \\
\hline
\end{tabular}
\end{center}
\end{table}

\begin{example}
\label{RealLowInv}

For each principalisation type \(\varkappa\) with invariant \(\nu=0\),
table \ref{tab:RealLowInv} shows
the minimal discriminant \(D\) and the absolute frequency of
real quadratic number fields \(K=\mathbb{Q}(\sqrt{D})\)
with \(3\)-class group of type \((3,3)\)
in the range \(0<D<10^7\) of discriminants,
whose second \(3\)-class group \(G=\mathrm{Gal}(\mathrm{F}_3^2(K)\vert K)\)
is of coclass \(\mathrm{cc}(G)\ge 2\)
with index of nilpotence \(m\) and invariants \(e\) and \(k\)
and realises the given type \(\varkappa\).
The table entries correspond to those of table \ref{tab:CmpLowInv}
and are calculated similarly by the formulas of theorem \ref{t:QdrLowInv}.
These cases have been completely unknown, up to now.

\end{example}

\newpage

\begin{table}[ht]
\caption{Second \(3\)-class groups with invariant \(\nu=0\) for \(D>0\)}
\label{tab:RealLowInv}
\begin{center}
\begin{tabular}{|lc|rrcr|rrrc|r|lr|}
\hline
 type & \(\varkappa\) & \(u\) & \(v\) & \(\mathrm{Cl}_3(\mathrm{F}_3^1(K))\) & \(w\) & \(e\) & \(m\) &  \(n\) & \(k\) &      min. \(D\) & \multicolumn{2}{|c|}{frequency} \\
\hline
 D.5  & \((4224)\)    & \(1\) & \(1\) &                          \((3,3,3)\) & \(3\) & \(3\) & \(4\) &  \(5\) & \(0\) &    \(631\,769\) &  &  \(47\) \\
 D.10 & \((2241)\)    & \(1\) & \(1\) &                          \((3,3,3)\) & \(3\) & \(3\) & \(4\) &  \(5\) & \(0\) &    \(422\,573\) &  &  \(93\) \\
\cline{12-13}
 E.6  & \((1313)\)    & \(2\) & \(1\) &                          \((9,9,3)\) & \(5\) & \(3\) & \(6\) &  \(7\) & \(0\) & \(5\,264\,069\) & \multirow{2}{*}{\(\Bigr\rbrace\)} & \multirow{2}{*}{\(7\)} \\
 E.14 & \((2313)\)    & \(2\) & \(1\) &                          \((9,9,3)\) & \(5\) & \(3\) & \(6\) &  \(7\) & \(0\) & \(3\,918\,837\) &&\\
\cline{12-13}
 E.8  & \((1231)\)    & \(2\) & \(1\) &                          \((9,9,3)\) & \(5\) & \(3\) & \(6\) &  \(7\) & \(0\) & \(6\,098\,360\) & \multirow{2}{*}{\(\Bigr\rbrace\)} & \multirow{2}{*}{\(14\)} \\
 E.9  & \((2231)\)    & \(2\) & \(1\) &                          \((9,9,3)\) & \(5\) & \(3\) & \(6\) &  \(7\) & \(0\) &    \(342\,664\) &&\\
\cline{12-13}
 F.13 & \((3143)\)    & \(2\) & \(2\) &                        \((9,9,9,3)\) & \(7\) & \(5\) & \(6\) &  \(9\) & \(0\) & \(8\,321\,505\) &  &   \(1\) \\
 G.16 & \((4231)\)    & \(2\) & \(1\) &                         \((27,9,3)\) & \(6\) & \(3\) & \(7\) &  \(8\) & \(1\) & \(8\,711\,453\) &  &   \(2\) \\
 G.19 & \((2143)\)    & \(1\) & \(1\) &                        \((3,3,3,3)\) & \(4\) & \(3\) & \(5\) &  \(6\) & \(1\) &    \(214\,712\) &  &  \(11\) \\
 H.4  & \((4443)\)    & \(1\) & \(1\) &                          \((9,3,3)\) & \(4\) & \(3\) & \(5\) &  \(6\) & \(1\) &    \(957\,013\) &  &  \(27\) \\
\hline
 F.13\(\uparrow\) & \((3143)\)    & \(3\) & \(2\) &                      \((27,27,9,3)\) & \(9\) & \(5\) & \(8\) & \(11\) & \(0\) & \(8\,127\,208\) &  &   \(1\) \\
 H.4\(\uparrow\)  & \((3313)\)    & \(2\) & \(1\) &                         \((27,9,3)\) & \(6\) & \(3\) & \(7\) &  \(8\) & \(1\) & \(1\,162\,949\) &  &   \(3\) \\
\hline
\multicolumn{11}{|r|}{total:}																																																				 	  &  & \(206\) \\
\hline
\end{tabular}
\end{center}
\end{table}

Among the \(2\,576\) real quadratic number fields \(K=\mathbb{Q}(\sqrt{D})\)
with discriminant \(D<10^7\)
and \(3\)-class group \(\mathrm{Cl}_3(K)\) of type \((3,3)\),
only a small part of \(67\) fields, that is \(2.6\%\),
has a single or double total principalisation in the dihedral fields \(N_1,\ldots,N_4\).
Consequently, the second \(3\)-class group
\(G=\mathrm{Gal}(\mathrm{F}_3^2(K)\vert K)\) is
a vertex on one of the coclass graphs \(\mathcal{G}(3,r)\)
with \(r\in\lbrace 2,3,4\rbrace\) \cite{EkLg,DEF}.

\begin{example}
\label{ex:RealLow}

For each principalisation type \(\varkappa\) with invariant \(1\le\nu\le 2\),
table \ref{tab:RealLow} shows
the minimal discriminant \(D\) and the absolute frequency
of real quadratic number fields \(K=\mathbb{Q}(\sqrt{D})\)
with \(3\)-class group of type \((3,3)\)
in the range \(0<D<10^7\) of discriminants,
whose second \(3\)-class group \(G=\mathrm{Gal}(\mathrm{F}_3^2(K)\vert K)\)
is of coclass \(\mathrm{cc}(G)\ge 2\)
with index of nilpotence \(m\) and invariants \(e\) and \(k\)
and realises the given type \(\varkappa\).
Every principalisation type \(\varkappa\) is characterised by
a lower case section letter \cite{Ne}
and an additional numerical identifier \cite{Ma3}.

\end{example}

\begin{table}[ht]
\caption{Second \(3\)-class groups with invariant \(1\le\nu\le 2\) for \(D>0\)}
\label{tab:RealLow}
\begin{center}
\begin{tabular}{|lcc|rrcr|rrrc|r|r|}
\hline
 type  & \(\varkappa\) & types of \(L_i\)               & \(u\) & \(v\) & \(\mathrm{Cl}_3(\mathrm{F}_3^1(K))\) & \(w\) & \(e\) & \(m\) &  \(n\) & \(k\) &      min. \(D\) & frequency \\
\hline
 b.10  & \((0043)\)    & \((\alpha\alpha\delta\delta)\) & \(2\) & \(2\) &                        \((9,9,3,3)\) & \(6\) & \(4\) & \(6\) &  \(8\) & \(1\) &    \(710\,652\) &  \(8\) \\
 c.18  & \((0313)\)    & \((\alpha\delta\delta\delta)\) & \(2\) & \(1\) &                          \((9,3,3)\) & \(4\) & \(3\) & \(5\) &  \(6\) & \(0\) &    \(534\,824\) & \(29\) \\
 c.21  & \((0231)\)    & \((\alpha\delta\delta\delta)\) & \(2\) & \(1\) &                          \((9,3,3)\) & \(4\) & \(3\) & \(5\) &  \(6\) & \(0\) &    \(540\,365\) & \(25\) \\
 d.19  & \((4043)\)    & \((\delta\alpha\delta\delta)\) & \(2\) & \(2\) &                        \((9,9,3,3)\) & \(6\) & \(4\) & \(6\) &  \(8\) & \(0\) & \(2\,328\,721\) &  \(1\) \\
 d.23  & \((1043)\)    & \((\delta\alpha\delta\delta)\) & \(2\) & \(2\) &                        \((9,9,3,3)\) & \(6\) & \(4\) & \(6\) &  \(8\) & \(0\) & \(1\,535\,117\) &  \(1\) \\
\hline
 c.21\(\uparrow\)  & \((0231)\)    & \((\alpha\delta\delta\delta)\) & \(3\) & \(1\) &                         \((27,9,3)\) & \(6\) & \(3\) & \(7\) &  \(8\) & \(0\) & \(1\,001\,957\) &  \(2\) \\
 d.25* & \((0143)\)    & \((\alpha\delta\delta\delta)\) & \(3\) & \(2\) &                       \((27,9,9,3)\) & \(8\) & \(5\) & \(7\) & \(10\) & \(0\) & \(8\,491\,713\) &  \(1\) \\
\hline
\multicolumn{12}{|r|}{total:}																																																				 	                                    & \(67\) \\
\hline
\end{tabular}
\end{center}
\end{table}

We start with the \(3\)-exponents \(u,v,w\) of the \(3\)-class numbers
\(\mathrm{h}_3(L_1)=3^u\), \(\mathrm{h}_3(L_2)=3^v\),
and \(\mathrm{h}_3(\mathrm{F}_3^1(K))=3^w\),
which have been computed together with the class group structures
as experimental data by means of PARI/GP \cite{PARI},
and calculate the index of nilpotence \(m\),
the \(3\)-exponent \(n\) of the group order \(\lvert G\rvert=3^n\),
and the invariants \(e\) and \(k\)
of the second \(3\)-class group \(G\) of \(K\)
by the formulas of theorem \ref{t:RealLowInv}:\\
\(m=2u+k+1\), \(n=2u+2v+k-1\), \(e=2v\), \(k=w-2u-2v+3\),
if \(T=(\alpha,\alpha)\) for the type b,\\
\(m=2u+1\), \(n=2u+2v\), \(e=2v+1\), \(k=w-2u-2v+2=0\),
if \(T=(\alpha,\delta)\) for types c and d*,\\
\(m=2u+2\), \(n=2u+2v\), \(e=2v\), \(k=w-2u-2v+2=0\),
if \(T=(\delta,\alpha)\) for the type d.

Each of these principalisation types,
which were completely unknown,
can occur with different indices of nilpotence \(m\),
the types of sections b,d,d* even with different values of the invariant \(e\).

Generally, in the representation of all metabelian \(3\)-groups \(G\)
with \(G/\gamma_2(G)\) of type \((3,3)\) as vertices in a directed tree
with root \(\mathrm{C}(3)\times\mathrm{C}(3)\) \cite[p.181 ff]{Ne},
the groups with principalisation types c and d*
are represented by \textit{capable} vertices on infinite \textit{main lines},
but all the other groups by \textit{terminal} vertices \cite{EkLg,DEF,EkFs}.
A criterion for separating types d and d* is given in \cite[Th.3.4]{Ma3}.


\section{Second \(p\)-class groups of quadratic fields of type \((p,p)\) with \(p\ge 5\)}
\label{s:InvThmQnt}

The increasing number of possible values of the invariant \(k\)
of metabelian \(p\)-groups \(G\) which are
vertices on the coclass \(1\) graphs \(\mathcal{G}(p,1)\)
for primes \(p\ge 5\) \cite{EkLg,DEF} 
makes it difficult to formulate useful inverse theorems.

For \(p=3\) with only two possibilities \(k\in\lbrace 0,1\rbrace\),
we had to compute the not quite easy \(3\)-class number of
the first Hilbert \(3\)-class field \(\mathrm{F}_3^1(K)\)
of a quadratic base field \(K=\mathbb{Q}(\sqrt{D})\)
with \(3\)-class group of type \((3,3)\),
that is a number field of absolute degree \(18\),
to determine the structure of
the second \(3\)-class group \(G=\mathrm{Gal}(\mathrm{F}_3^2(K)\vert K)\)
in section \ref{s:InvThmCub}.

On principle, the same method can also be applied to \(p\ge 5\).
However, the first Hilbert \(p\)-class field \(\mathrm{F}_p^1(K)\)
of a quadratic field \(K\) with \(p\)-class group of type \((p,p)\)
is a number field of absolute degree \(2p^2\ge 50\) in this case.

\begin{theorem}
\label{t:MaxInvQnt}

Let \(p\ge 5\) be an odd prime and
\(K\) a real quadratic field
with \(p\)-class group of type \((p,p)\).
Suppose that at least \(p\)
of the \(p+1\) totally real non-Galois subfields \(L_1,\ldots,L_{p+1}\) of absolute degree \(p\)
of the absolutely dihedral unramified field extensions \(N_1,\ldots,N_{p+1}\) of relative degree \(p\) of \(K\)
are of type \(\alpha\), say \(L_2,\ldots,L_{p+1}\)
with \(p\)-class numbers \(\mathrm{h}_p(L_2)=\ldots=\mathrm{h}_p(L_{p+1})=p\).
Assume that the remaining field \(L_1\) of absolute degree \(p\)
has the \(p\)-class number \(\mathrm{h}_p(L_1)=p^u\) with exponent \(u\ge 1\).
If the second \(p\)-class group
\(G=\mathrm{Gal}(\mathrm{F}_p^2(K)\vert K)\) of \(K\)
is of coclass \(\mathrm{cc}(G)=1\) with \(n=m\) and \(e=2\),
then the invariants \(m\) and \(k\) are given by
\[
\begin{cases}
m=2u+k+1\ge k+3,\ k\ge 0,&\text{ if }L_1\text{ is of type }\alpha,\\
m=2u+2\ge 4,\ k=0,&\text{ if }L_1\text{ is of type }\delta.\\
\end{cases}
\]
In the first case, the invariant \(k=w-2u+1\) is determined
by the \(p\)-class number \(\mathrm{h}_p(\mathrm{F}_p^1(K))=p^{w}\)
of the first Hilbert \(p\)-class field of \(K\),
and \(k\ge 1\) enforces \(u\ge 2\).

\end{theorem}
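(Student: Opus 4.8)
The plan is to obtain $m$ and $k$ by inverting Theorem~\ref{t:QdrMaxDir}, in complete analogy with the proof of Theorem~\ref{t:QdrMaxInv} for $p=3$; the only structural novelty for $p\ge 5$ is that the invariant $k$ of a coclass-$1$ group need no longer lie in $\{0,1\}$, so its exact value in the type-$\alpha$ case is recovered only after the $p$-class number of $\mathrm{F}_p^1(K)$ has been computed.

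First I would record that, as $G$ is of coclass $\mathrm{cc}(G)=1$, the equalities $n=m$ and $e=2$ hold automatically, and all hypotheses of Theorems~\ref{t:MaxCfg}, \ref{t:MaxDir} and \ref{t:QdrMaxDir} are satisfied. By Theorem~\ref{t:QdrMaxDir}(2) the base field $K$ is real quadratic and the subfields $L_2,\ldots,L_{p+1}$ are necessarily of type $\alpha$ with $\mathrm{h}_p(L_i)=p$, so the assumptions on these $p$ fields are consistent and indeed forced. For the remaining field $L_1$ I would invoke Theorem~\ref{t:QdrMaxDir}(3)--(4): if $L_1$ is of type $\alpha$, then $\mathrm{h}_p(L_1)=p^{(m-k-1)/2}$; if $L_1$ is of type $\delta$, then $k=0$ and $\mathrm{h}_p(L_1)=p^{(m-2)/2}$ (the case ``$L_1$ of type $\delta$ with $k\ge 1$'' being excluded inside the proof of Theorem~\ref{t:QdrMaxDir}(3) by means of \cite[Th.~2.5]{Ma3}). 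Comparing these exponents with $\mathrm{h}_p(L_1)=p^u$ yields $2u=m-k-1$, that is $m=2u+k+1$, in the type-$\alpha$ case, and $2u=m-2$, that is $m=2u+2$ with $k=0$, in the type-$\delta$ case.

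The lower bounds then follow at once. Since $u\ge 1$ and $k\ge 0$, the type-$\alpha$ case gives $m=2u+k+1\ge k+3$, and the type-$\delta$ case gives $m=2u+2\ge 4$; moreover Miech's restriction $k\le m-4$, valid as soon as $k\ge 1$ and recalled in Theorem~\ref{t:MaxCfg}, combined with $m=2u+k+1$, forces $2u\ge 3$ and hence $u\ge 2$ whenever $k\ge 1$.

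It remains to determine $k$ in the type-$\alpha$ case from field-arithmetic data. Using the isomorphism $\gamma_2(G)=\mathrm{Gal}(\mathrm{F}_p^2(K)\vert\mathrm{F}_p^1(K))\simeq\mathrm{Cl}_p(\mathrm{F}_p^1(K))$ from the Introduction, together with $G/\gamma_2(G)$ of type $(p,p)$, one gets $\mathrm{h}_p(\mathrm{F}_p^1(K))=\lvert\gamma_2(G)\rvert=\lvert G\rvert/p^2=p^{m-2}$; writing this as $p^w$ gives $w=m-2=2u+k-1$, so $k=w-2u+1$, which again re-confirms $u\ge 2$ when $k\ge 1$. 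No step is genuinely difficult here: the argument is bookkeeping built on Theorems~\ref{t:MaxDir} and \ref{t:QdrMaxDir}, and the only external input is \cite[Th.~2.5]{Ma3}, used solely to discard the possibility $k\ge 1$ when $L_1$ is of type $\delta$.
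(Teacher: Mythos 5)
Your proposal is correct and follows essentially the same route as the paper: invert Theorem~\ref{t:QdrMaxDir} to read off \(m=2u+k+1\) (type \(\alpha\)) respectively \(m=2u+2\), \(k=0\) (type \(\delta\)), and then determine \(k\) from \(p^w=\mathrm{h}_p(\mathrm{F}_p^1(K))=\lvert\gamma_2(G)\rvert=p^{m-2}\). Your explicit derivation of ``\(k\ge 1\) enforces \(u\ge 2\)'' from Miech's bound \(k\le m-4\) is a small welcome addition, as the paper states this implication without spelling out the argument.
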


\begin{proof}

Since it is unknown,
which values the invariant \(\nu\) can take
for a second \(p\)-class group \(G\) of coclass \(\mathrm{cc}(G)\ge 2\)
in the case \(p\ge 5\),
the assumption \(p\le\nu\le p+1\) does not imply
that \(G\) is of coclass \(\mathrm{cc}(G)=1\).
Thus we have to assume explicitly that
\(G\) is of coclass \(\mathrm{cc}(G)=1\)
in the present proof.

By theorem \ref{t:QdrMaxDir}, we have\\
\(p^u=\mathrm{h}_p(L_1)=p^{\frac{m-k-1}{2}}\) with \(m-k\equiv 1(2)\),
for \(L_1\) of type \(\alpha\), \(k\ge 0\), but\\
\(p^u=\mathrm{h}_p(L_1)=p^{\frac{m-2}{2}}\) with \(m\equiv 0(2)\),
for \(L_1\) of type \(\delta\), \(k=0\).

For the group \(G\), we therefore obtain\\
an index of nilpotence \(m=2u+k+1\ge k+3\) with parity depending on \(k\),
if \(L_1\) is of type \(\alpha\) and \(k\ge 0\), and\\
an even index of nilpotence \(m=2u+2\ge 4\),
if \(L_1\) is of type \(\delta\) and \(k=0\).\\

For \(L_1\) of type \(\alpha\),
the invariant \(k\) is finally determined by
\(p^w=\mathrm{h}_p(\mathrm{F}_p^1(K))=\lvert\gamma_2(G)\rvert=p^{m-2}=p^{2u+k-1}\),
and thus \(k=w-2u+1\).
\end{proof}


\section{Second \(2\)-class groups of arbitrary fields of type \((2,2)\)}
\label{s:InvThmQdr}

Different from \(p\ge 3\), the value \(k=0\) is uniquely determined for \(p=2\).
Although the theory of dihedral fields cannot be applied to \(p=2\),
we get a useful criterion by inversion of the initial theorem \ref{t:MaxDir},
since the \(2\)-class numbers \(\mathrm{h}_2(N_i)\)
of the three unramified relatively quadratic extension fields \(N_1,\ldots,N_3\)
of an arbitrary base field \(K\) with \(2\)-class group of type \((2,2)\)
determine the order \(\lvert G\rvert=2^m\) and class \(\mathrm{cl}(G)=m-1\)
of the second \(2\)-class group
\(G=\mathrm{Gal}(\mathrm{F}_2^2(K)\vert K)\) of \(K\),
which is always of maximal class with \(m\ge 2\), that is,
a vertex on the coclass \(1\) graph \(\mathcal{G}(2,1)\)
\cite{EkLg,DEF,EkFs}. 

\begin{theorem}
\label{t:MaxInv}

Let \(K\) be an arbitrary base field
with elementary abelian bicyclic \(2\)-class group
and \(N_1,\ldots,N_3\) its
three unramified relatively quadratic extension fields.

\begin{enumerate}

\item
If the \(2\)-class numbers of \(N_1,\ldots,N_3\)
are given by \(\mathrm{h}_2(N_1)=\mathrm{h}_2(N_2)=\mathrm{h}_2(N_3)=2\),
then the second \(2\)-class group
\(G=\mathrm{Gal}(\mathrm{F}_2^2(K)\vert K)\) of \(K\)
is abelian of type \((2,2)\)
and thus of order \(\lvert G\rvert=2^m\) with \(m=2\).

\item
If at least two of the
three unramified relatively quadratic extension fields of \(K\),
say \(N_2,N_3\),
have \(2\)-class numbers \(\mathrm{h}_2(N_2)=\mathrm{h}_2(N_3)=2^2\)
and if the remaining relatively quadratic field \(N_1\)
has \(2\)-class number \(\mathrm{h}_2(N_1)=2^w\) with exponent \(w\ge 2\),
then the second \(2\)-class group
\(G=\mathrm{Gal}(\mathrm{F}_2^2(K)\vert K)\) of \(K\)
is metabelian of coclass \(\mathrm{cc}(G)=1\) and
of order \(\lvert G\rvert=2^m\) with exponent \(m=w+1\ge 3\).\\
Further, the \(2\)-class number of the first Hilbert \(2\)-class field of \(K\)
is given by \(\mathrm{h}_2(\mathrm{F}_2^1(K))=2^{w-1}\).

\end{enumerate}

\end{theorem}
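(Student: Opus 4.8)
The plan is to obtain Theorem~\ref{t:MaxInv} as the inversion of the direct Theorem~\ref{t:MaxDir} in the special case $p=2$, once we have secured that the hypotheses of the latter are automatically met here. The decisive structural input is the classical fact, due to O.~Taussky and contained in Blackburn's classification of the $2$-groups of maximal class \cite{Bl} (the dihedral, quaternion and semidihedral families), that a \emph{non-abelian} $2$-group $G$ with $G/\gamma_2(G)$ of type $(2,2)$ is necessarily of maximal class. Since $G=\mathrm{Gal}(\mathrm{F}_2^2(K)\vert K)$ satisfies $G/\gamma_2(G)\simeq\mathrm{Cl}_2(K)$ of type $(2,2)$ by Artin's reciprocity law, $G$ is therefore either abelian of type $(2,2)$ or metabelian of coclass $\mathrm{cc}(G)=1$, i.e.\ a vertex on $\mathcal{G}(2,1)$ \cite{EkLg,EkFs}; in particular the hypotheses of Theorem~\ref{t:MaxDir} hold, and the bound on the invariant $k$ recorded in Theorem~\ref{t:MaxCfg} together with Miech's restriction \cite{Mi} forces the single admissible value $k=0$ when $p=2$.

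Granting this, the proof becomes a direct reading of Theorem~\ref{t:MaxDir} with $p=2$, $k=0$: writing $\lvert G\rvert=2^m$, the three unramified relatively quadratic extensions satisfy $\mathrm{h}_2(N_i)=2$ for all $i$ if $m=2$, and $\mathrm{h}_2(N_2)=\mathrm{h}_2(N_3)=2^2$, $\mathrm{h}_2(N_1)=2^{m-1}$ if $m\ge 3$, after the labelling of the $N_i$ prescribed by the choice of generators of $G$ in Theorem~\ref{t:MaxCfg}. For part~(1), the hypothesis $\mathrm{h}_2(N_1)=\mathrm{h}_2(N_2)=\mathrm{h}_2(N_3)=2$ is incompatible with the case $m\ge 3$, which would force $\mathrm{h}_2(N_2)=2^2$; hence $m=2$ and $G$ is abelian of type $(2,2)$. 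For part~(2), the hypothesis $\mathrm{h}_2(N_2)=\mathrm{h}_2(N_3)=2^2$ shows $G$ is non-abelian, hence of maximal class with $m\ge 3$; matching the multiset of class numbers $\{2^w,2^2,2^2\}$ against $\{2^{m-1},2^2,2^2\}$ yields $2^w=2^{m-1}$, that is $m=w+1\ge 3$ (and $w\ge 2$ is automatic, the distinguished extension being any one of the three when $w=2$).

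Finally, $\mathrm{h}_2(\mathrm{F}_2^1(K))$ is computed from the Artin isomorphism $\mathrm{Cl}_2(\mathrm{F}_2^1(K))\simeq\mathrm{Gal}(\mathrm{F}_2^2(K)\vert\mathrm{F}_2^1(K))=\gamma_2(G)$, whence $\mathrm{h}_2(\mathrm{F}_2^1(K))=\lvert\gamma_2(G)\rvert=\lvert G\rvert/\lvert G/\gamma_2(G)\rvert=2^m/2^2=2^{m-2}=2^{w-1}$.

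The only genuinely non-formal point is the first paragraph. Because the arithmetic of dihedral fields exploited throughout Section~\ref{s:Qdr} is not available for $p=2$, the coclass of $G$ cannot be constrained by the principalisation-theoretic arguments used there; instead we must invoke the purely group-theoretic Taussky criterion to place $G$ within the scope of Theorem~\ref{t:MaxDir}. Once that is in place, the remainder is a translation of Theorem~\ref{t:MaxDir} and a bookkeeping of group orders, and I do not anticipate any further obstacle.
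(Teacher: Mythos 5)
Your proof is correct and takes essentially the same route as the paper: both invert Theorem \ref{t:MaxDir} for \(p=2\), where \(k=0\) is forced and the Taussky/Blackburn fact that a non-abelian \(2\)-group with abelianisation of type \((2,2)\) is of maximal class guarantees the hypotheses of that theorem hold automatically. The only difference is presentational --- the paper states this maximal-class fact in the preamble of Section \ref{s:InvThmQdr} and keeps the proof itself to a terse reading-off of Theorem \ref{t:MaxDir}, whereas you make it the explicit first step and spell out the matching of class-number multisets.
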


\begin{proof}

By theorem \ref{t:MaxDir}, the following statements
for the index of nilpotence \(m\) of the \(2\)-group \(G\) hold.

\begin{enumerate}

\item
\(G\) is abelian with \(m=2\), if and only if
\(\mathrm{h}_2(N_1)=\mathrm{h}_2(N_2)=\mathrm{h}_2(N_3)=2\).

\item
\(G\) is metabelian of coclass \(\mathrm{cc}(G)=1\) with \(m\ge 3\), if and only if
\(\mathrm{h}_2(N_2)=\mathrm{h}_2(N_3)=2^2\) and \(\mathrm{h}_2(N_1)=2^{m-1}\).
Provided that \(\mathrm{h}_2(N_1)=2^w\) with \(w\ge 2\),
it follows that \(\lvert G\rvert=2^m\) with \(m=w+1\ge 3\) and
\(\mathrm{h}_2(\mathrm{F}_2^1(K))=\lvert\gamma_2(G)\rvert=2^{m-2}\)
with \(m-2=w-1\ge 1\).
\end{enumerate}
\end{proof}


\section{Computational results for complex quadratic fields of type \((2,2)\)}
\label{s:NumRstQdr}


\begin{example}
\label{ex:InvCmpQdrAbl}

For primes
\(p\equiv 3\pmod{8}\),
\(q\equiv 3\pmod{4}\)
with
\(\left(\frac{p}{q}\right)=-1\)
let
\(K=\mathbb{Q}(\sqrt{D})\)
be the complex quadratic base field
with discriminant \(D=-4pq\).
According to H. Kisilevsky \cite[p.277,(ii)]{Ki}
the \(2\)-class group of \(K\) is elementary abelian bicyclic of type \((2,2)\)
and coincides with the second \(2\)-class group
\(G=\mathrm{Gal}(\mathrm{F}_2^2(K)|K)\),
since this is a case of a single-stage tower
with \(\mathrm{F}_2^2(K)=\mathrm{F}_2^1(K)\).

\(K\) has three unramified quadratic relative extensions
\(N_1=K(\sqrt{-4})\), \(N_2=K(\sqrt{p})\), \(N_3=K(\sqrt{q})\),
sharing the same discriminant \(D^2\),
and a first Hilbert  \(2\)-class field
\(\mathrm{F}_2^1(K)=N_1\cdot N_2\)
with discriminant \(D^4\).
From the order \(2\) of the three cyclic \(2\)-class groups \(\mathrm{Cl}_2(N_i)\) with \(1\le i\le 3\)
we get the order \(\lvert G\rvert=2^2\) of \(G\), by theorem \ref{t:MaxInv}.
Table \ref{tab:InvCmpQdrAbl} shows the begin of the series with \(p=3\)
and \(q=7\).
The principalisation type was introduced in \cite[2.5]{Ma3}.

\end{example}

\renewcommand{\arraystretch}{1.2}
\begin{table}[ht]
\caption{Elementary abelian bicyclic second \(2\)-class group}
\label{tab:InvCmpQdrAbl}
\begin{center}
\begin{tabular}{|rcr|ccl|rrr|r|}
\hline
 \(D=-4pq\) & \(p\) &   \(q\) & \(m\) &                 \(G\)                & type & \(\mathrm{Cl}_2(N_1)\) & \(\mathrm{Cl}_2(N_2)\) & \(\mathrm{Cl}_2(N_3)\) & \(\mathrm{Cl}_2(\mathrm{F}_2^1(K))\) \\
\hline
    \(-84\) & \(3\) &   \(7\) & \(2\) &\(\mathrm{C}(2)\times \mathrm{C}(2)\) & a.1  &                \((2)\) &              \((2)\) &              \((2)\) &                              \(1\) \\
\hline
\end{tabular}
\end{center}
\end{table}


\begin{example}
\label{ex:InvCmpQdrDih}

For primes
\(p\equiv 3\pmod{8}\),
\(q\equiv 1\pmod{8}\)
with
\(\left(\frac{q}{p}\right)=-1\)
let
\(K=\mathbb{Q}(\sqrt{D})\)
be the complex quadratic base field
with discriminant \(D=-8pq\)
and \(\varepsilon\) the fundamental unit
of the real quadratic field \(k=\mathbb{Q}(\sqrt{8q})\)
with discriminant \(8q\).
Then, according to Kisilevsky \cite[p.278,(vi)(b)]{Ki},
the \(2\)-class group of \(K\) is of type \((2,2)\)
and the second \(2\)-class group
\(G=\mathrm{Gal}(\mathrm{F}_2^2(K)|K)\)
is isomorphic either to a dihedral group \(D(2^m)\) with \(m\ge 3\),
if \(\mathrm{Norm}_{k|\mathbb{Q}}(\varepsilon)=+1\),
or to a generalised quaternion group \(Q(2^m)\) with \(m\ge 4\),
if \(\mathrm{Norm}_{k|\mathbb{Q}}(\varepsilon)=-1\).

\(K\) has three unramified quadratic relative extensions
\(N_1=K(\sqrt{-p})\), \(N_2=K(\sqrt{q})\), \(N_3=K(\sqrt{8})\),
sharing the same discriminant \(D^2\),
and a first Hilbert \(2\)-class field
\(\mathrm{F}_2^1(K)=N_1\cdot N_2\)
with discriminant \(D^4\).
By theorem \ref{t:MaxInv},
the order \(2^{m-1}\) of the cyclic \(2\)-class group \(\mathrm{Cl}_2(N_1)\)
determines the order \(\lvert G\rvert=2^m\) of \(G\).
Table \ref{tab:InvCmpQdrDih} shows the begin of the series with \(p=3\)
and increasing values of the parameter \(q\),
using the notation of \cite[Th.2.6]{Ma3}.

\end{example}

\begin{table}[ht]
\caption{Dihedral and quaternion groups of increasing order as second \(2\)-class groups}
\label{tab:InvCmpQdrDih}
\begin{center}
\begin{tabular}{|rcr|ccl|rrr|r|}
\hline
 \(D=-8pq\) & \(p\) &   \(q\) & \(m\) &                          \(G\) & type & \(\mathrm{Cl}_2(N_1)\) & \(\mathrm{Cl}_2(N_2)\) & \(\mathrm{Cl}_2(N_3)\) & \(\mathrm{Cl}_2(\mathrm{F}_2^1(K))\) \\
\hline
   \(-408\) & \(3\) &  \(17\) & \(3\) &  \(D(8)\simeq G^{(3)}_0(0,0)\) & d.8  &                \((4)\) &              \((2,2)\) &              \((2,2)\) &                              \((2)\) \\
  \(-6168\) & \(3\) & \(257\) & \(4\) & \(D(16)\simeq G^{(4)}_0(0,0)\) & d.8\(\uparrow\)  &                \((8)\) &              \((2,2)\) &              \((2,2)\) &                              \((4)\) \\
 \(-29208\) & \(3\) &\(1217\) & \(5\) & \(D(32)\simeq G^{(5)}_0(0,0)\) & d.8\(\uparrow^2\)  &               \((16)\) &              \((2,2)\) &              \((2,2)\) &                              \((8)\) \\
 \(-609816\) & \(3\) &\(25409\) & \(6\) & \(D(64)\simeq G^{(6)}_0(0,0)\) & d.8\(\uparrow^3\)  &               \((32)\) &              \((2,2)\) &              \((2,2)\) &                              \((16)\) \\
 \(-670872\) & \(3\) &\(27953\) & \(7\) & \(D(128)\simeq G^{(7)}_0(0,0)\) & d.8\(\uparrow^4\)  &               \((64)\) &              \((2,2)\) &              \((2,2)\) &                              \((32)\) \\
\hline
   \(-984\) & \(3\) &  \(41\) & \(4\) & \(Q(16)\simeq G^{(4)}_0(0,1)\) & Q.6  &                \((8)\) &              \((2,2)\) &              \((2,2)\) &                              \((4)\) \\
  \(-2712\) & \(3\) & \(113\) & \(5\) & \(Q(32)\simeq G^{(5)}_0(0,1)\) & Q.6\(\uparrow\)  &               \((16)\) &              \((2,2)\) &              \((2,2)\) &                              \((8)\) \\
\hline
\end{tabular}
\end{center}
\end{table}


\begin{example}
\label{ex:InvCmpQdrSem}

For primes
\(p\equiv 1\pmod{4}\),
\(q\equiv 1\pmod{4}\)
with
\(pq\equiv 5\pmod{8}\)
and
\(\left(\frac{p}{q}\right)=-1\)
let
\(K=\mathbb{Q}(\sqrt{D})\)
be the complex quadratic base field
with discriminant \(D=-4pq\).
According to Kisilevsky \cite[p.277,(i)]{Ki},
the \(2\)-class group of \(K\) is of type \((2,2)\)
and the second \(2\)-class group
\(G=\mathrm{Gal}(\mathrm{F}_2^2(K)|K)\)
is isomorphic to a semidihedral group \(S(2^m)\) with \(m\ge 4\).

\(K\) has three unramified quadratic relative extensions
\(N_1=K(\sqrt{p})\), \(N_2=K(\sqrt{q})\), \(N_3=K(\sqrt{-4})\)
sharing the same discriminant \(D^2\),
and a first Hilbert \(2\)-class field
\(\mathrm{F}_2^1(K)=N_1\cdot N_2\)
with discriminant \(D^4\).
By theorem \ref{t:MaxInv},
the order \(2^{m-1}\) of the cyclic \(2\)-class group \(\mathrm{Cl}_2(N_1)\)
determines the order \(\lvert G\rvert=2^m\) of \(G\).
Table \ref{tab:InvCmpQdrSem} shows the begin of the series with \(p=5\)
and increasing values of the parameter \(q\),
using the notation of \cite[Th.2.6]{Ma3}.

\end{example}

\begin{table}[ht]
\caption{Semidihedral groups of increasing order as second \(2\)-class groups}
\label{tab:InvCmpQdrSem}
\begin{center}
\begin{tabular}{|rcr|ccl|rrr|r|}
\hline
 \(D=-4pq\) & \(p\) &   \(q\) & \(m\) &                           \(G\) & type & \(\mathrm{Cl}_2(N_1)\) & \(\mathrm{Cl}_2(N_2)\) & \(\mathrm{Cl}_2(N_3)\) & \(\mathrm{Cl}_2(\mathrm{F}_2^1(K))\) \\
\hline
   \(-340\) & \(5\) &  \(17\) & \(4\) &  \(S(16)\simeq G^{(4)}_0(1,0)\) & S.4  &                \((8)\) &              \((2,2)\) &              \((2,2)\) &                              \((4)\) \\
  \(-2260\) & \(5\) & \(113\) & \(5\) &  \(S(32)\simeq G^{(5)}_0(1,0)\) & S.4\(\uparrow\)  &               \((16)\) &              \((2,2)\) &              \((2,2)\) &                              \((8)\) \\
  \(-5140\) & \(5\) & \(257\) & \(6\) &  \(S(64)\simeq G^{(6)}_0(1,0)\) & S.4\(\uparrow^2\)  &               \((32)\) &              \((2,2)\) &              \((2,2)\) &                             \((16)\) \\
 \(-17140\) & \(5\) & \(857\) & \(7\) & \(S(128)\simeq G^{(7)}_0(1,0)\) & S.4\(\uparrow^3\)  &               \((64)\) &              \((2,2)\) &              \((2,2)\) &                             \((32)\) \\
 \(-165460\) & \(5\) &\(8273\) & \(8\) & \(S(256)\simeq G^{(8)}_0(1,0)\) & S.4\(\uparrow^4\)  &              \((128)\) &              \((2,2)\) &              \((2,2)\) &                             \((64)\) \\
\hline
\end{tabular}
\end{center}
\end{table}


\begin{example}
\label{ex:InvCmpQdrQtr}

For primes
\(p\equiv 3\pmod{8}\),
\(q\equiv 5\pmod{8}\)
let
\(K=\mathbb{Q}(\sqrt{D})\)
be the complex quadratic base field
with discriminant \(D=-8pq\).
Then,
according to Kisilevsky \cite[p.278,(vi)(a,b)]{Ki},
the \(2\)-class group of \(K\) is of type \((2,2)\)
and the second \(2\)-class group
\(G=\mathrm{Gal}(\mathrm{F}_2^2(K)|K)\)
is isomorphic either to the quaternion group \(Q(2^3)\) of order eight, if \(\left(\frac{q}{p}\right)=-1\),
or to a generalised quaternion group \(Q(2^m)\) with \(m\ge 4\), if \(\left(\frac{q}{p}\right)=+1\).

\(K\) has three unramified quadratic relative extensions
\(N_1=K(\sqrt{8})\), \(N_2=K(\sqrt{q})\), \(N_3=K(\sqrt{-p})\)
sharing the same discriminant \(D^2\),
and a first Hilbert \(2\)-class field
\(\mathrm{F}_2^1(K)=N_1\cdot N_2\)
with discriminant \(D^4\).
The order \(2^{m-1}\) of the cyclic \(2\)-class group \(\mathrm{Cl}_2(N_1)\)
determines the order \(\lvert G\rvert=2^m\) of \(G\), by theorem \ref{t:MaxInv}.
Table \ref{tab:InvCmpQdrQtr} shows the begin of the series with \(p=3\)
and increasing values of the parameter \(q\),
using the notation of \cite[Th.2.6]{Ma3}.

\end{example}

\newpage

\begin{table}[ht]
\caption{Quaternion groups of increasing order as second \(2\)-class groups}
\label{tab:InvCmpQdrQtr}
\begin{center}
\begin{tabular}{|rcr|ccl|rrr|r|}
\hline
 \(D=-8pq\) & \(p\) &   \(q\) & \(m\) &                           \(G\) & type & \(\mathrm{Cl}_2(N_1)\) & \(\mathrm{Cl}_2(N_2)\) & \(\mathrm{Cl}_2(N_3)\) & \(\mathrm{Cl}_2(\mathrm{F}_2^1(K))\) \\
\hline
   \(-120\) & \(3\) &   \(5\) & \(3\) &   \(Q(8)\simeq G^{(3)}_0(0,1)\) & Q.5  &                \((4)\) &                \((4)\) &                \((4)\) &                              \((2)\) \\
\hline
   \(-312\) & \(3\) &  \(13\) & \(4\) &  \(Q(16)\simeq G^{(4)}_0(0,1)\) & Q.6  &                \((8)\) &              \((2,2)\) &              \((2,2)\) &                              \((4)\) \\
   \(-888\) & \(3\) &  \(37\) & \(5\) &  \(Q(32)\simeq G^{(5)}_0(0,1)\) & Q.6\(\uparrow\)  &               \((16)\) &              \((2,2)\) &              \((2,2)\) &                              \((8)\) \\
  \(-3768\) & \(3\) & \(157\) & \(6\) &  \(Q(64)\simeq G^{(6)}_0(0,1)\) & Q.6\(\uparrow^2\)  &               \((32)\) &              \((2,2)\) &              \((2,2)\) &                             \((16)\) \\
  \(-8952\) & \(3\) & \(373\) & \(7\) & \(Q(128)\simeq G^{(7)}_0(0,1)\) & Q.6\(\uparrow^3\)  &               \((64)\) &              \((2,2)\) &              \((2,2)\) &                             \((32)\) \\
 \(-40632\) & \(3\) &\(1693\) & \(8\) & \(Q(256)\simeq G^{(8)}_0(0,1)\) & Q.6\(\uparrow^4\)  &              \((128)\) &              \((2,2)\) &              \((2,2)\) &                             \((64)\) \\
\hline
\end{tabular}
\end{center}
\end{table}


\section{Final remarks}
\label{s:Acknowledgements}

All numerical results of the sections \ref{s:NumRstCub} and \ref{s:NumRstQdr}
have been calculated with the aid of programs which we have developed
for the number theoretic computer algebra system PARI/GP, version 2.3.4 (2008)
\cite{Be,PARI}.
Details of our algorithms are presented in the related paper \cite{Ma4}.

The examples \ref{ex:InvCmpQdrDih}, \ref{ex:InvCmpQdrSem}, and \ref{ex:InvCmpQdrQtr}
suggest the conjecture that
dihedral, semidihedral, and quaternion groups of arbitrarily high order are to be expected
as second \(2\)-class groups \(G=\mathrm{Gal}(\mathrm{F}_2^2(K)\vert K)\)
in the further continuation of the investigated series of complex quadratic base fields \(K\).



\end{document}